\newcommand{\p}{\mathbb{P}}
\newcommand{\E}{\mathbb{E}}
\numberwithin{equation}{section}
\newtheorem{theorem}{Theorem}\numberwithin{theorem}{section}
\newtheorem{lemma}[theorem]{Lemma}
\newtheorem{assu}{Assumption}
\newtheorem{rema}[theorem]{Remark}
\def\E{\mathbb{E}}
\def\0{{\bf 0}}
\renewcommand{\E}{\mathbb E \,}
\newcommand{\Cov}{{\rm Cov}}
\newcommand{\Var}{{\rm Var}}
\def\beqn{\begin{equation}}
	\def\eeqn{\end{equation}}
\def\be{\begin{equation}}
	\def\ee{\end{equation}}
\def\qed{\hfill\hbox{${\vcenter{\vbox{
					\hrule height 0.4pt\hbox{\vrule width 0.4pt height 6pt
						\kern5pt\vrule width 0.4pt}\hrule height 0.4pt}}}$}}
\titleformat*{\section}{\normalfont\large\bfseries}
\titleformat*{\subsection}{\normalfont\bfseries}
\date{\vspace{-0.95cm}}
\begin{document}

	\title{Lower bounds for variances of Poisson functionals}
	
	\author{Matthias Schulte\footnotemark[1] \ and \ Vanessa Trapp\footnotemark[2]}
	
	\date{\today}
	\maketitle
	
	\footnotetext[1]{Hamburg University of Technology, Germany,
		matthias.schulte@tuhh.de}
	\footnotetext[2]{Hamburg University of Technology, Germany,
		vanessa.trapp@tuhh.de}

	\begin{abstract}
		Lower bounds for variances are often needed to derive central limit theorems. In this paper, we establish a lower bound for the variance of Poisson functionals that uses the difference operator of Malliavin calculus. 
		Poisson functionals, i.e.\ random variables that depend on a Poisson process, are frequently studied in stochastic geometry. We apply our lower variance bound to statistics of spatial random graphs, the $L^p$ surface area of random polytopes and the volume of excursion sets of Poisson shot noise processes. Thereby we do not only bound variances from below but also show positive definiteness of asymptotic covariance matrices and provide associated results on the multivariate normal approximation.
		\\\;\\
		\noindent
		\textbf{Keywords:} lower variance bounds, Poisson processes, covariance matrices, multivariate normal approximation, random polytopes, $L^p$ surface area, Poisson shot noise processes, spatial random graphs, Malliavin calculus\\
		\textbf{MSC 2020:} Primary: 60D05, Secondary: 60F05
	\end{abstract}

	\section{Introduction and main result}\label{sec:Intro}

As the variance quantifies the fluctuations of a random variable around its mean, upper bounds for variances are an important topic of probability theory. A main motivation to study lower bounds comes from the problem to establish central limit theorems. Here, after applying quantitative bounds for the normal approximation to standardised random variables, one has to divide by powers of the variance, whence it is essential to have lower bounds for the variance. In this paper, we derive such lower bounds for random variables that only depend on an underlying Poisson process. These so-called Poisson functionals play a crucial role in stochastic geometry but also appear in other branches of probability theory.

		Let $\eta$ be a Poisson process on a measurable space $(\mathbb{X},\mathcal{X})$ with a $\sigma$-finite intensity measure $\lambda$. The underlying probability space is denoted by $(\Omega,\mathscr{F},\mathbb{P})$. Let $\mathbf{N}$ denote the set of all $\sigma$-finite counting measures equipped with the $\sigma$-field generated by the mappings $\nu\mapsto\nu(B)$ for $B\in\mathcal{X}$. The Poisson process can be seen as a random element in $\mathbf{N}$. A detailed introduction to Poisson processes can be found in e.g.\ \cite{LP17}. A Poisson functional $F$ is a real-valued measurable function on $\Omega$ that can be written as $F=f(\eta)$, where $f$ is a real-valued measurable function on $\mathbf{N}$ and is called representative. For simplicity and by a slight abuse of notation, we denote a Poisson functional in the following by $F=F(\eta)$. If $F$ is square-integrable, we write $F\in L^2_\eta$.

Throughout this paper we are mostly interested in the asymptotic behaviour of Poisson functionals in two frameworks, namely increasing intensity or increasing observation window. More precisely, we study for $s\to\infty$ a family of Poisson functionals $F_s$, $s\geq1$, where $F_s$ is either a Poisson functional on a homogeneous Poisson process with intensity $s$ or a functional that considers only points of a fixed Poisson process in an observation window that extends to the full space for $s\to\infty$.

Central limit theorems for some Poisson functionals were established, for example, in \cite{AB93, BX06,BY05,CSY13,L19,LPY20,LSY19,LPS16,P05,PW08,PY01,R05,SY21}. Since the proofs require lower variance bounds as discussed above, these papers also study the asymptotic behaviour of the variance. Often convergence of the variance to a non-degenerate (i.e.\ non-zero) asymptotic variance constant is shown. Investigating the behaviour of the variance usually requires a lot of effort. This is the reason why we want to treat the problem of lower variance bounds as a separate issue from establishing central limit theorems in this paper. To this end, we provide a lower variance bound, which can be seen as the counterpart to the Poincar\'e inequality.

As mentioned above, a common problem is to show that the asymptotic variance constant is positive. But even if one has an explicit representation for the latter, it can be hard to show positivity because positive and negative terms could cancel out. Therefore, proving the non-degeneracy of the asymptotic variance can be a different problem than computing the limiting constant of the variance. In this case, it can be helpful to employ lower bounds for variances to deduce positivity of the asymptotic variance constant.

Since the covariance matrix $\Sigma_s\in\mathbb{R}^{m\times m}$ of Poisson functionals $F^{(1)}_s,\hdots, F^{(m)}_s$, $s\geq 1$, satisfies
$$
\Var\bigg[ \sum_{i=1}^m \alpha_i F_s^{(i)} \bigg] = \alpha^T \Sigma_s \alpha
$$
for all $\alpha=(\alpha_1,\hdots,\alpha_m)\in\mathbb{R}^m$, one can use lower bounds for variances to establish positive definiteness of the asymptotic covariance matrix $\Sigma=\lim_{s\to\infty} \Sigma_s$ if it exists. Knowing the positive definiteness of $\Sigma$ is of interest since it ensures that none of the Poisson functionals can be written asymptotically as a linear combination of the others. Furthermore, some bounds for the quantitative multivariate normal approximation (see e.g.\ \cite{SY21}) require the positive definiteness of the covariance matrix of the limiting normal distribution.

In order to present our main result, we need some notation and some further background on Poisson functionals.	For $x\in\mathbb{X}$ the difference operator of a Poisson functional $F=F(\eta)$ is defined by
	\begin{align*}
		D_xF=F(\eta+\delta_x)-F(\eta),
	\end{align*}
	where $\delta_x$ denotes the Dirac measure concentrated at $x$.	In general, the $n$-th iterated difference operator $D^n$ is recursively defined by
	\begin{align*}
	D^n_{x_1,\hdots,x_n}F=D_{x_1}(D^{n-1}_{x_2,\hdots,x_{n}}F)
	\end{align*} 
for $n>1$ and $x_1,\hdots,x_n\in\mathbb{X}$. In particular, for $x,y\in\mathbb{X}$ the iterated, second-order difference operator equals
	\begin{align*}
		D_{x,y}^2F=D_x(D_yF)=F(\eta+\delta_x+\delta_y)-F(\eta+\delta_x)-F(\eta+\delta_y)+F(\eta).
	\end{align*}
	For $F\in L_\eta^2$ define $f_n(x_1,\hdots,x_n)=\frac{1}{n!}\E[D^n_{x_1,\hdots,x_n}F]$ for $x_1,\hdots,x_n\in\mathbb{X}$ and $n\in\mathbb{N}$. Then, $f_n$ is symmetric and square-integrable for all $n\in\mathbb{N}$ and the Fock space representation of $F$ is given by
	\begin{align}
		\label{eq:fock_space}
		\E[F^2]=\E[F]^2+\sum_{n=1}^{\infty}n!\lVert f_n\rVert_n^2,
	\end{align}
	where $\lVert \cdot\rVert_n$ denotes the norm on $L^2(\lambda^n)$ (see, for example, \cite[Theorem 1.1]{LP11} or \cite[Theorem 18.6]{LP17}). Using this representation, one can directly derive
	\begin{align}\label{eqn:Bound_First_Chaos}
		\Var[F]=\sum_{n=1}^{\infty}n!\lVert f_n\rVert_n^2\geq \lVert f_1\rVert_1^2=\int(\E[D_xF])^2\;\mathrm{d}\lambda(x).
	\end{align}
	The problem with this lower variance bound is that the difference operator can in general be positive or negative and, thus, can have expectation zero. To overcome this issue, we provide in this paper a counterpart to the well-known Poincar\'{e} inequality 
	\begin{align}\label{eqn:Poincare}
		\Var[F]\leq \int\E[(D_xF)^2]\;\mathrm{d}\lambda(x)
	\end{align}
	for $F\in L^2_\eta$ (see, for example, \cite[Theorem 18.7]{LP17}). In the following main result we give a condition under which the variance of $F$ can be bounded from below by a constant times the right-hand side of the Poincar\'{e} inequality, whence we can think of it as a reversed Poincar\'e inequality. 
	
	\begin{theorem}
		\label{thm:varbound}
		Let $F\in L_\eta^2$ be a Poisson functional satisfying
		\begin{equation}
			\label{condition}
			\mathbb{E}\left[\int (D_{x,y}^2F)^2\; \mathrm{d}\lambda^2(x,y)\right]\leq\alpha \mathbb{E}\left[\int (D_{x}F)^2 \;\mathrm{d}\lambda(x)\right]<\infty
		\end{equation}
		for some constant $\alpha\geq 0$.
		Then
		\begin{equation}
			\label{prop}
			\mathrm{Var}[F]\geq\frac{4}{(\alpha+2)^2}\mathbb{E}\left[\int (D_{x}F)^2 \;\mathrm{d}\lambda(x)\right].
		\end{equation}
	\end{theorem}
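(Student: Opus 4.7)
My plan is to work through the Fock space representation \eqref{eq:fock_space}. Writing $a_n := n!\lVert f_n\rVert_n^2 \ge 0$, it gives $\mathrm{Var}[F] = \sum_{n\ge 1}a_n$. The crucial first step is to derive the two analogous identities
\[
A := \mathbb{E}\!\left[\int (D_xF)^2\,\mathrm{d}\lambda(x)\right] = \sum_{n\ge 1} n\,a_n, \qquad B := \mathbb{E}\!\left[\int (D_{x,y}^2F)^2\,\mathrm{d}\lambda^2(x,y)\right] = \sum_{n\ge 2} n(n-1)\,a_n.
\]
I would obtain these by applying \eqref{eq:fock_space} to the Poisson functionals $\eta\mapsto D_xF$ and $\eta\mapsto D_{x,y}^2F$, with $x,y$ held fixed. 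Using the commutativity relation $D^n_{y_1,\ldots,y_n}(D_xF) = D^{n+1}_{x,y_1,\ldots,y_n}F$ and its second-order analogue, the $n$-th Fock kernel of $D_xF$ is $(n+1)f_{n+1}(x,\cdot)$ and that of $D_{x,y}^2F$ is $(n+1)(n+2)f_{n+2}(x,y,\cdot)$. Summing the squared-mean-plus-chaos-norms, integrating against $\lambda$ (resp.\ $\lambda^2$), and reindexing yields the displayed formulas; the finiteness in \eqref{condition} ensures convergence and legitimises the Fubini interchanges.

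With these in hand, set $V := \mathrm{Var}[F]$. By the Cauchy--Schwarz inequality together with the identity $\sum_{n\ge 1} n^2 a_n = A+B$,
\[
A^2 = \bigg(\sum_{n\ge 1}\sqrt{a_n}\cdot n\sqrt{a_n}\bigg)^{\!2} \;\le\; V\,(A+B).
\]
Separately, the purely algebraic identity $(2A+B)^2 - 4A(A+B) = B^2 \ge 0$ gives $(2A+B)^2 \ge 4A(A+B)$. Combining these two bounds,
\[
V\,(2A+B)^2 \;\ge\; V\cdot 4A(A+B) \;=\; 4A\cdot V(A+B) \;\ge\; 4A\cdot A^2 \;=\; 4A^3.
\]
The hypothesis \eqref{condition} finally yields $2A+B \le (\alpha+2)A$, hence $V\,(\alpha+2)^2 A^2 \ge V\,(2A+B)^2 \ge 4A^3$, which rearranges to \eqref{prop} (the case $A = 0$ being trivial).

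The only delicate point I expect is the rigorous verification of the Fock space expansions for $A$ and $B$ under the finiteness hypothesis of \eqref{condition}; beyond that, the argument reduces to an application of Cauchy--Schwarz followed by a short algebraic manipulation.
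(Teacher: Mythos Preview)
Your proof is correct and shares the paper's essential first step: both reduce the problem via the Fock space representation to the three scalar quantities $V=\sum_{n\ge1}a_n$, $A=\sum_{n\ge1}na_n$, $B=\sum_{n\ge1}n(n-1)a_n$, obtained exactly as you describe. The paper's derivation of these identities proceeds by applying \eqref{eq:fock_space} to $D_xF$ and $D^2_{x,y}F$ and integrating, matching your outline.

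Where you diverge is in the final algebraic step. The paper uses the pointwise inequality $\big(n-\tfrac{\alpha+2}{2}\big)^2\ge0$, equivalently $\tfrac{(\alpha+2)^2}{4}-n\ge n(\alpha-n+1)$, and sums against $a_n$ to obtain directly
\[
\tfrac{(\alpha+2)^2}{4}\,V-A \;\ge\; \sum_{n\ge1}a_n\,n(\alpha-n+1)\;=\;\alpha A-B\;\ge\;0.
\]
Your route instead passes through Cauchy--Schwarz $A^2\le V(A+B)$ and the identity $(2A+B)^2=4A(A+B)+B^2$ to reach $V(2A+B)^2\ge4A^3$, then invokes $B\le\alpha A$. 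Both arguments are short and give the identical constant; the paper's is a termwise completion of the square (and avoids the separate treatment of $A=0$), while yours frames the inequality as an instance of Cauchy--Schwarz, which some readers may find more transparent. The paper also singles out the case $\alpha=0$, but your argument already covers it.
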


The inequality \eqref{prop} provides a non-trivial lower bound for the variance as soon as one can show that the difference operator is non-zero with positive probability. To this end, one can construct special point configurations that lead to a non-zero difference operator and occur with positive probability. This is often much easier than to verify that the expectation of the difference operator is non-zero as required in \eqref{eqn:Bound_First_Chaos}.

Let us discuss some alternative approaches to derive lower variance bounds for Poisson functionals or statistics arising in stochastic geometry. In \cite[Theorem 5.2]{LPS16}, a general lower bound for variances of Poisson functionals is established, where, for fixed $k\in\mathbb{N}$ and $I_1,I_2\subseteq\{1,\hdots,k\}$, one has to bound
$$
\bigg|\mathbb{E}\bigg[ f\big(\eta+\sum_{i\in I_1} \delta_{x_i}\big) - f\big(\eta+\sum_{i\in I_2} \delta_{x_i}\big) \bigg] \bigg|
$$
from below for $x_1,\hdots,x_k\in\mathbb{X}$. Since here more than one point can be added, which allows to enforce particular point configurations, this expression is often easier to control than the expectation of the first difference operator in \eqref{eqn:Bound_First_Chaos}. But one still has the problem that the difference within the expectation can be both positive and negative.

In \cite{BY05,P05,PW08,PY01}, lower bounds for variances of so-called stabilising functionals of Poisson processes and sometimes also binomial point processes were deduced. These results have all in common that generalised difference or add-one-cost operators are required to be non-degenerate. This is similar to our work, but the random variable that has to be non-degenerate is more involved than the difference operator and, moreover, the results apply only to stabilising functionals and not to general Poisson functionals.

A further approach is to condition on some $\sigma$-field and to bound the variance from below by the expectation of the conditional variance with respect to this $\sigma$-field. In the context of stochastic geometry this was used, for example, in \cite{AB93} or \cite{BFV10, R05}. By conditioning on the $\sigma$-field it is sufficient to consider some particular point configurations similarly as in our Theorem \ref{thm:varbound}. In the recent preprint \cite{CX20}, a condition requiring that some conditional expectations are not degenerate is used to establish lower variance bounds for stabilising functionals.

In order to demonstrate how Theorem \ref{thm:varbound} can be applied, we derive lower variance bounds for specific examples from stochastic geometry:

\textbf{Spatial random graphs.} We consider degree and component counts of random geometric graphs and edge length functionals and degree counts of $k$-nearest neighbour graphs. By proving lower bounds for variances of linear combinations of such statistics, we show the positive definiteness of asymptotic covariance matrices. Combining these findings with the results from \cite[Section 3]{SY21} provides quantitative multivariate central limit theorems for the corresponding random vectors.

\textbf{Random polytopes.} By taking the convex hull of the points of a homogeneous Poisson process in the $d$-dimensional unit ball, one obtains a random polytope. We study the $L^p$ surface area, which generalises volume and surface area. For two different $L^p$ surface areas we show positive definiteness of the asymptotic covariance matrix and, as a consequence, a result for the multivariate normal approximation. In particular, this allows to study the joint behaviour of volume and surface area of the random polytope.

\textbf{Poisson shot noise processes.} We provide a lower variance bound for the volume of excursion sets of a Poisson shot noise process. In comparison to the works \cite{BST12}, \cite{L19} or \cite{LPY20} we modify the assumptions on the kernel function of the Poisson shot noise process.

The considered statistics of spatial random graphs fit into the framework of stabilising functionals of Poisson processes, whence the results for the non-degeneracy of the asymptotic variance of stabilising functionals discussed above might be applicable. The $L^p$ surface area is still stabilising, but here the variance does not scale like the intensity of the underlying Poisson process, whence the previously mentioned results are not available any more. Finally, in case of general Poisson shot noise processes we do not have stabilisation at all. In order to apply Theorem \ref{thm:varbound}, one has to bound the left-hand side of \eqref{condition} from above. In case of the spatial random graphs and the random polytope, this can be done easily by employing results from \cite{LSY19} due to stabilisation. 

This paper is organised as follows. Our main result Theorem \ref{thm:varbound} is proven in Section \ref{sec:proof_main_result}. The following three sections are devoted to applications, statistics of spatial random graphs in Section \ref{sec:spatial_geometric_graphs}, the $L^p$ surface area of random polytopes in Section \ref{sec:random_polytopes} and the excursion sets of a Poisson shot noise processes in Section \ref{sec:excursion_sets}. Finally, we recall some facts about stabilising functionals in the appendix.

	\section{Proof of Theorem \ref{thm:varbound}} \label{sec:proof_main_result}
	The proof of Theorem \ref{thm:varbound} relies upon using the Fock space representations of $F$ and its first two difference operators.
	
		\begin{proof}[Proof of Theorem \ref{thm:varbound}]
			For $n\in\mathbb{N}$ let $f_n$ denote the kernels of the Fock space representation of $F$. Recall that 
			\begin{align*}
				\mathrm{Var}[F]&=\sum_{n=1}^\infty n!\lVert f_n\rVert_n^2.
			\end{align*}
			
			First we assume $\alpha>0$. Then we know by assumption \eqref{condition} that $F,D_xF,D_{x,y}^2F\in L^2_\eta$ for $\lambda$-a.e.\ $x,y\in\mathbb{X}$. Using Fubini's theorem, the monotone convergence theorem and applying the Fock space representation \eqref{eq:fock_space} to the first and second order difference operator provides
			\allowdisplaybreaks
			\begin{align*}
				\mathbb{E}\left[\int (D_{x}F)^2 \;\mathrm{d}\lambda(x)\right]&=\int \sum_{n=0}^{\infty}\frac{1}{n!}\int\E[D_{x_1,\dots,x_n}^n(D_xF)]^2\;\mathrm{d}\lambda^n(x_1,\dots,x_n)\;\mathrm{d}\lambda(x)\\
				&= \sum_{n=0}^{\infty}\frac{1}{n!}\int\E[D_{x_1,\dots,x_n,x_{n+1}}^{n+1}F]^2\;\mathrm{d}\lambda^{n+1}(x_1,\dots,x_n,x_{n+1})\\
				&=\sum_{n=1}^\infty \frac{n}{n!}\int\E[D_{x_1,\dots,x_n}^{n}F]^2\;\mathrm{d}\lambda^n(x_1,\dots,x_n)\\
				&=\sum_{n=1}^{\infty}nn!\lVert f_n\rVert_n^2\\
				\text{and, similarly, }\;\;\;\;\;\;\;\;\;\;\;\;\;\;\;\;\;&\\
				\mathbb{E}\left[\int (D_{x,y}^2F)^2\; \mathrm{d}\lambda^2(x,y)\right]&=\int \sum_{n=0}^\infty\frac{1}{n!}\int\E[D_{x_1,\dots,x_n}^n(D_{x,y}F)]^2\;\mathrm{d}\lambda^{n}(x_1,\dots,x_{n})\;\mathrm{d}\lambda^2(x,y)\\
				&= \sum_{n=0}^{\infty}\frac{1}{n!}\int\E[D_{x_1,\dots,x_{n+2}}^{n+2}F]^2\;\mathrm{d}\lambda^{n+2}(x_1,\dots,x_{n+2})\\
				&=\sum_{n=2}^\infty \frac{n(n-1)}{n!}\int\E[D_{x_1,\dots,x_n}^{n}F]^2\;\mathrm{d}\lambda^n(x_1,\dots,x_n)\\
				&=\sum_{n=1}^{\infty}n(n-1)n!\lVert f_n\rVert_n^2.
			\end{align*}
			Therefore, assumption \eqref{condition} means that  $\sum_{n=1}^{\infty}n!n\lVert f_n\rVert_n^2(\alpha-n+1)\geq 0$. Additionally, $\left(n-\frac{(\alpha +2)}{2}\right)^2\geq 0$ implies $\frac{(\alpha+2)^2}{4}-n\geq n(\alpha-n+1)$ for any $n\in\mathbb{N}$. Thus, it holds
			\begin{align*}
				\frac{(\alpha+2)^2}{4}\mathrm{Var}[F]-\mathbb{E}\left[\int (D_{x}F)^2 \;\mathrm{d}\lambda(x)\right]&=\sum_{n=1}^{\infty}n!\lVert f_n\rVert_n^2\left(\frac{(\alpha+2)^2}{4}-n\right)\\&\geq \sum_{n=1}^{\infty}n!\lVert f_n\rVert_n^2n(\alpha-n+1)\geq 0,
			\end{align*}
			which provides the lower bound for the variance in \eqref{prop} for $\alpha>0$.
				
			For $\alpha=0$ we have that $D_{x,y}F=0$ almost surely for $\lambda$-a.e.\ $x,y\in\mathbb{X}$. Hence, all difference operators of order greater than or equal to $2$ vanish almost surely for $\lambda$-a.e.\ $x,y\in\mathbb{X}$. Therefore, $\lVert f_n\rVert_n=0$ for all $n\in\mathbb{N}$ with $n\geq 2$. It follows from the representation of the difference operator in terms of the kernels of the Fock space representation (see e.g. \cite[Theorem 3]{L16}) that $D_xF=f_1(x)$ almost surely for $\lambda$-a.e.\ $x\in\mathbb{X}$, which provides the bound in Theorem \ref{thm:varbound} for $\alpha=0$.
		\end{proof}

\begin{rema}
Note that Fock space representations also exist for functionals of isonormal Gaussian processes and for functionals of Rademacher sequences (i.e.\ sequences of independent random variables with values $\pm 1$). For these one can also define operators $D$ and $D^2$ whose Fock space representations are as in the Poisson case. Since our proof of Theorem \ref{thm:varbound} only requires the Fock space representations of $F$, $DF$ and $D^2F$, the statement of Theorem \ref{thm:varbound} continues to hold for functionals of isonormal Gaussian processes and for functionals of Rademacher sequences if we rewrite the integrals with respect to $\lambda$ in a proper way. For more details on the Fock space representations and the operators $D$ and $D^2$ we refer the reader to, for example, \cite{NP12} for the Gaussian case and \cite{KRT17} for the Rademacher case.
\end{rema}		
	
		\section{Spatial random graphs}\label{sec:spatial_geometric_graphs}
		
		In the following sections we apply our main result to problems from stochastic geometry. Therefore, we interpret Poisson processes as collections of random points in $\mathbb{X}$, which is why we write from now on for $A\subseteq\mathbb{X}$ under abuse of notation
		\begin{align*}
			\eta\cup A=\eta+\sum_{x\in A}\delta_x.
		\end{align*}
		Analogously, we use $\eta\cap A$ and $\eta\backslash A$.
		Throughout this paper, we denote by $\lambda_d$ the $d$-dimensional Lebesgue measure and by $\kappa_{d}$ the volume of the $d$-dimensional unit ball for $d\geq 1$. The $d$-dimensional closed ball with centre $x$ and radius $r$ is denoted by $B^d(x,r)$.
		
		Let $W\subset \mathbb{R}^d$ be a non-empty compact convex set with $\lambda_d(W)>0$. For $s\geq 1$ let $\eta_s$ be a homogeneous Poisson process on $W$ with intensity $s$, i.e.\ a Poisson process on $\mathbb{R}^d$ with intensity measure $\lambda=s\lambda_d|_{W}$, where $\lambda_d|_{W}$ denotes the restriction of the Lebesgue measure to $W$. In the following we study the asymptotic behaviour as $s\to\infty$.
		\subsection{Random geometric graph}
		
		 In this section we consider the vector of degree counts and the vector of component counts of a random geometric graph. For both examples we know from \cite[Section 3.2]{SY21} that, after centering and with a scaling of $s^{-1/2}$, they fulfil a quantitative central limit theorem in $d_2$- and $d_{convex}$- distance if the corresponding asymptotic covariance matrix is positive definite. In the following we show that the asymptotic covariance matrix is indeed positive definite.
		 
		 Let $G_{r_s}$ denote the random geometric graph that is generated by $\eta_s$ and has radius $r_s=\varrho s^{-1/d}$ for a fixed $\varrho>0$, i.e.\ the vertex set of the graph is $\eta_s$ and two distinct vertices $v_1, v_2\in \eta_{s}$ are connected by an edge if  $\lVert v_1-v_2\rVert\leq r_s$.
		 For $j\in\mathbb{N}_0$ let $V_j^{r_s}$ be the number of vertices of degree $j$ in $G_{r_s}$, i.e.\
		 \begin{align*}
		 	V_j^{r_s}=\sum_{y\in\eta_s}\mathbbm{1}\{\mathrm{deg}(y,\eta_s)=j\},
		 \end{align*}
		 where $\mathrm{deg}(y,\eta_s)$ stands for the degree of $y$ in $G_{r_s}$. Moreover, let $C_j^{r_s}$ denote the number of components of size $j$ in $G_{r_s}$, i.e.\
		 \begin{align*}
		 	C_j^{r_s}=\frac{1}{j}\sum_{y\in\eta_s}\mathbbm{1}\{ \lvert\mathrm{C}(y,\eta_s)\rvert=j\},
		 \end{align*}
		 where $\lvert\mathrm{C}(y,\eta_s)\rvert$ is the number of vertices of the component $C(y,\eta_s)$ of $y$ in $G_{r_s}$.
		\begin{theorem}
			\label{theorem:random_geometric_graph}
			\begin{enumerate}
				\item [a)]		For $s\to\infty$ the asymptotic covariance matrix of the vector of degree counts $\frac{1}{\sqrt{s}}(V_{j_1}^{r_s},\dots,V_{j_n}^{r_s})$ for distinct $j_i\in\mathbb{N}_0$, $i\in\{1,\dots,n\}$, is positive definite, i.e.\ for any $\alpha=(\alpha_1,\dots,\alpha_n)\in\mathbb{R}^n\backslash\{0\}$ there exists a constant $c>0$ such that for $s$ sufficiently large
				\begin{align*}
					\mathrm{Var}\left[\sum_{i=1}^{n}\alpha_iV_{j_i}^{r_s}\right]\geq cs.
				\end{align*}
				\item [b)]
				For $s\to\infty$ the asymptotic covariance matrix of the vector of component counts $\frac{1}{\sqrt{s}}(C_{j_1}^{r_s},\dots,C_{j_n}^{r_s})$ for distinct $j_i\in\mathbb{N}_0$, $i\in\{1,\dots,n\}$, is positive definite, i.e.\  for any $\alpha=(\alpha_1,\dots,\alpha_n)\in\mathbb{R}^n\backslash\{0\}$ there exists a constant $c>0$ such that for $s$ sufficiently large
				\begin{align*}
					\mathrm{Var}\left[\sum_{i=1}^{n}\alpha_iC_{j_i}^{r_s}\right]\geq cs.
				\end{align*}
			\end{enumerate}
		\end{theorem}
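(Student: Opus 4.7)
The plan is to apply Theorem~\ref{thm:varbound} to $F_s=\sum_{i=1}^n\alpha_i V^{r_s}_{j_i}$ in case~(a) and analogously to $F_s=\sum_{i=1}^n\alpha_i C^{r_s}_{j_i}$ in case~(b). The hypothesis~\eqref{condition} will be verified using the stabilisation properties of these functionals (degree counts stabilise within radius $2r_s$ almost surely, and component counts stabilise on a scale with exponential tails in the regime considered in~\cite{SY21}): combining the appendix bounds with the general moment estimates for stabilising Poisson functionals from~\cite{LSY19}, one obtains a constant $\alpha_0$ independent of $s$ with
$$
\mathbb{E}\bigg[\int (D^2_{x,y}F_s)^2\,\mathrm{d}\lambda^2(x,y)\bigg]\le\alpha_0\,\mathbb{E}\bigg[\int (D_xF_s)^2\,\mathrm{d}\lambda(x)\bigg]<\infty.
$$
Theorem~\ref{thm:varbound} then reduces the problem to showing $\mathbb{E}[\int (D_xF_s)^2\,\mathrm{d}\lambda(x)]\ge c\,s$ for some $c=c(\alpha)>0$.

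To obtain this lower bound, I would restrict to the bulk set $W'_s=\{x\in W:\mathrm{dist}(x,\partial W)\ge 3r_s\}$, whose Lebesgue volume converges to $\lambda_d(W)$. For every $x\in W'_s$ and $k\in\{0,1,\dots,K\}$ with $K=1+\max_i j_i$, let $A_k(x)$ denote the local event that $\eta_s$ has exactly $k$ points in $B(x,r_s/10)$ and none in $B(x,3r_s)\setminus B(x,r_s/10)$. Since the expected number of points of $\eta_s$ in $B(x,3r_s)$ equals the $s$-independent constant $3^d\kappa_d\varrho^d$, each $A_k(x)$ has probability bounded below by a positive constant uniformly in $s$. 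On $A_k(x)$ the whole configuration inside the stabilisation region of $x$ is prescribed, so $D_xF_s$ is a deterministic integer: in case~(a) a direct bookkeeping of degree changes yields $D_xV^{r_s}_k=k+1$, $D_xV^{r_s}_{k-1}=-k$ (other terms vanish); in case~(b) a similar inspection yields $D_xC^{r_s}_{k+1}=1$, $D_xC^{r_s}_k=-\mathbbm{1}\{k\ge 1\}$ (other terms vanish).

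The combinatorial core is to choose $k^*$ killing all possible cancellations in $D_xF_s|_{A_{k^*}(x)}$. Let $i^*$ be an index with $\alpha_{i^*}\ne 0$ and $M=j_{i^*}$ maximal among $\{j_i:\alpha_i\ne 0\}$. Take $k^*=M+1$ in case~(a) and $k^*=M$ in case~(b). By the maximality of $M$, the partner index appearing in the formula above (namely $k^*=M+1$ in case~(a) and $k^*+1=M+1$ in case~(b)) either does not belong to $\{j_1,\dots,j_n\}$ or has coefficient $\alpha=0$, so the corresponding term is absent from $F_s$ and $D_xF_s|_{A_{k^*}(x)}$ collapses to a single nonzero contribution, $-(M+1)\alpha_{i^*}$ in case~(a) respectively $-\alpha_{i^*}$ in case~(b). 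Integrating $(D_xF_s)^2\ge c_\alpha^2\,\mathbbm{1}_{A_{k^*}(x)}$ over $W'_s$ with respect to $s\,\mathrm{d}\lambda_d$ then yields the desired $\Omega(s)$ lower bound.

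The step I expect to be most delicate is the verification of~\eqref{condition} in the component count case, where second-order difference operators must be controlled when two simultaneously added points can jointly merge components whose stabilisation neighbourhoods of either point alone do not reach; this is precisely where the exponential tail bounds on the stabilisation radius from~\cite{LSY19} must be invoked uniformly in~$s$.
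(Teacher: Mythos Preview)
Your proposal is correct and follows essentially the same route as the paper's proof: the paper likewise picks the maximal index $m=\mathrm{argmax}_{i:\alpha_i\neq 0}j_i$, prescribes the local configuration with $j_m+1$ (respectively $j_m$) points in an inner ball around $x$ and an empty annulus, reads off $D_xF_s=-(j_m+1)\alpha_m$ (respectively $-\alpha_m$), and bounds the second-order term by $O(s)$ via the exponential stabilisation estimates of \cite{LSY19} (Lemma~\ref{lemma:second_difference_operator}). One small logical point: you cannot verify condition~\eqref{condition} with an $s$-independent $\alpha_0$ \emph{before} knowing the lower bound on $\mathbb{E}\int(D_xF_s)^2\,\mathrm{d}\lambda$; in practice (and in the paper) one shows $\mathbb{E}\int(D^2_{x,y}F_s)^2\,\mathrm{d}\lambda^2\le Cs$ and $\mathbb{E}\int(D_xF_s)^2\,\mathrm{d}\lambda\ge cs$ separately, whence $\alpha_0=C/c$.
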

		Before we prove the theorem, we introduce the following lemma that provides condition \eqref{condition}. It gives an estimate for the expected integral of the squared second-order difference operator of a stabilising Poisson functional. We call a Poisson functional $F_s$ stabilising if it can be written as a sum of scores, i.e.\
		\begin{align}
			\label{eq:sum_of_scores}
			F_s=F_s(\eta_s)=\sum_{x\in\eta_s}\xi_s(x,\eta_s),
		\end{align} 
		where the scores $\xi_s$ are exponentially stabilising, fulfil a moment condition and decay exponentially fast with distance to a set $K$. For details on stabilising Poisson functionals and definitions see Section \ref{appendix:stabilising_functionals}. 
	
		\begin{lemma}
			\label{lemma:second_difference_operator}
			Let $F_s^{(1)},\dots,F_s^{(n)}$ be Poisson functionals on $\eta_s$, which can be written in the form of \eqref{eq:sum_of_scores} and whose corresponding scores $\xi_s^{(1)},\dots,\xi_s^{(n)}$ satisfy a $(4+p)$-th moment condition for $p>0$ and are exponentially stabilising.
			Then,  for any $\alpha=(\alpha_1,\dots,\alpha_n)\in\mathbb{R}^n\backslash\{0\}$ there exists a constant $c>0$ such that for $s\geq 1$,
			\begin{align*}
				\mathbb{E}\Big[\int_W\int_W \Big(\sum_{i=1}^{n}\alpha_iD_{x,y}^2F_{s}^{(i)}\Big)^2\;\mathrm{d}\lambda(x)\;\mathrm{d}\lambda(y)\Big]
				&\leq c s.
			\end{align*}
		\end{lemma}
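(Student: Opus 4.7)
The plan is to reduce to a single functional and then invoke the moment estimates for difference operators of stabilising Poisson functionals from \cite{LSY19}. First I would apply the Cauchy--Schwarz inequality $\big(\sum_{i=1}^{n}\alpha_i a_i\big)^2\le \big(\sum_{i=1}^n\alpha_i^2\big)\big(\sum_{i=1}^n a_i^2\big)$ pointwise in $(x,y)$ and interchange sum and integral, so that it suffices to establish
\[
\mathbb{E}\Big[\int_W\int_W (D_{x,y}^2F_s^{(i)})^2\,\mathrm{d}\lambda(x)\,\mathrm{d}\lambda(y)\Big]\le c_i\,s
\]
separately for each $i\in\{1,\dots,n\}$.

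Next, for each $i$ I would expand the second-order difference operator of the sum-of-scores functional into contributions of two types: boundary terms involving the score $\xi_s^{(i)}$ evaluated at the added points $x$ and $y$ (in various combinations of arguments), and a sum over the existing points $z\in\eta_s$ of an iterated add-point difference of the score at $z$. Exponential stabilisation implies that, outside an event of exponentially small probability, each such contribution vanishes unless the relevant evaluation point lies within the stabilisation radius, which is of order $s^{-1/d}$, of both $x$ and $y$. Consequently the pairs $(x,y)\in W^2$ on which $\mathbb{E}[(D_{x,y}^2F_s^{(i)})^2]$ is not negligible form a region of Lebesgue measure $O(s^{-1})$. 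Combining this localisation with a H\"older-type argument that uses the $(4+p)$-th moment condition to dominate the second moment of $D_{x,y}^2F_s^{(i)}$ uniformly in $(x,y)$ and $s$ yields $\int_W\int_W\mathbb{E}[(D_{x,y}^2F_s^{(i)})^2]\,\mathrm{d}x\,\mathrm{d}y=O(s^{-1})$, and multiplying by the factor $s^2$ coming from $\mathrm{d}\lambda=s\,\mathrm{d}x$ gives the required bound of order $s$.

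The main obstacle is the bookkeeping for this H\"older splitting, and in particular the balance between the polynomial moment of the score and the exponentially small tail of the stabilisation radius: the $(4+p)$-th moment condition, rather than a plain fourth-moment condition, is exactly what provides the slack needed to absorb the polynomial moment against the tail and to obtain control uniform in $s$. Fortunately, estimates of essentially this form are carried out in \cite{LSY19} in the course of their normal approximation bounds for stabilising Poisson functionals, so I would invoke those results directly rather than redo the computation, after checking that the definitions of exponential stabilisation and moment condition recalled in Section \ref{appendix:stabilising_functionals} match the ones used there.
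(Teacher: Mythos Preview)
Your proposal is correct and follows essentially the same approach as the paper: the paper likewise reduces to a single functional via Jensen's (equivalently, your Cauchy--Schwarz) inequality, then applies the H\"older splitting $\mathbb{E}|D_{x,y}^2F_s^{(i)}|^2\le (\mathbb{E}|D_{x,y}^2F_s^{(i)}|^{\varepsilon})^{2/\varepsilon}\,\mathbb{P}(D_{x,y}^2F_s^{(i)}\neq 0)^{1-2/\varepsilon}$ with $\varepsilon\in(4,4+p)$, and invokes \cite[Lemmas 5.5 and 5.9]{LSY19} to bound the two factors. The only minor difference is that the paper does not expand $D_{x,y}^2F_s^{(i)}$ into score contributions at all---it goes straight to the identity $D_{x,y}^2F_s^{(i)}=D_xF_s^{(i)}(\eta_s\cup\{y\})-D_xF_s^{(i)}(\eta_s)$ and cites Lemma~5.5 for the $\varepsilon$-th moment of each term---so your intermediate expansion is unnecessary once you invoke those lemmas.
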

	\begin{proof}
		We can apply \cite[Lemma 5.5 and Lemma 5.9]{LSY19}, i.e.\ for $i\in\{1,\dots,n\}$ and constants $\varepsilon\in(4,4+p)$, $\beta>0$ there exist constants $C_\varepsilon, C_\beta>0$ such that 
		\begin{align}
			\mathbb{E}\lvert D_{x}F_{s}^{(i)}(\eta_s\cup A)\rvert^{\varepsilon}\leq C_\varepsilon
			\label{lemma5.5}
		\end{align}
		for $A\subset W$ with $\lvert A\rvert\leq 1$, $x\in W$ and $s\geq 1$, where $\lvert A\rvert$ denotes the cardinality of $A$, and
		\begin{align}
			s\int_W \mathbb{P}(D_{x,y}^2F_{s}^{(i)}\neq 0)^\beta\;\mathrm{d}y\leq C_\beta
			\label{lemma5.9}
		\end{align}
		for $s\geq1$ and $x\in W$. Fix an $\varepsilon\in(4,4+p)$.
		Using \eqref{lemma5.5}, Hölder's inequality for $\frac{\varepsilon}{2}$ and $q=(1-\frac{2}{\varepsilon})^{-1}$ and Jensen's inequality provides
		\begin{align*}
			\mathbb{E}\lvert D_{x,y}^2F_{s}^{(i)}\rvert^2&=\mathbb{E}\left[\lvert D_{x,y}^2F_{s}^{(i)}\rvert^2\mathbbm{1}\{D_{x,y}^2F_{s}^{(i)}\neq 0\}\right]\\
			&\leq 	(\mathbb{E}\lvert D_{x,y}^2F_{s}^{(i)}\rvert^{\varepsilon})^{2/\varepsilon}\mathbb{P}(D_{x,y}^2F_{s}^{(i)}\neq 0)^{1/q}\\
			&=	(\mathbb{E}\lvert D_{x}F_{s}^{(i)}(\eta_s\cup\{y\})-D_{x}F_{s}^{(i)}(\eta_s)\rvert^{\varepsilon})^{2/\varepsilon}\mathbb{P}(D_{x,y}^2F_{s}^{(i)}\neq 0)^{1/q}\\
			&\leq 	\left(2^{\varepsilon-1}\left(\mathbb{E}\lvert D_{x}F_{s}^{(i)}(\eta_s\cup\{y\})\rvert^{\varepsilon}+\mathbb{E}\lvert D_{x}F_{s}^{(i)}(\eta_s)\rvert^{\varepsilon}\right)\right)^{2/\varepsilon}\mathbb{P}(D_{x,y}^2F_{s}^{(i)}\neq 0)^{1/q}\\
			&\leq 4C_\varepsilon^{2/\varepsilon}\mathbb{P}(D_{x,y}^2F_{s}^{(i)}\neq 0)^{1/q}
		\end{align*}
		for $i\in\{1,\dots,n\}$. Therefore, using Jensen's inequality and \eqref{lemma5.9}, it follows
		\allowdisplaybreaks
		\begin{align*}
			&\mathbb{E}\Big[\int_W\int_W \Big(D_{x,y}^2\sum_{i=1}^{n}\alpha_iF_{s}^{(i)}\Big)^2\;\mathrm{d}\lambda(x)\;\mathrm{d}\lambda(y)\Big]\\&
			\leq \int_W\int_W 	\mathbb{E}\Big[n\sum_{i=1}^{n}\alpha_i^2(D_{x,y}^2F_{s}^{(i)})^2\Big]\;\mathrm{d}\lambda(x)\;\mathrm{d}\lambda(y)\\&
			= n\sum_{i=1}^{n}\alpha_i^2\int_W\int_W 		\mathbb{E}\lvert D_{x,y}^2F_{s}^{(i)}\rvert^2\;\mathrm{d}\lambda(x)\;\mathrm{d}\lambda(y)\\&
			\leq n\sum_{i=1}^{n}\alpha_i^2 s\int_W s\int_W  4C_\varepsilon^{2/\varepsilon}\mathbb{P}(D_{x,y}^2F_{s}^{(i)}\neq 0)^{1/q}\;\mathrm{d}x\;\mathrm{d}y\\
			&\leq n\sum_{i=1}^{n}\alpha_i^2s\int_W 4C_\varepsilon^{2/\varepsilon}C_{1/q}\;\mathrm{d}x\leq cs
		\end{align*}
		for some constant $c>0$, which completes the proof.
	\end{proof}
		\begin{proof}[Proof of Theorem \ref{theorem:random_geometric_graph}]
			For $x\in W$ and $j\in\mathbb{N}_0$ the difference operators are given by
			\begin{align*}
				D_xV_j^{r_s}=\mathbbm{1}\{\mathrm{deg}(x,\eta_s\cup\{x\})=j\}+\sum_{y\in \eta_s}(\mathbbm{1}\{\mathrm{deg}(y,\eta_s\cup\{x\})=j\}-\mathbbm{1}\{\mathrm{deg}(y,\eta_s)=j\})
			\end{align*}
			and 
			\begin{align*}
				&D_xC_j^{r_s}=\frac{1}{j}\mathbbm{1}\{\lvert \mathrm{C}(x,\eta_s\cup\{x\})\rvert=j\}+\frac{1}{j}\sum_{y\in\eta_s}(\mathbbm{1}\{\lvert\mathrm{C}(y,\eta_s\cup\{x\})\rvert=j\}-\mathbbm{1}\{\lvert \mathrm{C}(y,\eta_s)\rvert=j\}).
			\end{align*}
			Let $m=\mathrm{argmax}_{i\in\{1,\dots,n\}:\alpha_i\neq 0}j_i$ and $x\in W$.
			For a) we consider configurations where 
			\begin{align*}
				\eta_s\left(B^d\left(x,\frac{r_s}{2}\right)\right)=j_m+1 \quad\text{ and }\quad \eta_s\Big(B^d\Big(x,\frac{3}{2}r_s\Big)\Big\backslash B^d\Big(x,\frac{r_s}{2}\Big)\Big)=0.
			\end{align*} 
			Then, it follows for any $y\in\eta_s$ with $y\in B^d(x,\frac{r_s}{2})\cap W$ that
			\begin{align*}
				\mathrm{deg}(y,\vartheta)=\begin{cases}
					j_m, &\text{ for }\vartheta=\eta_s,\\
					j_m+1, &\text{ for }\vartheta=\eta_s\cup\{x\}.
				\end{cases}
			\end{align*}
			The degrees of all the other points are not affected by adding $x$. Thus, in this situation only the numbers of points with degree $j_m$ and $j_m+1$ change. Due to the choice of $m$, we have
			\begin{align*}
				\left\lvert D_x\left(\sum_{i=1}^{n}\alpha_iV_{j_i}^{r_s}\right)\right\rvert=\left\lvert\sum_{i=1}^{n}\alpha_iD_xV_{j_i}^{r_s}\right\rvert=\lvert\alpha_mD_xV_{j_m}^{r_s}\rvert=\left\lvert\alpha_{m}(-(j_m+1))\right\rvert\geq \lvert\alpha_{m}\rvert.
			\end{align*}
			For b) we consider configurations where 
			\begin{align*}
				\eta_s\Big(B^d\Big(x,\frac{r_s}{2}\Big)\Big)=j_m \quad\text{ and }\quad \eta_s\Big(B^d\Big(x,\frac{3}{2}r_s\Big)\Big\backslash B^d\Big(x,\frac{r_s}{2}\Big)\Big)=0.
			\end{align*}
			It follows that $C_{j_m}^{r_s}$ decreases by $1$ by adding $x$ and $C_{j_m+1}^{r_s}$ increases by 1. The other component counts are not affected. Because of the choice of $m$, it holds
			\begin{align*}
				\left\lvert D_x\left(\sum_{i=1}^{n}\alpha_iC_{j_i}^{r_s}\right)\right\rvert=\left\lvert\sum_{i=1}^{n}\alpha_iD_xC_{j_i}^{r_s}\right\rvert=\lvert\alpha_mD_xC_{j_m}^{r_s}\rvert=\left\lvert\alpha_{m}\right\rvert.
			\end{align*}
			Let $A_s=\{x\in W:B^d(x,\frac{r_s}{2})\subset W\}$. Then, for $F_{j_i}^{r_s}=V_{j_i}^{r_s}$ or  $F_{j_i}^{r_s}=C_{j_i}^{r_s}$ for $i\in\{1,\dots, n\}$ and 
			$$
			k=\begin{cases}
				j_m+1, &\text{ for }F_{j_i}^{r_s}=V_{j_i}^{r_s},\\
				j_m, &\text{ for }F_{j_i}^{r_s}=C_{j_i}^{r_s},
			\end{cases}
			$$
			it follows for $s$ sufficiently large such that $\lambda_d(A_s)\geq\frac{\lambda_d(W)}{2}$ that
			\allowdisplaybreaks
			\begin{align*}
				&\mathbb{E}\int_W \Big(\sum_{i=1}^{n}\alpha_iD_xF_{j_i}^{r_s}\Big)^2\;\mathrm{d}\lambda(x)\geq s\alpha_{m}^2\int_W\mathbb{P}\Big(\Big\lvert \sum_{i=1}^{n}\alpha_iD_xF_{j_i}^{r_s}\Big\rvert\geq\rvert\alpha_{m}\rvert\Big)\;\mathrm{d}x\\
				&\geq s\alpha_{m}^2\int_{A_s}\mathbb{P}\Big(\eta_s\Big(B^d\Big(x,\frac{r_s}{2}\Big)\Big)=k,\eta_s\Big(B^d\Big(x,\frac{3}{2}r_s\Big)\Big\backslash B^d\Big(x,\frac{r_s}{2}\Big)\Big)=0\Big)\;\mathrm{d}x\\
				&\geq s\alpha_{m}^2\int_{A_s}\frac{(s\kappa_dr_s^d)^{k}}{2^{dk}k!}e^{-s\kappa_dr_s^d/2^d}e^{-s\kappa_d(3^d-1)r_s^d/2^d}\;\mathrm{d}x\\
				&\geq s\alpha_{m}^2\frac{\lambda_d(W)}{2}\frac{(\kappa_d\varrho^d)^{k}}{2^{dk}k!}e^{-\kappa_d3^d\varrho^d/2^d} =:c\cdot s,
			\end{align*}
			where $c>0$ depends on $W,\alpha,\varrho, k$ and $d$.
			
			Both functionals can be written as sums of scores as in \eqref{eq:sum_of_scores}. For $j\in\mathbb{N}_0$, $y\in \eta_s$ and $s\geq 1$ the score for the degree count of degree $j$ is given by
			\begin{align*}
				\xi_s(y,\eta_s)=\mathbbm{1}\{\mathrm{deg}(y,\eta_s)=j\}
			\end{align*}
			and for $j\in\mathbb{N}$, $y\in\eta_s$ and $s\geq 1$ the score for the number of components of size $j$ is
			\begin{align*}
					\xi_s(y,\eta_s)=\frac{1}{j}\mathbbm{1}\{\lvert \mathrm{C}(y,\eta_s)\rvert=j\}.
			\end{align*}
			These scores clearly fulfil a $(4+p)$-th moment condition and are 
			by \cite[proofs of Theorem 3.5 (b) and Theorem 3.6 (b)]{SY21} exponentially stabilising. Therefore, we can apply Lemma \ref{lemma:second_difference_operator}, which completes together with Theorem \ref{thm:varbound} the proof.
		\end{proof}
		
		\subsection{$k$-nearest neighbour graph}
			Central limit theorems for the total edge length of a $k$-nearest neighbour graph of a Poisson process are derived in e.g.\ \cite{AB93,BY05,LSY19,LPS16,PY01,PY05,SY21}. The first quantitative result can be found in \cite{AB93}. This convergence rate was further improved in \cite{PY05} before in \cite{LPS16} the presumably optimal rate was shown. In \cite{SY21} this result was transferred to the multivariate case of a vector of edge length functionals but it was left open to show in general that its asymptotic covariance matrix is positive definite. For edge length functionals of nonnegative powers this is proven in the following section.
			
		 We consider the $k$-nearest neighbour graph for $k\in\mathbb{N}$ that is generated by the Poisson process $\eta_s$, i.e.\ the undirected graph with vertex set $\eta_s$, where each vertex is connected with its $k$-nearest neighbours. The set of all $k$-nearest neighbours of $v_1\in\eta_s$ contains almost surely all $v_2\in\eta_s\backslash\{v_1\}$ for which $\lVert v_1-v_2\rVert\geq\rVert v_1-x\lVert$ for at most $k-1$ vertices $x\in\eta_s\backslash\{v_1\}$ or $\eta_{s}(B^d(v_1,\lVert v_1-v_2\rVert)\backslash\{v_1\})\leq k-1$.
		For $q\in[0,\infty)$ let $L_q$ denote the edge length functional of power $q$ of the $k$-nearest neighbour graph generated by $\eta_s$ which is defined by
	\begin{align*}
		L_q=\frac{1}{2}\sum_{(y,z)\in\eta_{s,\neq}^2}\mathbbm{1}\{z\in N(y,\eta_s) \text{ or }y\in N(z,\eta_s)\}\lVert y-z\rVert^q,
	\end{align*}
	where $\eta_{s,\neq}^2$ denotes the set of all pairs of disjoint points of $\eta_{s}$ and $N(y,\eta_s)$ is the set of all $k$-nearest neighbours of $y$ in the $k$-nearest neighbour graph generated by $\eta_s$. Let  $F_q=s^{q/d}L_q$ be its scaled version.
		\begin{theorem}
			\label{theorem:knn_edge_length}
			For $s\to\infty$ the asymptotic covariance matrix of $\frac{1}{\sqrt{s}}(F_{q_1},\dots,F_{q_n})$ for distinct $q_i\geq 0$, $i\in\{1,\dots,n\}$, is positive definite, i.e.\ for any $\alpha=(\alpha_1,\dots,\alpha_n)\in\mathbb{R}^n\backslash\{0\}$ there exists a constant $c>0$ such that for $s$ sufficiently large
			\begin{align*}
				\mathrm{Var}\left[\sum_{i=1}^n\alpha_iF_{q_i}\right]\geq cs.
			\end{align*}
		\end{theorem}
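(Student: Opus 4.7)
The plan is to apply Theorem \ref{thm:varbound} to the Poisson functional $G_s := \sum_{i=1}^n \alpha_i F_{q_i}$. The required upper bound on $\mathbb{E}\int_W\int_W (D^2_{x,y} G_s)^2\,\mathrm{d}\lambda(x)\,\mathrm{d}\lambda(y)$ of order $s$ follows immediately from Lemma \ref{lemma:second_difference_operator}: each $F_q$ is a sum of scores $\xi^{(q)}_s(y,\eta_s)=\frac{s^{q/d}}{2}\sum_{z\in N(y,\eta_s)}\|y-z\|^q$, and for the $k$-nearest neighbour graph these scores are known to be exponentially stabilising and to satisfy a $(4+p)$-th moment condition (see e.g.\ \cite{LSY19,SY21}). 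Everything therefore reduces to a matching lower bound of the form $\mathbb{E}\int_W (D_x G_s)^2\,\mathrm{d}\lambda(x)\geq c\,s$ for some $c>0$ and all $s$ sufficiently large.

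The key analytic input is that, since $q_1,\ldots,q_n$ are pairwise distinct nonnegative exponents and $\alpha\neq 0$, the function $p(r):=\sum_{i=1}^n\alpha_i r^{q_i}$ is analytic and not identically zero on $(0,\infty)$, so one can fix $0<R_1<R_2<\infty$ and $c_0>0$ such that $p$ has constant sign with $|p(r)|\geq c_0$ on $[R_1,R_2]$. For $x$ in the bulk of $W$ (at distance at least $R_2 s^{-1/d}$ from $\partial W$), I would consider the event $E_x$ requiring (i) $\eta_s(B^d(x,R_1 s^{-1/d}))=0$, (ii) $\eta_s$ has exactly $k$ points in the annulus $A_x:=B^d(x,R_2 s^{-1/d})\setminus B^d(x,R_1 s^{-1/d})$, and (iii) no point $y\in\eta_s$ gets its $k$-nearest-neighbour list changed when $x$ is inserted. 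On $E_x$ the $k$-nearest neighbours of $x$ in $\eta_s\cup\{x\}$ are exactly the $k$ points of $A_x$ and the only edges affected in the $k$-NN graph are the $k$ new edges incident to $x$, so
\begin{align*}
D_x L_{q_i}=\sum_{j=1}^k (r^{(j)})^{q_i},\qquad r^{(j)}\in[R_1 s^{-1/d},R_2 s^{-1/d}],
\end{align*}
and consequently
\begin{align*}
|D_x G_s|=\Bigl|\sum_{j=1}^k p\bigl(s^{1/d}r^{(j)}\bigr)\Bigr|\geq k c_0
\end{align*}
by the sign-constancy of $p$ on $[R_1,R_2]$.

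It remains to lower bound $\mathbb{P}(E_x)$ by some $\gamma>0$ uniformly in $s$ and in $x$ in the bulk; integrating $\mathbb{E}[(D_x G_s)^2\mathbbm{1}_{E_x}]\geq (kc_0)^2\gamma$ against $\lambda=s\lambda_d|_W$ then produces the desired order-$s$ lower bound, and Theorem \ref{thm:varbound} delivers the claim. Conditions (i) and (ii) are elementary Poisson void and count probabilities of constant order. Condition (iii) is the main obstacle, since it requires that for every $y\in\eta_s$ with $\|y-x\|$ not too large, the $k$-th nearest-neighbour distance of $y$ in $\eta_s$ already be strictly smaller than $\|y-x\|$, so that $x$ cannot displace any existing neighbour. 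I would handle this through the exponential stabilisation of the $k$-NN graph: the $k$-th nearest-neighbour distance of a typical Poisson point has exponential tails of order $s^{-1/d}$, so that combining a tail estimate with a union bound over the (Poisson-distributed, $O(1)$ in expectation) points near $x$, and enlarging $R_2$ if necessary, makes the failure probability of (iii) arbitrarily small uniformly in $s$.
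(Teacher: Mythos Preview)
Your overall plan is sound and the analytic device---choosing $[R_1,R_2]$ on which $p(r)=\sum_i\alpha_i r^{q_i}$ has constant sign and is bounded away from zero---is a clean alternative to the paper's case analysis based on the dominant exponent. The reduction to Lemma~\ref{lemma:second_difference_operator} for the upper bound on the second difference operator is also exactly right.

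The gap is in the treatment of condition (iii), specifically for the $k$ points in the annulus $A_x$ themselves. Your argument (``tail bound plus union bound, enlarge $R_2$ if necessary'') handles the points of $\eta_s$ \emph{outside} $B^d(x,R_2 s^{-1/d})$, and is essentially Lemma~\ref{lemma_By} in the paper. But it does not work for the annulus points. On the event (i)$\cap$(ii) you have forced $B^d(x,R_2 s^{-1/d})$ to contain exactly the $k$ annulus points and nothing else; hence for an annulus point $y_j$ at distance $r_j$ from $x$ one has $B^d(y_j,r_j)\subset B^d(x,2R_1 s^{-1/d})$ whenever $r_j$ is close to $R_1 s^{-1/d}$, and if $R_2\geq 2R_1$ this ball lies entirely inside $B^d(x,R_2 s^{-1/d})$, so it contains at most $k-1$ points of $\eta_s\setminus\{y_j\}$ and $x$ \emph{must} enter $N(y_j,\eta_s\cup\{x\})$. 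In other words, conditionally on (i)$\cap$(ii), condition (iii) fails for annulus points near the inner boundary whenever $R_2\geq 2R_1$. Enlarging $R_2$ therefore goes in the wrong direction: it forces more emptiness around the annulus points and makes (iii) \emph{less} likely for them, not more. The ``typical Poisson point'' tail bound you invoke does not apply here because the conditioning on (i)$\cap$(ii) destroys typicality.

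The paper's construction sidesteps this by placing $k+1$ points in a small ball $B^d(\hat{x},\varepsilon/4)$ with $\hat{x}=x+\tfrac{3}{4}\varepsilon e_1$, so that any two of these points are within distance $\varepsilon/2$ of each other while each is at distance at least $\varepsilon/2$ from $x$. Consequently every one of the $k+1$ clustered points already has the other $k$ as its $k$ nearest neighbours, and inserting $x$ does not disturb their neighbour lists; condition (iii) for the cluster is automatic. Outside the ball, the paper uses Lemma~\ref{lemma_By}, which is your union-bound argument. You can graft your analytic idea onto this configuration: take the small cluster of $k+1$ points at distance in $[R_1 s^{-1/d},R_2 s^{-1/d}]$ from $x$ (with the cluster diameter smaller than $R_1 s^{-1/d}$), so that $D_xG_s=\sum_{y\in N(x,\eta_s\cup\{x\})}p(s^{1/d}\|x-y\|)$ with all arguments in $[R_1,R_2]$, and your lower bound $|D_xG_s|\geq k c_0$ follows exactly as you wrote.
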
 In order to prove this theorem, we need the following lemma, which considers a slightly more general situation since it will be also employed in a further proof.
		\begin{lemma}
			\label{lemma_By}
			Let $k\in\mathbb{N}$ and $j\geq 1$ be fixed. Then there exist constants $c_1,c_2>0$ depending on $k,j,d$ and $W$ such that for all $\varepsilon>0$ and $x\in W$ with $B^d(x,2(j+1)\varepsilon)\subset W$,
			\begin{align*}
				\mathbb{P}(\exists y\in \eta_s\backslash B^d(x, j\varepsilon):\eta_s(A_{j,\varepsilon}(x,y))\leq k-1)\leq c_1e^{-sc_2\varepsilon^d},
			\end{align*}
		where $A_{j,\varepsilon}(x,y)=(B^d(y,\lVert x-y\rVert-(j-1)\varepsilon)\cap W)\backslash (B^d(x,j\varepsilon)\cup\{y\})$.
		\end{lemma}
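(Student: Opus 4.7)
The strategy is to discretise admissible $y$ via a \emph{multi-scale} cover of $W\setminus B^d(x,j\varepsilon)$, associate to each cover ball a deterministic witness region contained in $A_{j,\varepsilon}(x,y)$ for every $y$ in that ball, and then combine a Poisson tail estimate with a union bound. The geometric starting point is the observation that if $r:=\lVert x-y\rVert\in[(j+m)\varepsilon,(j+m+1)\varepsilon)$ for some $m\in\mathbb{N}_0$, then $r-(j-1)\varepsilon\geq (m+1)\varepsilon$, whence
\[
A_{j,\varepsilon}(x,y) \supseteq B^d(y,(m+1)\varepsilon)\cap W\setminus B^d(x,j\varepsilon).
\]
This allows one to choose witnesses at scale $(m+1)\varepsilon$ rather than at the fixed scale $\varepsilon$, which is the key point for making the forthcoming summation converge.

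For each $m\in\mathbb{N}_0$ I would cover the annulus $\{y\in W:\lVert x-y\rVert\in[(j+m)\varepsilon,(j+m+1)\varepsilon)\}$ by at most $N_m \leq C_W(j+m+1)^{d-1}/(m+1)^d$ balls $B^d(y_\ell^{(m)},(m+1)\varepsilon/4)$ with centres in $W$ (a standard packing estimate) and set $A_\ell^{(m)} := B^d(y_\ell^{(m)},3(m+1)\varepsilon/4)\cap W\setminus B^d(x,j\varepsilon)$. The triangle inequality gives $A_\ell^{(m)}\subseteq A_{j,\varepsilon}(x,y)$ for any $y$ in the cover ball outside $B^d(x,j\varepsilon)$. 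The volume estimate $\lambda_d(A_\ell^{(m)})\geq c_0((m+1)\varepsilon)^d$ follows from the convex-body bound $\lambda_d(B^d(z,r)\cap W)\geq c_W r^d$ (valid for $z\in W$ and $r\leq\mathrm{diam}(W)$) when $y_\ell^{(m)}$ is well separated from $\partial B^d(x,j\varepsilon)$; for $y_\ell^{(m)}$ close to $\partial B^d(x,j\varepsilon)$, one instead uses the outward-shifted ball $B^d\bigl(y_\ell^{(m)}+\tfrac{(m+1)\varepsilon}{2}u_\ell,\tfrac{(m+1)\varepsilon}{8}\bigr)$ with $u_\ell := (y_\ell^{(m)}-x)/\lVert y_\ell^{(m)}-x\rVert$, which the hypothesis $B^d(x,2(j+1)\varepsilon)\subset W$ keeps inside $W$ while disjoint from $B^d(x,j\varepsilon)$. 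Since $\eta_s(A_\ell^{(m)})$ is Poisson with mean at least $c_0 s((m+1)\varepsilon)^d$, the standard tail bound $\mathbb{P}(\mathrm{Poisson}(\mu)\leq k-1)\leq C_k e^{-\mu/2}$ gives $\mathbb{P}(\eta_s(A_\ell^{(m)})\leq k-1)\leq C_k e^{-(c_0/2)s(m+1)^d\varepsilon^d}$.

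On the event in the lemma some cover ball contains a point $y$ of $\eta_s$ with $\eta_s(A_{j,\varepsilon}(x,y))\leq k-1$, and then $\eta_s(A_\ell^{(m)})\leq k-1$ as well; a union bound over $m$ and $\ell$ yields
\[
\mathbb{P}\bigl(\exists y\in\eta_s\setminus B^d(x,j\varepsilon):\eta_s(A_{j,\varepsilon}(x,y))\leq k-1\bigr) \leq C\sum_{m=0}^{\infty}\frac{(j+m+1)^{d-1}}{(m+1)^d}\,e^{-(c_0/2)s(m+1)^d\varepsilon^d}.
\]
For $s\varepsilon^d$ above a threshold $T=T(k,j,d,W)$ the super-exponential decay of the summand in $m$ makes the series dominated by the $m=0$ term, giving $\leq c_1 e^{-c_2 s\varepsilon^d}$; for $s\varepsilon^d\leq T$ the trivial bound $\mathbb{P}(\cdot)\leq 1\leq e^{c_2 T}e^{-c_2 s\varepsilon^d}$ takes over. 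The main obstacle is the witness construction: a naive single-scale cover by balls of radius $\varepsilon/4$ would produce $\sim\varepsilon^{-d}$ cover balls, each contributing a factor $e^{-c s\varepsilon^d}$, and the resulting product $\varepsilon^{-d}e^{-c s\varepsilon^d}$ cannot be absorbed into the target form $c_1 e^{-c_2 s\varepsilon^d}$ uniformly in $\varepsilon$. The multi-scale choice of radii, the annulus-dependent witness enlargement from the first displayed inclusion, and the outward-shift construction near $\partial B^d(x,j\varepsilon)$ (for which the hypothesis $B^d(x,2(j+1)\varepsilon)\subset W$ is exactly what is needed) are the ingredients that make the union bound telescope to the required estimate.
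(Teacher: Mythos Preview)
Your approach works but is considerably more involved than the paper's. The paper bypasses any covering by applying the Mecke formula: the probability in question is bounded by
\[
\mathbb{E}\Bigg[\sum_{y\in\eta_s\setminus B^d(x,j\varepsilon)}\mathbbm{1}\{\eta_s(A_{j,\varepsilon}(x,y))\leq k-1\}\Bigg]
\leq s\int_{\mathbb{R}^d\setminus B^d(x,j\varepsilon)}\mathbb{P}(\eta_s(A_{j,\varepsilon}(x,y))\leq k-1)\,\mathrm{d}y,
\]
after which a direct volume estimate $\lambda_d(A_{j,\varepsilon}(x,y))\geq c(\lVert x-y\rVert-(j-1)\varepsilon)^d$ (using the hypothesis $B^d(x,2(j+1)\varepsilon)\subset W$ for $y$ with $\lVert x-y\rVert\leq(j+1)\varepsilon$ and \cite[Lemma~7.4]{LPS16} for more distant $y$), the Poisson tail bound, and a one-variable radial integration finish the job. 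Your multi-scale cover, deterministic witness regions, and layered union bound are all devices to cope with the \emph{random} location of $y$; Mecke dispenses with this in one line, so the obstacle you flag (too many cover balls at a single scale) never arises. A few small points in your version would need tightening: since the Poisson point $y$ itself lies in $A_\ell^{(m)}$, the implication actually gives $\eta_s(A_\ell^{(m)})\leq k$ rather than $\leq k-1$; your covering-number exponent should be $(m+1)^{d-1}$ rather than $(m+1)^d$ (harmless, since $N_m\leq C_{j,d}$ anyway); and for $j=1$ with $m\in\{1,2\}$ your outward-shifted witness ball is not guaranteed to sit inside $B^d(x,2(j+1)\varepsilon)$, so those scales need a separate (easy) patch. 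None of this is fatal, but the Mecke argument is markedly shorter and avoids all such case-work.
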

		\begin{proof}
			Let $x\in W$ with $B^d(x,2(j+1)\varepsilon)\subset W$. Then, for $y\in W$ with $j\varepsilon< \lVert x-y\rVert\leq(j+1)\varepsilon$ we have that $B^d(y,\rVert x-y\lVert)\subset W$. Therefore, since $y\notin B^d(x,j\varepsilon)$,
			\begin{align*}
				\lambda_d(A_{j,\varepsilon}(x,y))\geq \frac{1}{2}\kappa_{d}(\lVert x-y\rVert-(j-1)\varepsilon)^d. 
			\end{align*}
			For $y\in W$ with $\lVert x-y\rVert> (j+1)\varepsilon$ it holds that $\lVert x-y\rVert-j\varepsilon\geq \frac{1}{2}(\lVert x-y\rVert -(j-1)\varepsilon)$. Moreover, $(B^d(y,\lVert x-y\rVert-j\varepsilon)\cap W)\backslash\{y\}\subset A_{j,\varepsilon}(x,y)$. Hence, with \cite[Lemma 7.4]{LPS16} there is a constant $c_W>0$ only depending on $W$ such that
			\begin{align*}
				\lambda_d(A_{j,\varepsilon}(x,y))&\geq \lambda_d(B^d(y,\lVert x-y\rVert-j\varepsilon)\cap W)\geq c_W(\lVert x-y\rVert-j\varepsilon)^d\\&\geq c_W\Big(\frac{1}{2}(\lVert x-y\rVert -(j-1)\varepsilon)\Big)^d.
			\end{align*}
			Altogether, for $y\in W\backslash B^d(x,j\varepsilon)$ it follows
			\begin{align*}
				&\lambda_d(A_{j,\varepsilon}(x,y))\geq c(\lVert x-y\rVert-(j-1)\varepsilon)^d
			\end{align*}
			for some constant $c>0$.
			For $t\in\mathbb{N}_0$ there exist constants $\tilde{c}_1,\tilde{c}_2>0$ such that $z^te^{-z}\leq \tilde{c}_1e^{-\tilde{c}_2z}$ for all $z>0$.
			Hence, using the Mecke formula and spherical coordinates, we get 
			\begin{align*}
			&\mathbb{P}(\exists y\in \eta_s\backslash B^d(x, j\varepsilon):\eta_s(A_{j,\varepsilon}(x,y))\leq k-1)\\
			&\leq \mathbb{E}\Bigg[\sum_{y\in\eta_{s}\backslash B^d(x, j\varepsilon)}\mathbbm{1}\{\eta_s(A_{j,\varepsilon}(x,y))\leq k-1\}\Bigg]\\
			&\leq s\int_{\mathbb{R}^d\backslash B^d(x,j\varepsilon)}\mathbb{P}(\eta_s(A_{j,\varepsilon}(x,y))\leq k-1)\;\mathrm{d}y\\
			&=s\int_{\mathbb{R}^d\backslash B^d(x,j\varepsilon)}\sum_{i=0}^{k-1}\frac{\lambda(A_{j,\varepsilon}(x,y))^i}{i!}e^{-\lambda(A_{j,\varepsilon}(x,y))}\;\mathrm{d}y\\
			&\leq s\int_{\mathbb{R}^d\backslash B^d(x,j\varepsilon)}\hat{c}_1e^{-\hat{c}_2\lambda(A_{j,\varepsilon}(x,y))}\;\mathrm{d}y\\
			&\leq  \int_{\mathbb{R}^d\backslash B^d(x,j\varepsilon)}\hat{c}_1se^{-\hat{c}_2sc(\lVert x-y\rVert-(j-1)\varepsilon)^d}\;\mathrm{d}y\\
			&=d\kappa_d\int_{\varepsilon}^\infty \hat{c}_1s(r+(j-1)\varepsilon)^{d-1}e^{-\hat{c}_2scr^d}\;\mathrm{d}r\leq c_1e^{-sc_2\varepsilon^d}
			\end{align*}
			for suitable constants $\hat{c}_1,\hat{c}_2,c_1,c_2>0$.
		\end{proof}
		\begin{proof}[Proof of Theorem \ref{theorem:knn_edge_length}]
			Let $e_i$ denote the $d$-dimensional standard unit vector in the $i$-th direction. For $\varepsilon>0$, $x\in W$ with $B^d(x,4\varepsilon)\subset W$ and $\hat{x}=x+\frac{3}{4}\varepsilon e_1$, we consider configurations where $\eta_s(B^d(\hat{x},\varepsilon/4))=k+1$, $\eta_s(B^d(x,\varepsilon)\backslash B^d(\hat{x},\varepsilon/4))=0$ and $\eta_s(A_{1,\varepsilon}(x,y))\geq k$ for all $y\in\eta_s\backslash B^d(x,\varepsilon)$, where $A_{1,\varepsilon}(x,y)$ is defined as in Lemma \ref{lemma_By}. Then, for $q\geq 0$ the difference operator of $F_q$ is given by
			\begin{align*}
				D_xF_q&=s^{q/d}\sum_{y\in N(x,\eta_s\cup\{x\})}\lVert x-y\rVert^q.
			\end{align*}
			Inserting $j=1$ in Lemma \ref{lemma_By} provides
		\begin{align*}
			\mathbb{P}(\exists y\in \eta_s\backslash B^d(x, \varepsilon):\eta_s(A_{1,\varepsilon}(x,y))\leq k-1)\leq c_1e^{-sc_2\varepsilon^d}
		\end{align*}
		for some constants $c_1,c_2>0$.
		
			Now, let $m=\mathrm{argmax}_{i\in\{1,\dots,n\}:\alpha_i\neq 0}q_i$ and assume without loss of generality $\alpha_m>0$.
			If $\alpha_{i}\geq 0$ for all $i\in\{1,\dots,n\}$, we choose  $\varepsilon=\bar{c}s^{-1/d}$ with $\bar{c}\geq 1$ large enough such that we have for the configurations mentioned above
			\begin{align*}
				D_x\sum_{i=1}^n\alpha_iF_{q_i}\geq \alpha_ms^{q_m/d}\sum_{y\in N(x,\eta_s\cup\{x\})}\lVert x-y\rVert^{q_m}\geq \alpha_mk\left(\frac{s^{1/d}\varepsilon}{2}\right)^{q_m}\geq 1
			\end{align*}
			and $c_1e^{-sc_2\varepsilon^d}<\frac{1}{2}$.
			Otherwise, let $\ell=\mathrm{argmax}_{i\in\{1,\dots,n\}:\alpha_i<0}q_i$. Then, $q_m>q_\ell$ and it follows for the configurations mentioned above for $s^{1/d}\varepsilon\geq1$,
			\begin{align*}
				&D_x\sum_{i=1}^n\alpha_iF_{q_i}=\sum_{i=1}^n\alpha_is^{q_i/d}\sum_{y\in N(x,\eta_s\cup\{x\})}\lVert x-y\rVert^{q_i}\\
				&\geq \alpha_ms^{q_m/d}\sum_{y\in N(x,\eta_s\cup\{x\})}\lVert x-y\rVert^{q_m}-\sum\limits_{\substack{i\in \{1,\dots,n\}: \\ \alpha_i<0}}(-\alpha_i)s^{q_i/d}\sum_{y\in N(x,\eta_s\cup\{x\})}\lVert x-y\rVert^{q_i}\\
				&\geq \alpha_m\sum_{y\in N(x,\eta_s\cup\{x\})}\left(s^{1/d}\lVert x-y\rVert\right)^{q_m}-\sum\limits_{\substack{i\in \{1,\dots,n\}: \\ \alpha_i<0}}(-\alpha_i)\sum_{y\in N(x,\eta_s\cup\{x\})}(s^{1/d}\varepsilon)^{q_i}\\
				&\geq \alpha_mk\left(\frac{s^{1/d}\varepsilon}{2}\right)^{q_m}-\sum\limits_{\substack{i\in \{1,\dots,n\}: \\ \alpha_i<0}}(-\alpha_i)k(s^{1/d}\varepsilon)^{q_\ell}\\
				&\geq k(s^{1/d}\varepsilon)^{q_\ell}\Bigg(\alpha_m\frac{1}{2^{q_m}}\left(s^{1/d}\varepsilon\right)^{q_m-q_\ell}-\sum\limits_{\substack{i\in \{1,\dots,n\}: \\ \alpha_i<0}}(-\alpha_i)\Bigg).
			\end{align*}
			In this case, choose $\varepsilon= s^{-1/d}\bar{c}>0$ with $\bar{c}\geq 1$ large enough such that $c_1e^{-sc_2\varepsilon^d}<\frac{1}{2}$ and
			\begin{align*}
				D_x\sum_{i=1}^n\alpha_iF_{q_i}\geq\alpha_m\frac{1}{2^{q_m}}\left(s^{1/d}\varepsilon\right)^{q_m-q_\ell}-\sum\limits_{\substack{i\in \{1,\dots,n\}: \\ \alpha_i<0}}(-\alpha_i)\geq 1.
			\end{align*} 
			Let $A_s=\{x\in W:B^d(x,4\varepsilon)\subset W\}$. Due to the independence of $\eta_{s}(B^d(\hat{x},\varepsilon/4))$, $\eta_{s}(B^d(x,\varepsilon)\backslash B^d(\hat{x},\varepsilon/4))$ and $\eta_{s}(A_{1,\varepsilon}(x,y))$ for $y\in\eta_s\backslash B^d(x,\varepsilon)$ and $x\in A_s$ and by Lemma \ref{lemma_By} we have for $s$ large enough such that $\lambda_d(A_s)\geq\frac{\lambda_d(W)}{2}$,
			\begin{align*}
				&\mathbb{E}\Big[\int_W\Big(D_x\sum_{i=1}^n\alpha_iF_{q_i}\Big)^2\;\mathrm{d}\lambda(x)\Big]\geq s	\int_W\mathbb{P}\Big(D_x\sum_{i=1}^n\alpha_iF_{q_i}\geq 1 \Big)\;\mathrm{d}x\\
				&\geq  s	\int_W\mathbb{P}\big(\eta_s(B^d(\hat{x},\varepsilon/4))=k+1, \eta_s(B^d(x,\varepsilon)\backslash B^d(\hat{x},\varepsilon/4))=0,\\&\;\;\;\;\;\;\;\;\;\;\;\;\;\;\;\;\;\eta_s(A_{1,\varepsilon}(x,y))\geq k \;\forall y\in\eta_s\backslash B^d(x,\varepsilon)\big)\;\mathrm{d}x\\
				&\geq s	\int_{A_s}\frac{(s\kappa_d\varepsilon^d)^{k+1}}{4^{d(k+1)}(k+1)!}e^{-s\kappa_d\varepsilon^d/4^d}e^{-s\kappa_d\varepsilon^d(1-1/4^d)}(1-c_1e^{-sc_2\varepsilon^d})\;\mathrm{d}x\\
				&\geq  s	\frac{\lambda_d(W)}{2}\frac{(\kappa_d\bar{c}^d)^{k+1}}{4^{d(k+1)}(k+1)!}e^{-\kappa_d\bar{c}^d}\cdot\frac{1}{2} =:c_{q,\alpha,k,W,d}s.
			\end{align*}
			Our functionals can be written as sums of scores as in \eqref{eq:sum_of_scores}. For $y\in\eta_s$, $q\geq0$ and $s\geq 1$ the corresponding score of $F_q$ is given by
			\begin{align*}
				\xi_s(y,\eta_s)=\sum_{z\in N(y,\eta_s)}\mathbbm{1}\{y\in N(z,\eta_s)\}\frac{\Vert y-z\rVert^q}{2}+\mathbbm{1}\{y\notin N(z,\eta_s)\}\Vert y-z\rVert^q.
			\end{align*} 
			The scores $(\xi_s)_{s\geq 1}$ fulfil a $(4+p)$-th moment condition (see the proof of \cite[Theorem 3.1]{LSY19}) and are
			by \cite[proof of Theorem 3.1]{SY21} exponentially stabilising. Therefore, we can apply Lemma \ref{lemma:second_difference_operator}, which completes together with Theorem \ref{thm:varbound} the proof.
		\end{proof}
		In the following we consider a second statistic of $k$-nearest neighbour graphs, namely the number of vertices with a given degree. Similarly to the previous example, it was shown in \cite[Theorem 3.3]{SY21} that a vector of these degree counts fulfils a quantitative multivariate central limit theorem in $d_2$- and $d_{convex}$-distance if its asymptotic covariance matrix is positive definite. 
		
		For $j\in\mathbb{N}_0$ let $V_j^{k}$ denote the number of vertices of degree $j$ in the $k$-nearest neighbour graph generated by $\eta_s$, i.e.\
	\begin{align*}
		V_j^{k}=\sum_{y\in\eta_s}\mathbbm{1}\{\mathrm{deg}(y,\eta_s)=j\}.
	\end{align*}
		We study the vector $(V_{j_1}^k,\dots, V_{j_n}^k)$ for distinct $j_i\geq k$, $i\in\{1,\dots,n\}$.  By \cite[Lemma 8.4]{Y98} the vertices of a $k$-nearest neighbour graph have bounded degree. Therefore, we consider $j_i\in \{k,k+1,\dots,k_{\mathrm{max}}\}$ for $i\in\{1,\dots,n\}$, where $k_{\mathrm{max}}$ denotes the maximal possible degree that occurs with a positive probability.
		\begin{theorem}
		
			For $d\geq 2$, $n\leq k_\mathrm{max}-k+1$ and $s\to\infty$ the asymptotic covariance matrix of $\frac{1}{\sqrt{s}}(V_{j_1}^k,\dots, V_{j_n}^k)$ for distinct $j_i\in \{k,k+1,\dots,k_{\mathrm{max}}\}$, $i\in\{1,\dots,n\}$, is positive definite, i.e.\ for any $\alpha=(\alpha_1,\dots\alpha_n)\in\mathbb{R}^n\backslash\{0\}$ there exists a constant $c>0$ such that for $s$ sufficiently large
			\begin{align*}
				\mathrm{Var}\left[\sum_{i=1}^{n}\alpha_iV_{j_i}^{k}\right]\geq cs.
			\end{align*}
			
		\end{theorem}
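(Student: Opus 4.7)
The plan is to follow the pattern of Theorem \ref{theorem:random_geometric_graph}(a) and apply Theorem \ref{thm:varbound} to the Poisson functional $F_s = \sum_{i=1}^n \alpha_i V_{j_i}^k$. The score $\xi_s(y,\eta_s) = \mathbbm{1}\{\mathrm{deg}(y,\eta_s) = j\}$ associated with $V_j^k$ is bounded by one, hence trivially satisfies a $(4+p)$-th moment condition for every $p>0$, and is exponentially stabilising by \cite[proof of Theorem 3.3]{SY21}. Consequently Lemma \ref{lemma:second_difference_operator} applies and yields the required upper bound for $\mathbb{E}\int_W\int_W (D_{x,y}^2 F_s)^2\,\mathrm{d}\lambda^2(x,y)$, so condition \eqref{condition} of Theorem \ref{thm:varbound} holds. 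It remains to establish the matching lower bound $\mathbb{E}\int_W (D_x F_s)^2\,\mathrm{d}\lambda(x) \geq cs$.

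Let $m = \mathrm{argmax}_{i \in \{1,\ldots,n\}: \alpha_i \neq 0}\, j_i$; by this choice, $j_m + 1$ is not among $\{j_i : \alpha_i \neq 0\}$. With $\varepsilon = \bar c s^{-1/d}$ for $\bar c \geq 1$ large and $A_s = \{x \in W : B^d(x, C\varepsilon) \subset W\}$ for a suitably large $C>0$, I would construct, for each $x \in A_s$, a local event $\mathcal{E}_x$ depending only on $\eta_s$ restricted to $B^d(x, C\varepsilon)$ together with an annular stabilisation condition outside, satisfying: (i) $\mathbb{P}(\mathcal{E}_x) \geq c_0 > 0$ uniformly in $s$ and $x$; (ii) on $\mathcal{E}_x$, Lemma \ref{lemma_By} (with a suitable $j$) guarantees that no point of $\eta_s$ outside $B^d(x, C\varepsilon)$ has a $k$-nearest-neighbour ball reaching into $B^d(x, C\varepsilon)$, so the effect of inserting $x$ is confined to the local cluster; and (iii) on $\mathcal{E}_x$, insertion of $x$ modifies only $V_{j_m}^k$ and $V_{j_m+1}^k$, the latter being irrelevant to $F_s$ by the choice of $m$, and changes $V_{j_m}^k$ by a nonzero deterministic integer. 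Given such events the computation proceeds exactly as at the end of the proof of Theorem \ref{theorem:knn_edge_length}:
\begin{equation*}
\mathbb{E}\int_W (D_x F_s)^2\,\mathrm{d}\lambda(x) \geq s\alpha_m^2 \int_{A_s} \mathbb{P}(\mathcal{E}_x)\,\mathrm{d}x \geq cs
\end{equation*}
for $s$ large enough, and Theorem \ref{thm:varbound} then closes the argument.

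The main obstacle is the explicit construction of $\mathcal{E}_x$ fulfilling property (iii) uniformly over $j_m \in \{k, \ldots, k_{\max}\}$. In the $k$-nearest neighbour graph the insertion of a single point can simultaneously alter the neighbour sets of many existing vertices, so the local geometry has to be engineered with care. A natural template is to place a tight cluster of exactly $j_m + 1$ points inside a tiny sub-ball of $B^d(x, C\varepsilon)$, chosen so that every cluster vertex has degree $j_m$ before insertion and degree $j_m + 1$ after the insertion of $x$, while vertices outside the cluster remain unaffected by virtue of property (ii). Verifying that such a configuration exists with positive Lebesgue density for every admissible $j_m$ is where the assumption $d \geq 2$ becomes essential, as it supplies the angular room required to realise all intermediate vertex degrees between $k$ and $k_{\max}$; the hypothesis $n \leq k_{\max} - k + 1$ is merely the consistency requirement that at most $k_{\max} - k + 1$ distinct degree values occur with positive probability. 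Once $\mathcal{E}_x$ is described for each $j_m$, its Poisson probability is bounded below by the product of the probability of the prescribed point-count pattern in the local cells and the annular stabilisation estimate from Lemma \ref{lemma_By}, both of which are uniformly positive in $s$.
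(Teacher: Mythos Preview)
Your overall strategy---apply Theorem~\ref{thm:varbound}, obtain the second-order bound from Lemma~\ref{lemma:second_difference_operator}, and build a local event isolated via Lemma~\ref{lemma_By}---matches the paper exactly. The gap is the choice of $m$ and the resulting local construction. You take $m=\operatorname*{argmax}$ and ask for a tight cluster of $j_m+1$ points in which every vertex has degree $j_m$, i.e.\ a complete $k$-NN graph on $j_m+1$ vertices. A directed-edge count rules this out whenever $j_m>2k$: each vertex emits exactly $k$ arcs in the directed nearest-neighbour graph and every undirected $k$-NN edge uses at least one arc, so completeness forces $\binom{j_m+1}{2}\le (j_m+1)k$, hence $j_m\le 2k$. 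Since $k_{\max}$ can exceed $2k$ (already $k=1$, $d=2$ gives $k_{\max}\ge 5$ via a regular pentagon about a centre point), your template simply does not exist for any $\alpha$ whose largest active index lies in $(2k,k_{\max}]$; the companion requirement that cluster vertices reach degree $j_m+1$ after insertion is separately impossible when $j_m=k_{\max}$. So the obstacle you flag is not that the construction is delicate---it is that the proposed template cannot exist in general.

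The paper instead takes $m=\operatorname*{argmin}_{i:\alpha_i\ne 0} j_i$. With only $j_m$ points in the local ball, all local degrees are at most $j_m-1$ before insertion and at most $j_m$ after; since $j_m$ is now the \emph{smallest} active index, no other $V_{j_i}^k$ with $\alpha_i\neq 0$ is touched, and one only needs $\deg(x,\eta_s\cup\{x\})=j_m$ with positive probability, which is precisely how the range $\{k,\dots,k_{\max}\}$ is characterised. This handles $j_m>k$ directly. For the boundary case $j_m=k$ the $j_m=k$ cluster points no longer fill each other's $k$-nearest lists, so the cluster is not self-contained; the paper therefore switches to a cluster of $k+1$ points offset from $x$ and carries out a further subcase analysis (four explicit configurations, depending on $k$ and on whether $j_\ell=k+1$ is also active with a coefficient making the first construction degenerate).
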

		\begin{proof}
			First note that the degrees $j_1,\dots,j_n$ are chosen in such a way that they can occur in a $k$-nearest neighbour graph. A vertex can have $k$ neighbours if it is only connected to its $k$ nearest neighbours and can have up to $k_{\mathrm{max}}$ neighbours by the definition of $k_{\mathrm{max}}$. All degrees in between can occur as well as can be seen from the following construction. Assume we have a configuration where $x$ has $k_{\mathrm{max}}$ neighbours. Then we delete $1\leq t\leq k_{\mathrm{max}}-k$ vertices which are connected to $x$ but are not one of the $k$ nearest neighbours of $x$ and all other vertices that are not connected to $x$. Consequently, we obtain a configuration where $x$ has degree $k_{\mathrm{max}}-t$.
			This means that $\mathbb{P}(\mathrm{deg}(x,\beta_{j_i}\cup\{x\})=j_i)>0$ for $i\in\{1,\dots,n\}$, where $\beta_{j_i}$ denotes a binomial point process of $j_i$ independent random points uniformly distributed in $B^d(0,1)$. Obviously, these probabilities do not change if we take a binomial point process on any other ball.
			
			The difference operator of $V_j^{k}$ is given by
			\begin{align*}
				D_xV_j^{k}=\mathbbm{1}\{\mathrm{deg}(x,\eta_s\cup\{x\})=j\}+\sum_{y\in\eta_s}(\mathbbm{1}\{\mathrm{deg}(y,\eta_s\cup\{x\})=j\}-\mathbbm{1}\{\mathrm{deg}(y,\eta_s)=j\})
			\end{align*}
			for $x\in W$.
			Denote $I=\{i\in\{1,\dots,n\}:\alpha_i\neq0\}$ and $m=\mathrm{argmin}_{i\in I}j_i$. We can assume $\alpha_m>0$ without loss of generality. In the following we distinct several cases that are illustrated in Figure \ref{fig:kNN}.
			
			\medskip
			
			\noindent \textit{Case 1: $j_m>k$}\\
			Let $\varepsilon>0$ and $x\in W$ with $B^d(x, 8\varepsilon)\subset W$.
			We consider configurations where $\eta_s(B^d(x,\varepsilon))=j_m,\eta_s(B^d(x,3\varepsilon)\backslash B^d(x,\varepsilon))=0$ and $\eta_s(A_{3,\varepsilon}(x,y))\geq k$ for all $y\in\eta_s\backslash B^d(x,3\varepsilon)$ with $A_{3,\varepsilon}(x,y)$ defined as in Lemma \ref{lemma_By}. Applying
			Lemma \ref{lemma_By} for $j=3$ provides
			\begin{align*}
				\mathbb{P}(\exists y\in \eta_s\backslash B^d(x, 3\varepsilon):\eta_s(A_{3,\varepsilon}(x,y))\leq k-1)
				&\leq c_1e^{-sc_2\varepsilon^d}.
			\end{align*}
			Now, choose $\varepsilon=\bar{c} s^{-1/d}>0$ for $\bar{c}>1$ such that $c_1e^{-sc_2\varepsilon^d}\leq\frac{1}{2}$.
			Then, $x$ is connected to all $z\in \eta_s\cap B^d(x,\varepsilon)$ and we have
			\begin{align*}
				D_x\sum_{i=1}^n\alpha_iV_{j_i}^k\geq\alpha_m.
			\end{align*}
			Let $A_s=\{x\in W:B^d(x,8\varepsilon)\subset W\}$ and $s$ large enough such that $\lambda_d(A_s)>\frac{\lambda_d(W)}{2}$. Then, using independence properties we have for $p_m=\mathbb{P}\left(\mathrm{deg}(x,\beta_{j_m}\cup\{x\})=j_m\right)>0$,
			\allowdisplaybreaks
			\begin{align*}
				&\mathbb{E}\Big[\int_W \Big(D_x\sum_{i=1}^{n}\alpha_iV_{j_i}^{k}\Big)^2\;\mathrm{d}\lambda(x)\Big]\geq \alpha_m^2\int_W\mathbb{P}\Big(D_x\sum_{i=1}^{n}\alpha_iV_{j_i}^{k}\geq\alpha_m\Big)\;\mathrm{d}\lambda(x)\\
				&\geq \alpha_m^2\int_{A_s}\mathbb{P}\left(\eta_s(B^d(x,\varepsilon))=j_m,\eta_s(B^d(x,3\varepsilon)\backslash B^d(x,\varepsilon))=0,\mathrm{deg}(x,\eta_s|_{B^d(x,\varepsilon)}\cup\{x\})=j_m\right)\\
				&\;\;\;\;\;\;\;\;\;\;\;\;\;\cdot\mathbb{P}\left(\eta_s(A_{3,\varepsilon}(x,y))\geq k\; \forall y\in\eta_s\backslash B^d(x,3\varepsilon)\right)\;\mathrm{d}\lambda(x)\\
				&\geq s\frac{\alpha_m^2}{2} \int_{A_s}\mathbb{P}\left(\eta_s(B^d(x,\varepsilon))=j_m,\eta_s(B^d(x,3\varepsilon)\backslash B^d(x,\varepsilon))=0\right)\\&\;\;\;\;\;\;\;\;\;\;\;\;\;\;\;\;\;\;\;\cdot\mathbb{P}\left(\mathrm{deg}(x,\eta_s|_{B^d(x,\varepsilon)}\cup\{x\})=j_m|\eta_s(B^d(x,\varepsilon))=j_m\right)\;\mathrm{d}x\\
				&=s\frac{\alpha_m^2}{2}\int_{A_s}\frac{(s\kappa_d\varepsilon^d)^{j_m}}{j_m!}e^{-s\kappa_d\varepsilon^d}e^{-s\kappa_d(3^d-1)\varepsilon^d}\mathbb{P}\left(\mathrm{deg}(x,\beta_{j_m}\cup\{x\})=j_m\right)\;\mathrm{d}x\\
				&\geq s\frac{\alpha_m^2}{2}\frac{(s\kappa_d\varepsilon^d)^{j_m}}{j_m!}e^{-s\kappa_d3^d\varepsilon^d}p_m\frac{\lambda_d(W)}{2}=:c_{\alpha,k,W,d}s.
			\end{align*}
			\textit{Case 2: $j_m=k$.}\\
			If it exists, we denote by $\ell\in\{1,\dots,n\}$ the index with $j_\ell=k+1$. Then,\begin{align*}
				\hat{\alpha}=\begin{cases}
					\alpha_{\ell}, &\text{ if $\ell$ exists,}\\
					0, &\text{ if $\ell$ does not exist.}
				\end{cases}
			\end{align*}
			Let $\varepsilon>0$ and let $x\in W$ be such that $B^d(x,8\varepsilon)\subset W$. We consider four different configurations to deal with all possible vectors $\alpha=(\alpha_1,\dots,\alpha_n)\in\mathbb{R}^n\backslash\{0\}$ (see Figure \ref{fig:kNN}). Let $e_i$ denote the $d$-dimensional standard unit vector in the $i$-th direction.
			\begin{enumerate}
				\item 	\textit{$k\in\mathbb{N}$ and $\alpha_m(1-k)+\hat{\alpha} k\neq 0$}:\\ 
				In this case we consider the event $S_1$ where for $\hat{x}=x+\frac{3\varepsilon}{4} e_1$ we have $\eta_s(B^d(\hat{x},\varepsilon/4))=k+1$, $\eta_s(B^d(x,3\varepsilon)\backslash B^d(\hat{x},\varepsilon/4))=0$ and $\eta_s(A_{3,\varepsilon}(x,y))\geq k$ for all $y\in\eta_s\backslash B^d(x,3\varepsilon)$. Then it follows
				\begin{align*}
					D_x\sum_{i=1}^n\alpha_iV_{j_i}^k=\alpha_m D_xV_{k}^k+\hat{\alpha}D_xV_{k+1}^k=\alpha_m(1-k)+\hat{\alpha} k\neq 0.
				\end{align*}
				\item \textit{$k\geq 3$ and $\alpha_m(1-k)+\hat{\alpha} k= 0$}:\\
				The condition $\alpha_m(1-k)+\hat{\alpha} k= 0$ implies
				\begin{align*}
					\alpha_m(3-k)+\hat{\alpha}(k-2)=2(\alpha_m-\hat{\alpha})=2\frac{\alpha_m}{k}\neq 0.
				\end{align*}
				We consider the event $S_2$ where  $\eta_s(B^d(\hat{x}_i,\varepsilon/16))=1$ for $i\in\{1,\dots,4\}$ with $\hat{x}_j=x+(-1)^j\frac{3\varepsilon}{4}e_1$ for $j\in\{1,2\}$ and $\hat{x}_j=x+(-1)^j\frac{3\varepsilon}{4}e_2$ for $j\in\{3,4\}$, $\eta_s(B^d(x,\varepsilon/4))=k-3$, $\eta_s(B^d(x,3\varepsilon)\backslash (B^d(x,\varepsilon/4)\cup \bigcup_{i=1}^4B^d(\hat{x}_i,\varepsilon/16)))=0$ and $\eta_s(A_{3,\varepsilon}(x,y))\geq k$ for all $y\in\eta_s\backslash B^d(x,3\varepsilon)$. Then we have
				\begin{align*}
					D_x\sum_{i=1}^n\alpha_iV_{j_i}^k=\alpha_m D_xV_{k}^k+\alpha_{\ell}D_xV_{k+1}^k=\alpha_m(3-k)+\hat{\alpha}(k-2)\neq 0.
				\end{align*}
				\item \textit{$k=2$ and $\alpha_m(1-k)+\hat{\alpha} k= 0$:}\\
				In this case we use the event $S_3$ where $\eta_s(B^d(\hat{x}_i,\varepsilon/16))=1$ for $i\in\{1,2,3\}$ with $\hat{x}_j=x+\frac{7\varepsilon}{16}e_1+(-1)^j\frac{7\varepsilon}{16}e_2$ for $j\in\{1,2\}$ and $\hat{x}_3=x+\frac{7\varepsilon}{8}e_1$. Additionally, we assume $\eta_s(B^d(x,3\varepsilon)\backslash ( \bigcup_{i=1}^3B^d(\hat{x}_i,\varepsilon/16)))=0$ and $\eta_s(A_{3,\varepsilon}(x,y))\geq k$ for all $y\in\eta_s\backslash B^d(x,3\varepsilon)$. Hence,
				\begin{align*}
					D_x\sum_{i=1}^n\alpha_iV_{j_i}^k=\alpha_m D_xV_{k}^k=\alpha_m\neq 0.
				\end{align*}
				\item \textit{$k=1$  and $\alpha_m(1-k)+\hat{\alpha} k= 0$:}\\
				We look at the event $S_4$ where  
				$\eta_s(B^d(\hat{x}_1,\varepsilon/4))=1$ for $\hat{x}_1=x-\frac{\varepsilon}{4}e_1$, $\eta_s(B^d(\hat{x}_2,\varepsilon/4))=2$ for $\hat{x}_2=x+\frac{3\varepsilon}{4}e_1$, $\eta_s(B^d(x,3\varepsilon)\backslash ( \bigcup_{i=1}^2B^d(\hat{x}_i,\varepsilon/4)))=0$ and $\eta_s(A_{3,\varepsilon}(x,y))\geq k$ for all $y\in\eta_s\backslash B^d(x,3\varepsilon)$. Since $\hat{\alpha}=0$, it follows
				\begin{align*}
					D_x\sum_{i=1}^n\alpha_iV_{j_i}^k=2\alpha_m\neq 0.
				\end{align*}
			\end{enumerate}
		\begin{figure}[h]
			\centering
			\label{key}
				\begin{tikzpicture}
				\draw(-80pt,0pt)--(320pt,0pt);
				\draw(-80pt,80pt)--(320pt,80pt);
				\draw(-80pt,0pt)--(-80pt,80pt);
				\draw(0pt,0pt)--(0pt,80pt);
				\draw(80pt,0pt)--(80pt,80pt);
				\draw(160pt,0pt)--(160pt,80pt);
				\draw(240pt,0pt)--(240pt,80pt);
				\draw(320pt,0pt)--(320pt,80pt);
				\draw (-40pt,-10pt) node{Case 1};
				\draw (40pt,-10pt) node{Case 2.1};
				\draw (120pt,-10pt) node{Case 2.2};
				\draw (200pt,-10pt) node{Case 2.3};
				\draw (280pt,-10pt) node{Case 2.4};
				\draw (-40pt,40pt) circle (36pt);
				\filldraw[red] (-40pt,40pt) circle (1pt);
				\draw[red] (-34.5pt,40pt) node{$x$};
				\draw (-40pt,40pt) -- (-19.4pt,12.2pt);
				\draw (-22pt,25pt) node{$\varepsilon$};
				\filldraw[green] (-36.7pt,30pt) circle (1pt);
				\filldraw[green] (-67pt,50pt) circle (1pt);
				\filldraw[green] (-62pt,28pt) circle (1pt);
				\filldraw[green] (-20pt,62pt) circle (1pt);
				\filldraw[green] (-45pt,65pt) circle (1pt);
				\filldraw[green] (-53pt,55pt) circle (1pt);
				\filldraw[green] (-13pt,45pt) circle (1pt);
				\filldraw[green] (-45pt,15pt) circle (1pt);
				\draw[->](-70pt,10pt) -- (-60pt,20pt);
				\draw[green] (-71pt,7pt) node{\small $j_m$};
				\draw (40pt,40pt) circle (36pt);
				\draw (40pt,67pt) circle (9pt);
				\filldraw[red] (40pt,40pt) circle (1pt);
				\draw[red] (45.5pt,40pt) node{$x$};
				\draw (40pt,40pt) -- (61.6pt,12.2pt);
				\draw (58pt,25pt) node{$\varepsilon$};
				\filldraw[green] (44pt,72pt) circle (1pt);
				\filldraw[green] (36pt,70pt) circle (1pt);
				\filldraw[green] (41pt,63pt) circle (1pt);
				\draw[->](26pt,52pt) -- (38pt,66pt);
				\draw[green] (26pt,48pt) node{\small $k+1$};
				\draw (120pt,40pt) circle (36pt);
				\draw (120pt,40pt) circle (9pt);
				\draw (120pt,67pt) circle (2.25pt);
				\draw (120pt,13pt) circle (2.25pt);
				\draw (147pt,40pt) circle (2.25pt);
				\draw (93pt,40pt) circle (2.25pt);
				\filldraw[red] (120pt,40pt) circle (1pt);
				\draw[red] (125.5pt,40pt) node{$x$};
				\draw (120pt,40pt) -- (141.6pt,12.2pt);
				\draw (138pt,25pt) node{$\varepsilon$};
				\filldraw[green] (120pt,67pt) circle (1pt);
				\filldraw[green] (120pt,13pt) circle (1pt);
				\filldraw[green] (93pt,40pt) circle (1pt);
				\filldraw[green] (147pt,40pt) circle (1pt);
				\filldraw[green] (119pt,43pt) circle (1pt);
				\filldraw[green] (116pt,39pt) circle (1pt);
				\filldraw[green] (120pt,36pt) circle (1pt);
				\draw[->](104pt,53pt) -- (116pt,41pt);
				\draw[green] (104pt,55pt) node{\small $k-3$};
				\draw (200pt,40pt) circle (36pt);
				\draw (215.75pt,55.75pt) circle (2.25pt);
				\draw (200pt,71.5pt) circle (2.25pt);
				\draw (184.25pt,55.75pt) circle (2.25pt);
				\filldraw[red] (200pt,40pt) circle (1pt);
				\draw[red] (205.5pt,40pt) node{$x$};
				\draw (200pt,40pt) -- (221.6pt,12.2pt);
				\draw (218pt,25pt) node{$\varepsilon$};
				\filldraw[green] (215.75pt,55.75pt) circle (1pt);
				\filldraw[green] (200pt,71.5pt) circle (1pt);
				\filldraw[green] (184.25pt,55.75pt) circle (1pt);
				\draw (280pt,40pt) circle (36pt);
				\draw (280pt,31pt) circle (9pt);
				\draw (280pt,67pt) circle (9pt);
				\filldraw[red] (280pt,40pt) circle (1pt);
				\draw[red] (285.5pt,40pt) node{$x$};
				\draw (280pt,40pt) -- (301.6pt,12.2pt);
				\draw (298pt,25pt) node{$\varepsilon$};
				\filldraw[green] (278pt,71pt) circle (1pt);
				\filldraw[green] (280pt,31pt) circle (1pt);
				\filldraw[green] (283pt,64pt) circle (1pt);
			\end{tikzpicture}
		\caption{Configurations in $B^d(x,\varepsilon)$}\label{fig:kNN}
		\end{figure}
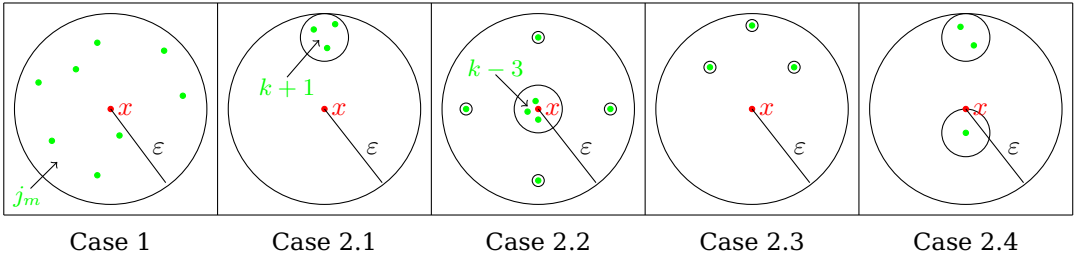
			Let $\varepsilon=\bar{c} s^{-1/d}$ for $\bar{c}>1$ such that $c_1e^{sc_2\varepsilon^d}\leq\frac{1}{2}$. Then, analogously to Case 1, we get $\mathbb{P}(S_u)\geq c_{\alpha,k,d}$ for a constant $c_{\alpha,k,d}>0$ and $u\in\{1,\dots,4\}$. Moreover, let
			\begin{align*}
				c_\alpha=\begin{cases}
					\alpha_m(1-k)+\hat{\alpha} k, &\text{ for }k\in \mathbb{N} \text{ and }\alpha_m(1-k)+\hat{\alpha} k\neq 0,\\
					\alpha_m(3-k)+\hat{\alpha}(k-2), &\text{ for }k\geq 3\text{ and }\alpha_m(1-k)+\hat{\alpha} k=0, \\
					\alpha_m, &\text{ for }k=2 \text{ and }\alpha_m(1-k)+\hat{\alpha} k=0,\\
					2\alpha_{m}, &\text{ for }k=1 \text{ and }\alpha_m(1-k)+\hat{\alpha} k=0.
				\end{cases}
			\end{align*}
			Then, for $A_s=\{x\in W:B^d(x,8\varepsilon)\subset W\}$ and $s$ large enough such that $\lambda_d(A_s)>\frac{\lambda_d(W)}{2}$ it follows for $u\in\{1,\dots,4\}$,
			\begin{align*}
				&\mathbb{E}\left[\int_W \left(D_x\sum_{i=1}^{n}\alpha_iV_{j_i}^{k}\right)^2\;\mathrm{d}\lambda(x)\right]\geq c_\alpha^2\int_{A_s} \mathbb{P}(S_u)\;\mathrm{d}\lambda(x)
				\geq c_{\alpha,k,W,d} s
			\end{align*}
		for a suitable constant $c_{\alpha,k,W,d}>0$.
		
		Our functionals can be written as sums of scores as in \eqref{eq:sum_of_scores}. For $y\in \eta_s$, $j\in\{k,\dots,k_{\mathrm{max}}\}$ and $s\geq 1$ the corresponding score is given by
			\begin{align*}
				\xi_s(y,\eta_s)=\mathbbm{1}\{\mathrm{deg}(y,\eta_s)=j\}.
			\end{align*}
			The scores $(\xi_s)_{s\geq 1}$ clearly fulfil a $(4+p)$-th moment condition and are
			by \cite[proof of Theorem 3.3]{SY21} exponentially stabilising. Therefore, we can apply Lemma \ref{lemma:second_difference_operator}, which completes together with Theorem \ref{thm:varbound} the proof.
		\end{proof}
		
		\begin{rema}\label{rem:inhomogeneous_spatial_random_graphs}
Throughout this section we assume that the underlying Poisson processes have the intensity measures $s\lambda_d|_W$ for $s\geq1$. However, we can generalise our results from these homogeneous Poisson processes to a large class of inhomogeneous Poisson processes. Let $\mu$ be a measure with a density $g: W\to[0,\infty)$ such that $\underline{c}\leq g(x)\leq\overline{c}$ for all $x\in W$ and constants $\underline{c},\overline{c}>0$. All results of this section continue to hold for Poisson processes with intensity measures $s\mu$ for $s\geq 1$. We only have to slightly modify the proofs by bounding the intensity measure by $s \underline{c}\lambda_d|_W$ from below or by $s \overline{c}\lambda_d|_W$ from above depending on whether a lower or an upper bound is required in our estimates. Consequently, some of the constants might change.
\end{rema}
		
		\section{Random Polytopes}\label{sec:random_polytopes}
		The study of the convex hull of random points started with the works \cite{RS63} and \cite{RS64}. In \cite{R05} central limit theorems for the volume and number of $k$-faces as well as variance bounds were shown. Variance asymptotics and central limit theorems for all intrinsic volumes of the convex hull in a ball were derived in \cite{CSY13}. In \cite{LSY19} the rates of convergence for the central limit theorems were further improved.
		
		The $L^p$ surface area measure for a convex body was introduced in \cite{L93}, where the $L^p$ Minkowski problem was described. The Minkowski problem asks for conditions for a Borel measure on the sphere under which this measure is the $L^p$ surface area of a convex body. The discrete $L^p$ Minkowski problem is obtained in the special case, where this convex body is a polytope. This situation can, for example, be found in \cite{HLYZ05} and the references therein. In \cite{HLRT22} the expected $L^p$ surface area of random polytopes was considered as a special case of $T$-functionals of random polytopes.
		
		In this section the two-dimensional vector of $L^p$ surface areas of a random polytope for different $p_1,p_2\in[0,1]$ is considered and lower variance bounds for linear combinations as well as a result on the multivariate normal approximation are derived.
		For $s\geq 1$ let $\eta_s$ be a homogeneous Poisson process on $B^d(0,1)$ with intensity $s$, i.e.\ a Poisson process on $\mathbb{R}^d$ with intensity measure $\lambda=s\lambda_d|_{B^d(0,1)}$, where $\lambda_d|_{B^d(0,1)}$ denotes the restriction of the Lebesgue measure to $B^d(0,1)$. 	
		We consider the random polytope $Q$ generated by $\eta_s\cup\{0\}$, i.e. $Q$ is the convex hull $\mathrm{Conv}(\eta_s\cup\{0\})$. For $p\in[0,1]$ its $L^p$ surface area is given by 
		\begin{align}\label{def l^p surface area}
			A_p = A_p(Q)=\sum_{\text{$F$ facet of $Q$}} \mathrm{dist}(0,F)^{1-p}\lambda_{d-1}(F),
		\end{align}
		where $\mathrm{dist}(0,F)$ stands for the distance of $F$ to the origin $0$ (see for instance \cite[Section 1]{HLRT22}). 
		\begin{theorem}
			\label{theorem:randompoly_var}
			The asymptotic covariance matrix of the vector $s^{(d+3)/(2(d+1))}(A_{p_1},A_{p_2})$ for $p_1,p_2\in [0,1]$ with $p_1\neq p_2$ is positive definite, i.e.\ for any $\alpha=(\alpha_1,\alpha_2)\in\mathbb{R}^2\backslash\{0\}$ there exists a constant $c>0$ such that for $s$ sufficiently large
			\begin{align*}
				\mathrm{Var}[\alpha_1A_{p_1}+\alpha_2A_{p_2}]\geq cs^{-(d+3)/(d+1)}.
			\end{align*}
		\end{theorem}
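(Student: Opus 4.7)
The strategy is to apply Theorem \ref{thm:varbound} to $F_s := \alpha_1 A_{p_1} + \alpha_2 A_{p_2}$, so that one obtains $\mathrm{Var}[F_s] \geq \frac{4}{(\alpha+2)^2}\mathbb{E}[\int (D_x F_s)^2 \, \mathrm{d}\lambda(x)]$. The proof reduces to two ingredients: (i) verifying that the expected squared integral of the second difference operator is bounded by a constant times that of the first difference operator (condition \eqref{condition}) and (ii) bounding the expected integral of the squared first difference operator from below by $c\, s^{-(d+3)/(d+1)}$.

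For ingredient (i), I would proceed exactly as in Lemma \ref{lemma:second_difference_operator}. The $L^p$ surface area can be written as a sum of scores attached to the vertices of $Q$ (each vertex contributes via the facets incident to it, weighted by $\mathrm{dist}(0,F)^{1-p}$), and these scores are exponentially stabilising with stabilisation radius of the boundary-cap scale $s^{-2/(d+1)}$ and satisfy suitable $(4+\delta)$-moment bounds, as verified for random polytope functionals in \cite{CSY13,LSY19}. Applying the moment and stabilisation inequalities of \cite[Section 5]{LSY19} to $F_s$ yields $\mathbb{E}[\int (D^2_{x,y}F_s)^2 \, \mathrm{d}\lambda^2(x,y)] \leq C s^{-(d+3)/(d+1)}$, while a parallel argument also gives $\mathbb{E}[\int (D_x F_s)^2 \, \mathrm{d}\lambda(x)] \leq C' s^{-(d+3)/(d+1)}$, whence \eqref{condition} holds with a constant $\alpha$ independent of $s$.

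The heart of the proof is ingredient (ii). The plan is to construct an explicit good event for points $x$ lying in a spherical shell at distance of order $s^{-2/(d+1)}$ from $\partial B^d(0,1)$. On this event we arrange that (a) a cap $C_x$ around the radial direction of $x$ contains no other points of $\eta_s$, so that adding $x$ makes $x$ a vertex of the enlarged polytope $\mathrm{Conv}(\eta_s \cup \{0,x\})$ with a facet structure determined by a fixed, explicit configuration of points just outside $C_x$; (b) the Poisson process outside $C_x$ stabilises so that the contribution of $x$ to $A_{p_i}$ takes the form (up to lower order in $s^{-2/(d+1)}$) of $\|x\|^{1-p_i}$ times a geometric factor of order $s^{-(d-1)/(d+1)}$ that is independent of $p_i$. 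The probability of such a cap configuration is bounded away from zero, and integrating the shell's Lebesgue measure against $\lambda = s\,\mathrm{d}x$ produces the factor $s^{-2/(d+1)}$, giving total order $s \cdot s^{-2/(d+1)} \cdot (s^{-(d-1)/(d+1)})^2 = s^{-(d+3)/(d+1)}$.

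\textbf{Main obstacle.} The critical difficulty, unlike in Section \ref{sec:spatial_geometric_graphs}, is that one cannot arrange for a single $D_x A_{p_i}$ to dominate, so the non-cancellation $|\alpha_1 D_x A_{p_1} + \alpha_2 D_x A_{p_2}| \geq c s^{-(d-1)/(d+1)}$ must be obtained geometrically. The resolution exploits that on the good event above, the leading behaviour is a scalar multiple of $\alpha_1 \|x\|^{1-p_1} + \alpha_2 \|x\|^{1-p_2}$, and since $r \mapsto r^{1-p_1}$ and $r \mapsto r^{1-p_2}$ are linearly independent on any interval when $p_1 \neq p_2$, the function $r \mapsto \alpha_1 r^{1-p_1} + \alpha_2 r^{1-p_2}$ has only finitely many zeros in $(0,1)$ for $\alpha \neq 0$. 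Hence there exists an open radial subinterval $I \subset (0,1)$ (independent of $s$ for $s$ large) on which $|\alpha_1 r^{1-p_1} + \alpha_2 r^{1-p_2}| \geq c_{\alpha} > 0$, and restricting $x$ to the shell in this radial range provides the required lower bound.
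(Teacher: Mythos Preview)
Your overall architecture --- apply Theorem \ref{thm:varbound}, verify condition \eqref{condition} via the stabilisation machinery of \cite{LSY19}, and bound $\mathbb{E}\int (D_xF_s)^2\,\mathrm{d}\lambda(x)$ from below by a cap construction near the boundary --- matches the paper. The genuine gap is in your non-cancellation argument for ingredient (ii).

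You propose that on the good event $D_xA_{p_i}\approx \|x\|^{1-p_i}G(x)$ with $G$ independent of $p_i$, and then exploit that $r\mapsto \alpha_1 r^{1-p_1}+\alpha_2 r^{1-p_2}$ is nonzero on a fixed subinterval $I\subset(0,1)$. But these two ingredients are incompatible. For the cap event to have probability bounded away from zero one must take $x$ at distance of order $s^{-2/(d+1)}$ from $\partial B^d(0,1)$; thus $\|x\|\to 1$ as $s\to\infty$, and no $s$-independent interval $I\subset(0,1)$ bounded away from $1$ intersects this shell. If instead you let $I$ approach $1$, the value of your function at $r=1$ is $\alpha_1+\alpha_2$, which can vanish. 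In the critical case $\alpha_1=-\alpha_2$ one has $\alpha_1 r^{1-p_1}+\alpha_2 r^{1-p_2}=O(1-r)=O(s^{-2/(d+1)})$, so the proposed lower bound $c_\alpha>0$ fails. Moreover the factorisation itself has error of the same order: the change in $A_{p_i}$ is $\sum_{j=1}^d\rho_j^{1-p_i}\lambda_{d-1}(F_j)-\rho_{d+1}^{1-p_i}\lambda_{d-1}(F_{d+1})$ with distinct facet distances $\rho_j$ differing from one another (and from $\|x\|$) by amounts of order $s^{-2/(d+1)}$, so replacing every $\rho_j$ by $\|x\|$ introduces an error of exactly the size you would need to detect.

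The paper resolves this with a two-case geometric lemma (Lemma \ref{lemma_geometry}). For $\alpha_1+\alpha_2\neq 0$ the leading term is $(\alpha_1+\alpha_2)$ times the increment in ordinary surface area, and one chooses the scale parameter $a$ large so that this dominates the $p$-dependent corrections. For $\alpha_1=-\alpha_2$ one needs a second expansion: the difference $\Delta_{p_1}-\Delta_{p_2}$ has leading part $(p_2-p_1)\sum_{i=1}^d(\rho_i-\rho_{d+1})\lambda_{d-1}(F_i)$, and the construction is tuned (with $a$ small and an explicit lower bound on $\rho_i-\rho_{d+1}$ via the simplex geometry) so that this term is of order $s^{-1}$ and dominates the residuals. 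In short, the case $\alpha_1+\alpha_2=0$ cannot be handled by the linear-independence-of-power-functions idea; one must track the individual facet distances and not collapse them all to $\|x\|$.
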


		Note that we add the origin as an extra point to the Poisson process mainly for technical reasons to ensure a useful definition of the $L^p$ surface area. However, since we are in this section only interested in asymptotic statements for $s\to\infty$, this does not make a difference. Let $\widetilde{Q}$ denote the random polytope that is generated by $\eta_s$, i.e.\ $\widetilde{Q}=\mathrm{Conv}(\eta_s)$, and let $A_p(\widetilde{Q})$ be defined by the right-hand side of \eqref{def l^p surface area}, which is also well-defined if the origin does not belong to the polytope. Since one can choose $m$ disjoint sets $U_1,\dots, U_m\subset B^d(0,1)$ for some $m\in\mathbb{N}$ with $\lambda_d(U_i)>0$, $i\in\{1,\dots,m\}$, such that $0\in\mathrm{Conv}(\xi)$ for all $\xi\in\mathbf{N}$ with $\xi\cap U_i\neq\emptyset$ for all $i\in\{1,\dots,m\}$, we have
		\begin{align}
			\p(A_p(Q)\neq A_p(\widetilde{Q})) &\leq\p(0\notin\mathrm{Conv}(\eta_s))\leq1- \p(\eta_s(U_i) \geq 1 \text{ for } i=1,\dots,m) \nonumber\\
			&=1-\prod_{i=1}^m(1-e^{-s\lambda_d(U_i)}) \leq c_{1,q}e^{-c_{2,q}s} \label{eqn:probability_approximation}
		\end{align}
		for $s\geq1$ with suitable constants $c_{1,q},c_{2,q}>0$.
		 Therefore, the triangle inequality and the estimate $\lvert A_p(Q)-A_p(\widetilde{Q})\rvert\leq 2\kappa_{d}$ provide
		 \begin{align}
		 	\lvert\mathrm{Var}[A_p(Q)]^{1/2}-\Var[A_p(\widetilde{Q})]^{1/2}\rvert^2&\leq \Var[A_p(Q)-A_p(\widetilde{Q})]\leq 	\E[(A_p(Q)-A_p(\widetilde{Q}))^2] \nonumber\\&
		 	\leq (2\kappa_{d})^2c_{1,q}e^{-c_{2,q}s}. \label{eqn:variance_approximation}
		 \end{align}
		and similarly
		\begin{equation}\label{eqn:expectation_approximation}
		\lvert\mathbb{E}[A_p(Q)]-\mathbb{E}[A_p(\widetilde{Q})]\rvert \leq 2\kappa_{d} c_{1,q}e^{-c_{2,q}s}.
		\end{equation}
		Thus, we consider $A_p(\widetilde{Q})$ instead of $A_p(Q)$ throughout this section and, especially, in the proof of Theorem \ref{theorem:randompoly_var}.
	
		We work in the general framework described in Appendix \ref{appendix:stabilising_functionals} with the underlying space $\mathbb{X}=B^d(0,1)$ and the metric
		\begin{align*}
			d_{\max}(x,y)=\max{\{\lVert x-y\rVert,\sqrt{\lvert \lVert x\rVert-\lVert y\rVert \lvert}\}}
		\end{align*}
		for $x,y\in B^d(0,1)$. 
		To prove condition \eqref{condition}, we start with writing the difference of the surface area of the ball $B^d(0,1)$ and the $L^p$ surface area of the random polytope $\widetilde{Q}$ as a sum of scores. The following arguments are mostly analogously to \cite[Section 3.4]{LSY19}, where similar representations for intrinsic volumes were derived. Especially, because the surface area is twice the $(d-1)$-st intrinsic volume, it was shown in \cite[Lemma 3.8]{LSY19} that
		\begin{align*}
			s(\lambda_{d-1}(\partial B^d(0,1))-\lambda_{d-1}(\partial \widetilde{Q}))= 2 \sum_{x\in\eta_s}\xi_{d-1,s}(x,\eta_{s})
		\end{align*}
		with the scores $\xi_{d-1,s}$ as in \cite[last display on p.\ 960]{LSY19} for $s\geq 1$ and where $\partial A$ denotes the boundary of a set $A\subseteq B^d(0,1)$.
		We consider analogous scores $\xi_s$ for the $L^p$ surface area, i.e.\
		\begin{align*}
			\xi_s(x,\eta_s)=2\xi_{d-1,s}(x,\eta_s)+\frac{s}{d}\sum_{F\in\mathcal{F}:x\in F}(1-\mathrm{dist}(0,F)^{1-p})\lambda_{d-1}(F)
		\end{align*}
		for $x\in\eta_s$, where $\mathcal{F}$ denotes the set of all facets of $\widetilde{Q}$. Therefore, we have
		\begin{align*}
			&\sum_{x\in\eta_s}\xi_s(x,\eta_s)
			=\sum_{x\in\eta_s}\Big( 2 \xi_{d-1,s}(x,\eta_s)+\frac{s}{d}\sum_{F\in\mathcal{F}:x\in F}(1-\mathrm{dist}(0,F)^{1-p})\lambda_{d-1}(F)\Big)\\
			&=2\sum_{x\in\eta_s}\xi_{d-1,s}(x,\eta_s)+\sum_{x\in\eta_s}\frac{s}{d}\sum_{F\in\mathcal{F}:x\in F}(1-\mathrm{dist}(0,F)^{1-p})\lambda_{d-1}(F)\\&=s\lambda_{d-1}(\partial B^d(0,1))-s\lambda_{d-1}(\partial \widetilde{Q})+s\lambda_{d-1}(\partial\widetilde{Q})-sA_p(\widetilde{Q})\\
			&=s(\lambda_{d-1}(\partial B^d(0,1))-A_p(\widetilde{Q})).
		\end{align*}
				Fix $\rho_0\in(0,\frac{1}{4})$ and let $B_{-\rho_0}=B^d(0,1)\backslash B^d(0,1-\rho_0)$. In the following Lemma \ref{lemma_scores} and the proof of Theorem \ref{theorem:randompoly_var} we consider slightly modified scores, which are defined by
		\begin{align}
			\label{eq:modified_scores}
			\tilde{\xi}_s(x,\eta_{s})=\mathbbm{1}\{x\in B_{-\rho_0}\}\xi_s(x,(\eta_s\cap B_{-\rho_0})\cup\{0\})
		\end{align}
		for $x\in\eta_s$, $s\geq 1$, and
		\begin{align*}
			\widetilde{A}_p=\sum_{x\in\eta_s}\tilde{\xi}_s(x,\eta_{s}).
		\end{align*}
		We establish that the scores $\tilde{\xi}_s$ have some crucial properties. For exact definitions we refer to Appendix \ref{appendix:stabilising_functionals}.
		\begin{lemma}
			\label{lemma_scores}
			The scores $\tilde{\xi}_s$ are exponentially stabilising with $\alpha_{stab}=d+1$, decay exponentially fast with the distance to the boundary $\partial B^d(0,1)$ with $\alpha_K=d+1$ and fulfil a $q$-th moment condition for $q\geq 1$.
		\end{lemma}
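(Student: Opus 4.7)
The strategy is to mirror the argument of \cite[Section 3.4]{LSY19}, where the analogous three properties are established for the modified intrinsic-volume score. Write
$\tilde{\xi}_s(x,\eta_s) = 2\,\tilde{\xi}_{d-1,s}(x,\eta_s) + \tilde{\zeta}_s(x,\eta_s)$
with
$\tilde{\zeta}_s(x,\eta_s) = \frac{s}{d}\mathbbm{1}\{x\in B_{-\rho_0}\}\sum_{F\in\widetilde{\mathcal{F}}:\,x\in F}(1-\mathrm{dist}(0,F)^{1-p})\lambda_{d-1}(F)$,
where $\widetilde{\mathcal{F}}$ denotes the facets of $\mathrm{Conv}((\eta_s\cap B_{-\rho_0})\cup\{0\})$. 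The three properties for $\tilde{\xi}_{d-1,s}$ are known from \cite{LSY19}, so it suffices to establish them for $\tilde{\zeta}_s$ and combine.

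The key pointwise observation is that for $p\in[0,1]$ and any facet $F$ of a convex polytope contained in $B^d(0,1)$, we have $\mathrm{dist}(0,F)\in[0,1]$, hence $\mathrm{dist}(0,F)^{1-p}\in[0,1]$ and $|1-\mathrm{dist}(0,F)^{1-p}|\leq 1$. Consequently
$|\tilde{\zeta}_s(x,\eta_s)|\leq \frac{s}{d}\mathbbm{1}\{x\in B_{-\rho_0}\}\sum_{F\in\widetilde{\mathcal{F}}:\,x\in F}\lambda_{d-1}(F)$.
This dominating functional is exactly of the type controlled in \cite{LSY19}, where moments of the total facet area incident to a point of a random polytope are estimated uniformly in $s$ via the Mecke formula and standard cap estimates. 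The $q$-th moment bound for $\tilde{\zeta}_s$ for $q\geq 1$ is then immediate.

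For exponential stabilisation, I would argue that the facets of $\mathrm{Conv}((\eta_s\cap B_{-\rho_0})\cup\{0\})$ incident to $x$ are determined by the configuration of $\eta_s$ inside a random neighbourhood of $x$; this radius coincides with the stabilisation radius of $\tilde{\xi}_{d-1,s}$, whose tails in the $d_{\max}$-metric decay with rate $\alpha_{stab}=d+1$ by the construction in \cite[Section 3.4]{LSY19}. For decay in the distance to $\partial B^d(0,1)$, the indicator trivialises the regime $\mathrm{dist}(x,\partial B^d(0,1))>\rho_0$; for $x\in B_{-\rho_0}$, the event that $x$ lies on any facet requires that a circumscribed cap at $x$ be empty, which occurs with probability decaying exponentially in $s^{1/(d+1)}\mathrm{dist}(x,\partial B^d(0,1))$, giving $\alpha_K=d+1$.

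The main technical point is ensuring that the artificial point $\{0\}$ and the restriction to $B_{-\rho_0}$ do not distort the local geometry near $x$. This is guaranteed by the choice $\rho_0<\tfrac14$: every $x\in B_{-\rho_0}$ lies at distance at least $\tfrac34$ from $0$, so the origin cannot be one of the visible neighbours of $x$ that determine the facets through $x$, and all cap estimates from \cite{LSY19} transfer verbatim. Combining the bounds for $2\tilde{\xi}_{d-1,s}$ and $\tilde{\zeta}_s$ yields the three properties with $\alpha_{stab}=\alpha_K=d+1$ and a $q$-th moment condition for every $q\geq 1$.
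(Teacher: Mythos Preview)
Your decomposition and the arguments for exponential stabilisation and exponential decay are fine and match the paper's reasoning (which simply refers to \cite[Lemmas 3.10--3.12]{LSY19}). The gap is in your moment-condition argument.

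The crude bound $|1-\mathrm{dist}(0,F)^{1-p}|\leq 1$ gives
\[
|\tilde{\zeta}_s(x,\eta_s)|\leq \frac{s}{d}\sum_{F\in\widetilde{\mathcal F}:\,x\in F}\lambda_{d-1}(F)\leq \kappa_d\, s\, R(x,\eta_s\cup\{x\})^{d-1},
\]
and this is \emph{not} uniformly bounded in $s$: with the tail $\mathbb{P}(R\geq r)\leq C\exp(-csr^{d+1})$ one computes $\mathbb{E}[(sR^{d-1})^q]\asymp s^{2q/(d+1)}\to\infty$. So the dominating functional you wrote down does not have moments controlled uniformly in $s$, and the reference to ``total facet area'' bounds in \cite{LSY19} does not rescue this, since the scores there involve a surface-area \emph{defect} (of order $R^{d+1}$), not the raw facet area.

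The missing ingredient is that $1-\mathrm{dist}(0,F)^{1-p}$ is itself small, of order $R^2$. The paper extracts this via a simple geometric observation: for any facet $F$ through $x$, the hyperplane through $\partial B^d(x,R)\cap\partial B^d(0,1)$ separates $F$ from the origin, so $\mathrm{dist}(0,F)\geq \sqrt{1-R^2}\geq 1-R^2$, hence $1-\mathrm{dist}(0,F)^{1-p}\leq 1-\mathrm{dist}(0,F)\leq R^2$. This yields
\[
|\tilde{\zeta}_s(x,\eta_s)|\leq \kappa_d\, s\, R(x,\eta_s\cup\{x\})^{d+1},
\]
and $sR^{d+1}$ does have all moments bounded uniformly in $s$ by the stabilisation tail. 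Insert this refinement and the rest of your argument goes through.
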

		\begin{proof}		
			Analogously to \cite[Lemma 3.10, Lemma 3.11 and Lemma 3.12]{LSY19} one can show that the scores are exponentially stabilising and decay exponentially fast with the distance to the boundary $\partial B^d(0,1)$.
			
			Let $R(x,\eta_s\cup\{x\})$ denote the corresponding radius of stabilisation with respect to the $d_\mathrm{max}$- distance that is derived in \cite[p.\ 963]{LSY19} and
			let $\tilde{\xi}_{d-1,s}$ denote the slightly adjusted version of the score $\xi_{d-1,s}$, which is defined as $\xi_s$ in \eqref{eq:modified_scores}.
			
			In order to show a $q$-th moment condition for $p\in[0,1]$ we use that
			\begin{align*}
				\bigcup_{F\in\mathcal{F}:x\in F}F\subseteq B_{\mathrm{max}}^d\left(x,R(x,\eta_s\cup\{x\})\right)\subseteq B^d\left(x,R(x,\eta_s\cup\{x\})\right),
			\end{align*}
			where $B_{\mathrm{max}}^d$ denotes the ball with respect to the $d_\mathrm{max}$-distance. Recall that $\mathcal{F}$ stands for the set of all facets of the random polytope. Hence, due to monotonicity of the surface area of convex sets we have
			\begin{align}
				\label{surface_area_est}
				\sum_{F\in\mathcal{F}:x\in F}\lambda_{d-1}(F)\leq d\kappa_d R(x,\eta_s\cup\{x\})^{d-1}.
			\end{align}
			Let $\widetilde{H}$ be the hyperplane through $\partial B^d\big(x,R(x,\eta_s\cup\{x\})\big)\cap \partial B^d(0,1)$. By the definition of the radius of stabilisation in \cite[p.\ 963]{LSY19}, we know that for each vertex $x$ of the random polytope with $R(x,\eta_s\cup\{x\})\leq 1$, $[0,x]$ intersects $\widetilde{H}$, where $[0,x]$ denotes the line connecting $0$ and $x$. Moreover, we get with \cite[p.\ 963]{LSY19} that for a vertex $x$ the distance of the origin to a facet that contains $x$ is at least as large as the distance from the origin to the hyperplane $\widetilde{H}$. Hence, for a facet $F$ that contains $x$ we have
			\begin{align}
				\label{radius_est}
				\mathrm{dist}(0,F)\geq \mathrm{dist}(0,\widetilde{H})\geq\sqrt{1-R(x,\eta_s\cup\{x\})^2}\geq 1-R(x,\eta_s\cup\{x\})^2
			\end{align}
			since the radius of the $(d-1)$-dimensional ball $\widetilde{H}\cap B^d(0,1)$ can be bounded from above by $R(x,\eta_s\cup\{x\})$. The bound in \eqref{radius_est} is obviously also true for $R(x,\eta_s\cup\{x\})>1$.
			
			Since $\mathrm{dist}(0,F)\leq 1$, it holds that $\mathrm{dist}(0,F)^{1-p}\geq \mathrm{dist}(0,F)$ for $p\in[0,1]$ and thus with \eqref{surface_area_est} and \eqref{radius_est} we have for $x\in B_{-\rho_0}$,
			\begin{align*}
				\lvert 	\tilde{\xi}_s(x,\eta_s)\rvert&=\Big\lvert2\tilde{\xi}_{d-1,s}(x,\eta_s)+\frac{s}{d}\sum_{F\in\mathcal{F}:x\in F}(1-\mathrm{dist}(0,F)^{1-p})\lambda_{d-1}(F)\Big\rvert \\
				&\leq 2 \lvert\tilde{\xi}_{d-1,s}(x,\eta_s)\rvert+\frac{s}{d}\sum_{F\in\mathcal{F}:x\in F}\lvert (1-\mathrm{dist}(0,F))\rvert \lambda_{d-1}(F)\\
				&\leq 2 \lvert\tilde{\xi}_{d-1,s}(x,\eta_s)\rvert+\frac{s}{d}R(x,\eta_s\cup\{x\})^2\sum_{F\in\mathcal{F}:x\in F} \lambda_{d-1}(F)\\
				&\leq 2 \lvert\tilde{\xi}_{d-1,s}(x,\eta_s)\rvert+\kappa_dsR(x,\eta_s\cup\{x\})^{d+1}.
			\end{align*}
			Combining this with the fact from \cite[Lemma 3.11]{LSY19} that there exist constants $C_{stab},c_{stab}>0$ such that
			$$
			\p(R(x,\eta_s\cup\{x\})\geq r)\leq C_{stab} \exp[-c_{stab} s r^{d+1}]
			$$
			for $x\in B^d(0,1)$, $r\geq0$, $s\geq 1$ and \cite[Lemma 3.13]{LSY19} that says that the scores $\tilde{\xi}_{d-1,s}$ fulfil a $q$-th moment condition provides the $q$-th moment condition for $\tilde{\xi}_s$.
		\end{proof}
		
Combining Lemma \ref{lemma_scores} with the arguments from the proof of \cite[Lemma 3.9]{LSY19}, we derive that there exist constants $\bar{C}_p,\bar{c}_p>0$ such that
$$
\max\big\{ \p(s A_p(\widetilde{Q})\neq \widetilde{A}_p), |\mathbb{E}[s A_p(\widetilde{Q})] - \mathbb{E}[\widetilde{A}_p], |\mathrm{Var}[s A_p(\widetilde{Q})] - \mathrm{Var}[\widetilde{A}_p]| \big\} \leq \bar{C}_p \exp[-\bar{c}_p s]
$$
for $s\geq 1$. Together with \eqref{eqn:probability_approximation}, \eqref{eqn:variance_approximation} and \eqref{eqn:expectation_approximation} we obtain
\begin{equation}\label{eqn:approximation_A_p}
\max\big\{\p(s A_p\neq \widetilde{A}_p), |\mathbb{E}[s A_p] - \mathbb{E}[\widetilde{A}_p]|, |\mathrm{Var}[s A_p] - \mathrm{Var}[\widetilde{A}_p]| \big\} \leq \hat{C}_p \exp[-\hat{c}_p s]
\end{equation}
for $s\geq 1$ with constants $\hat{C}_p,\hat{c}_p>0$.
		
		Let $S(y^{(1)},\dots,y^{(m)})$ denote the simplex with vertices $y^{(1)},\dots,y^{(m)}$  for $m\in\{1,\dots,d+1\}$.
		For the proof of Theorem \ref{theorem:randompoly_var} we need to know how the $L^p$ surface area of a polytope changes if we add a simplex on one of its facets. Let this $d$-dimensional simplex be given by $S(z^{(1)},\dots,z^{(d+1)})$ for points $z^{(1)},\dots,z^{(d+1)}\in B^d(0,1)$, where $z^{(d+1)}$ denotes the point that is added and $S(z^{(1)},\dots,z^{(d)})$ is the original facet of the polytope. The facets of the simplex are given by $F_i=S(z^{(1)},\dots,z^{(i-1)},z^{(i+1)},\dots,z^{(d+1)})$ and the distance of a facet to the origin is denoted by $\rho_i=\mathrm{dist}(F_i,0)$ for $i\in\{1,\dots,d+1\}$. We are interested in 
		\begin{align}\label{def_Delta}
			\Delta_{p}=\sum_{i=1}^{d}\rho_i^{1-p}\lambda_{d-1}(F_i)-\rho_{d+1}^{1-p}\lambda_{d-1}(F_{d+1}),
		\end{align}
		which is the change of the $L^p$ surface area after adding the simplex.
		
		In the following we also use the notation $\bar{h}=\mathrm{dist}(z^{(d+1)},F_{d+1})$ for the height of the added simplex, $T_i=S(z^{(1)},\dots,z^{(i-1)},z^{(i+1)},\dots,z^{(d)})$ for the $(d-2)$-dimensional faces of the base of the simplex and $h_i=\mathrm{dist}(\bar{z}_{d+1},T_i)$ for $i\in\{1,\dots,d\}$, where $\bar{z}_{d+1}$ is the projection of $z^{(d+1)}$ to $F_{d+1}$.
		The behaviour of $\Delta_{p}$ is described in the following geometric lemma.
		\begin{lemma}\label{lemma_geometry}
			Let $z^{(1)},\dots,z^{(d+1)}\in B^d(0,1)$.
		For a simplex  $S(z^{(1)},\dots,z^{(d+1)})$, whose vertices are chosen in such a way that $\arg\min_{i=1,\dots,d+1}\rho_i=d+1$ we have
			\begin{align*}
				\Big\lvert \Delta_{p}-\frac{1}{d-1}\sum_{i=1}^d\lambda_{d-2}(T_i)\Big(\sqrt{h_i^2+\bar{h}^2}-h_i\Big)\Big\rvert\leq\rho_{d+1}^{-p}(1-\rho_{d+1})\sum_{i=1}^{d+1}\lambda_{d-1}(F_i)
			\end{align*}
		for $p\in[0,1]$ and
		\begin{align*}
			\Big\lvert &\Delta_{p_1}-\Delta_{p_2}-\sum_{i=1}^{d}(p_2-p_1)(\rho_i-\rho_{d+1})\lambda_{d-1}(F_i)\Big\rvert\\&\leq 2 \rho_{d+1}^{-p_2-1} (1-\rho_{d+1})^2\sum_{i=1}^d\lambda_{d-1}(F_i)+\rho_{d+1}^{-p_2}(1-\rho_{d+1})\sum_{i=1}^d\lambda_{d-2}(T_i)\Big(\sqrt{h_i^2+\bar{h}^2}-h_i\Big)
		\end{align*}
		for $p_1,p_2\in[0,1]$ with $p_1<p_2$.
		\end{lemma}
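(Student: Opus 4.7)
The plan is to reduce both bounds to the geometric identity
\begin{equation*}
\sum_{i=1}^d \lambda_{d-1}(F_i) - \lambda_{d-1}(F_{d+1}) = \frac{1}{d-1}\sum_{i=1}^d \lambda_{d-2}(T_i)\bigl(\sqrt{h_i^2+\bar{h}^2}-h_i\bigr),
\end{equation*}
which follows from two classical simplex formulas. Each $F_i$ with $i\le d$ is a $(d-1)$-dimensional simplex with base $T_i$ and apex $z^{(d+1)}$; by Pythagoras the perpendicular distance from $z^{(d+1)}$ to the affine hull of $T_i$ equals $\sqrt{h_i^2+\bar{h}^2}$, so the base-times-height formula gives $\lambda_{d-1}(F_i)=\frac{1}{d-1}\lambda_{d-2}(T_i)\sqrt{h_i^2+\bar{h}^2}$. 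Decomposing the base $F_{d+1}$ into sub-simplices by joining $\bar{z}_{d+1}$ to each $T_i$ yields $\lambda_{d-1}(F_{d+1})=\frac{1}{d-1}\sum_{i=1}^d\lambda_{d-2}(T_i)h_i$, and subtracting delivers the identity.

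For the first inequality, I would use the identity to rewrite
\begin{equation*}
\Delta_p - \tfrac{1}{d-1}\sum_{i=1}^{d}\lambda_{d-2}(T_i)\bigl(\sqrt{h_i^2+\bar{h}^2}-h_i\bigr) = \sum_{i=1}^d(\rho_i^{1-p}-1)\lambda_{d-1}(F_i) + (1-\rho_{d+1}^{1-p})\lambda_{d-1}(F_{d+1}).
\end{equation*}
Since $\rho_{d+1}\le\rho_i\le 1$ and $1-p\in[0,1]$, each of $|\rho_i^{1-p}-1|$ and $|1-\rho_{d+1}^{1-p}|$ is bounded by $1-\rho_{d+1}^{1-p}$, and the mean value theorem applied to $\rho\mapsto\rho^{1-p}$ on $[\rho_{d+1},1]$ gives $1-\rho_{d+1}^{1-p}\le\rho_{d+1}^{-p}(1-\rho_{d+1})$. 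A triangle inequality then produces the bound with factor $\sum_{i=1}^{d+1}\lambda_{d-1}(F_i)$.

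For the second inequality, set $H(\rho)=\rho^{1-p_1}-\rho^{1-p_2}$ and use the identity to split
\begin{equation*}
\Delta_{p_1}-\Delta_{p_2} = \sum_{i=1}^d\bigl(H(\rho_i)-H(\rho_{d+1})\bigr)\lambda_{d-1}(F_i) + \tfrac{H(\rho_{d+1})}{d-1}\sum_{i=1}^d\lambda_{d-2}(T_i)\bigl(\sqrt{h_i^2+\bar{h}^2}-h_i\bigr).
\end{equation*}
The mean value theorem in $\rho$ gives $H(\rho_i)-H(\rho_{d+1})=H'(\xi_i)(\rho_i-\rho_{d+1})$. Since $H'(1)=p_2-p_1$ and the elementary bound $\rho^{-p}-1\le\rho^{-1}(1-\rho)\le\rho_{d+1}^{-1}(1-\rho_{d+1})$ (valid for $\rho\in[\rho_{d+1},1]$, $p\in[0,1]$) controls both summands in $H'(\rho)-(p_2-p_1)=(1-p_1)(\rho^{-p_1}-1)-(1-p_2)(\rho^{-p_2}-1)$, one obtains $|H'(\xi_i)-(p_2-p_1)|\le 2\rho_{d+1}^{-p_2-1}(1-\rho_{d+1})$; combined with $|\rho_i-\rho_{d+1}|\le 1-\rho_{d+1}$ this gives the first error term. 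For the residual, MVT in $p$ on $p\mapsto\rho_{d+1}^{1-p}$ together with $|\ln\rho_{d+1}|\le\rho_{d+1}^{-1}(1-\rho_{d+1})$ yields $|H(\rho_{d+1})|\le\rho_{d+1}^{-p_2}(1-\rho_{d+1})$, producing the second error term.

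The principal obstacle is justifying the geometric identity cleanly: decomposing $F_{d+1}$ into the sub-simplices $S(\bar{z}_{d+1},T_i)$ implicitly uses that $\bar{z}_{d+1}$ lies inside $F_{d+1}$, so one must deduce this position from the hypothesis $\arg\min_i \rho_i=d+1$ (intuitively, $F_{d+1}$ being the facet nearest the origin forces the apex to project into its relative interior), or else rework the identity with signed heights and propagate the change through the analytic estimates. Once the identity is in place, the remaining steps are routine mean value computations, though the careful chaining needed to recover the exponent $-p_2-1$ in the first error term requires attention.
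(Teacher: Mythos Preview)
Your proposal follows the paper's proof essentially step for step: the same geometric identity, the same mean-value bound for the first inequality, and for the second inequality the paper uses the Taylor estimate $|u^\tau-v^\tau-\tau(u-v)|\le\tau(1-\tau)v^{\tau-2}(1-v)^2$ with $\tau=1-p_j$, $u=\rho_i$, $v=\rho_{d+1}$ in place of your MVT on $H$ followed by a bound on $H'(\xi_i)-H'(1)$, which amounts to the same thing. Concerning your worry about whether $\bar z_{d+1}\in F_{d+1}$: the paper does not justify this either and simply asserts the decomposition $\lambda_{d-1}(F_{d+1})=\frac{1}{d-1}\sum_i\lambda_{d-2}(T_i)h_i$; this is harmless because the lemma is only applied in the proof of Theorem~\ref{theorem:randompoly_var} to configurations where $\bar z_{d+1}$ lies near the centroid of $F_{d+1}$ by construction, so you need not extract it from the hypothesis $\arg\min_i\rho_i=d+1$.
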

		\begin{proof}
			For $i\in\{1,\dots,d+1\}$ let $F_{d+1}^{(i)}=S(z^{(1)},\dots,z^{(i-1)},z^{(i+1)},\dots,z^{(d)},\bar{z}_{d+1})$. Then, we have
			\begin{align*}
				\lambda_{d-1}(F_{d+1})=\sum_{i=1}^d\lambda_{d-1}(F_{d+1}^{(i)})=\frac{1}{d-1}\sum_{i=1}^{d}\lambda_{d-2}(T_i)h_i
			\end{align*}
		and
		\begin{align*}
			\sum_{i=1}^{d}\lambda_{d-1}(F_i)=\sum_{i=1}^d\frac{1}{d-1}\lambda_{d-2}(T_i)\mathrm{dist}(z^{(d+1)},T_i)=\frac{1}{d-1}\sum_{i=1}^d\lambda_{d-2}(T_i)\sqrt{h_i^2+\bar{h}^2}.
		\end{align*}
		Hence,
		\begin{align}
			\label{eq_difference_fi}
			\sum_{i=1}^{d}\lambda_{d-1}(F_i)-\lambda_{d-1}(F_{d+1})=\frac{1}{d-1}\sum_{i=1}^d\lambda_{d-2}(T_i)\Big(\sqrt{h_i^2+\bar{h}^2}-h_i\Big).
		\end{align}
		Note that, due to the mean value theorem and the assumption $\arg\min_{i=1,\dots,d+1}\rho_i=d+1$, one has for $i\in\{1,\dots,d+1\}$,
		\begin{align*}
			0\leq 1-\rho_i^{1-p}\leq (1-p)\rho_i^{-p}(1-\rho_i)\leq (1-p)\rho_{d+1}^{-p}(1-\rho_{d+1}).
		\end{align*}
		Thus,
		\begin{align*}
			0\leq \sum_{i=1}^{d+1}\lambda_{d-1}(F_i)(1-\rho_i^{1-p})\leq \sum_{i=1}^{d+1}\lambda_{d-1}(F_i)(1-p)\rho_{d+1}^{-p}(1-\rho_{d+1}).
		\end{align*}
	Therefore, it follows with \eqref{def_Delta} and \eqref{eq_difference_fi} that
	\allowdisplaybreaks
	\begin{align*}
			&\Big\lvert \Delta_{p}-\frac{1}{d-1}\sum_{i=1}^d\lambda_{d-2}(T_i)\Big(\sqrt{h_i^2+\bar{h}^2}-h_i\Big)\Big\rvert\\&=\Big\lvert \sum_{i=1}^d (\rho_i^{1-p}-1)\lambda_{d-1}(F_i) -(\rho_{d+1}^{1-p}-1)\lambda_{d-1}(F_{d+1})\Big\rvert\\
			&\leq \sum_{i=1}^{d+1}\lambda_{d-1}(F_i)(1-\rho_i^{1-p})\leq \sum_{i=1}^{d+1}\lambda_{d-1}(F_i)(1-p)\rho_{d+1}^{-p}(1-\rho_{d+1})\\&\leq\rho_{d+1}^{-p}(1-\rho_{d+1})\sum_{i=1}^{d+1}\lambda_{d-1}(F_i).
	\end{align*}

	For the second inequality we have for $p_1<p_2$,
	\begin{align*}
		 \Delta_{p_1}-\Delta_{p_2}&=\sum_{i=1}^{d}(\rho_i^{1-p_1}-\rho_i^{1-p_2})\lambda_{d-1}(F_i)-(\rho_{d+1}^{1-p_1}-\rho_{d+1}^{1-p_2})\lambda_{d-1}(F_{d+1})\\
		 &=\sum_{i=1}^{d}(\rho_i^{1-p_1}-\rho_i^{1-p_2}-(\rho_{d+1}^{1-p_1}-\rho_{d+1}^{1-p_2}))\lambda_{d-1}(F_i)\\&\;\;\;\;+(\rho_{d+1}^{1-p_1}-\rho_{d+1}^{1-p_2})\Big(\sum_{i=1}^d\lambda_{d-1}(F_i)-\lambda_{d-1}(F_{d+1})\Big).
	\end{align*}
	The mean value theorem leads to
	\begin{align*}
		\lvert\rho_{d+1}^{1-p_1}-\rho_{d+1}^{1-p_2}\rvert=\rho_{d+1}^{1-p_1}\lvert1-\rho_{d+1}^{p_1-p_2}\rvert & \leq \rho_{d+1}^{1-p_1}(p_2-p_1)\rho_{d+1}^{p_1-p_2-1}(1-\rho_{d+1}) \\
		& = (p_2-p_1)\rho_{d+1}^{-p_2}(1-\rho_{d+1}).
	\end{align*}
	Together with \eqref{eq_difference_fi} it follows that
	\begin{align}
		\Big\lvert &\Delta_{p_1}-\Delta_{p_2}-\sum_{i=1}^{d}(\rho_i^{1-p_1}-\rho_i^{1-p_2}-(\rho_{d+1}^{1-p_1}-\rho_{d+1}^{1-p_2}))\lambda_{d-1}(F_i)\Big\rvert\nonumber\\&\leq (p_2-p_1) \rho_{d+1}^{-p_2} (1-\rho_{d+1}) \frac{1}{d-1}\sum_{i=1}^d\lambda_{d-2}(T_i)\Big(\sqrt{h_i^2+\bar{h}^2}-h_i\Big).\label{proof_lemma_4.3}
	\end{align}
	For $u,v\in[0,1]$ with $u\geq v$ and $\tau\in[0,1]$ Taylor approximation provides
	\begin{align*}
		\lvert u^\tau-v^\tau-\tau(u-v)\rvert\leq\tau(1-\tau)\Big(\frac{u^{\tau-2}}{2}(1-u)^2+\frac{v^{\tau-2}}{2}(1-v)^2\Big)\leq \tau(1-\tau)v^{\tau-2}(1-v)^2.
	\end{align*}
	Applying this inequality for $\tau=1-p_1$ or $\tau=1-p_2$, $u=\rho_i$ and $v=\rho_{d+1}$, we derive together with \eqref{proof_lemma_4.3} and $\rho_{d+1}\leq 1$,
	\allowdisplaybreaks
	\begin{align*}
		\Big\lvert &\Delta_{p_1}-\Delta_{p_2}-\sum_{i=1}^{d}(p_2-p_1)(\rho_i-\rho_{d+1})\lambda_{d-1}(F_i)\Big\rvert\\&\leq\sum_{i=1}^d((1-p_1)p_1\rho_{d+1}^{-p_1-1}+(1-p_2)p_2\rho_{d+1}^{-p_2-1})(1-\rho_{d+1})^2\lambda_{d-1}(F_i)\\&\;\;\;\;+(p_2-p_1)\rho_{d+1}^{-p_2}(1-\rho_{d+1})\frac{1}{d-1}\sum_{i=1}^d\lambda_{d-2}(T_i)\Big(\sqrt{h_i^2+\bar{h}^2}-h_i\Big)\\
		&\leq 2\rho_{d+1}^{-p_2-1}(1-\rho_{d+1})^2\sum_{i=1}^d\lambda_{d-1}(F_i)+\rho_{d+1}^{-p_2}(1-\rho_{d+1})\sum_{i=1}^d\lambda_{d-2}(T_i)\Big(\sqrt{h_i^2+\bar{h}^2}-h_i\Big),
	\end{align*}
	which completes the proof.
		\end{proof}

		In order to derive Theorem \ref{theorem:randompoly_var} from Theorem \ref{thm:varbound}, we consider the situation that adding an additional point increases the random polytope by exactly one simplex over an existing facet. Lemma \ref{lemma_geometry} allows us to control the corresponding change of the $L^p$ surface area. The main challenge of the following proof is to show that the described situation is sufficiently likely.
		\begin{proof}[Proof of Theorem \ref{theorem:randompoly_var}]
			Let $a>0$ be fixed. Throughout the proof we choose $s\geq 1$ depending on $a$ large enough such that several conditions hold.
			Recall that $e_i$ denotes the standard unit vector in the $i$-th direction and define $x^{(d+1)}=(1-as^{-2/(d+1)})e_1$. Let $x^{(1)},\dots, x^{(d)}\in B^d(0,1)$ be points on the hyperplane
			$$
			H=\{y=(y_1,\dots,y_d)\in\mathbb{R}^d:y_1=1-(a+a^2)s^{-2/(d+1)}\}
			$$
			of pairwise distance $2\ell=2\sqrt{a}s^{-1/(d+1)}$ that form a regular $(d-1)$-dimensional simplex $S$ such that all points have the same distance to $x^{(d+1)}$. Then, $x^{(1)},\dots, x^{(d+1)}$ are the vertices of a $d$-dimensional simplex with height $h=a^2s^{-2/(d+1)}$.
			For a set $A\subset B^d(0,1)$ and $x\in B^d(0,1)\backslash\mathrm{int}(A)$ let 
			\begin{align*}
				\mathrm{Vis}(x,A)=\{y\in B^d(0,1):[y,x]\cap \mathrm{int}(A)=\emptyset\}
			\end{align*} denote the visibility region at $x$, where $\mathrm{int}(A)$ stands for the interior of $A$. Recall that $[y,x]$ denotes the line connecting $x$ and $y$. Let $\varepsilon_h,\varepsilon_\ell\in(0,1/4)$, which will be chosen sufficiently small such that some properties are satisfied throughout this proof. Now, choose $d$ cuboids $C_1^x,\dots, C_{d}^x\subset \mathrm{Vis}(x^{(d+1)},\mathrm{Conv}(x^{(1)},\dots,x^{(d+1)}))$ containing $x^{(1)},\dots, x^{(d)}$ each with height $\varepsilon_ha^2s^{-2/(d+1)}$ and such that its $(d-1)$-dimensional base is a cube of side length $\varepsilon_\ell \sqrt{a}s^{-1/(d+1)}$ which is contained in the hyperplane $H$.
			
			Indeed, $\varepsilon_h,\varepsilon_\ell\in(0,1/4)$ can be chosen small enough such that $C_1^x,\dots, C_{d}^x\subset B^d(0,1)$ because by e.g.\ \cite[Section 6, p.\ 367]{B92} the height $h_k$ of a $k$-dimensional regular simplex $S_k$ with edge length $2\ell$ is given by
			\begin{align}\label{h(Sk)}
				h_k(S_k)=\frac{2\ell}{\sqrt{2}}\sqrt{\frac{k+1}{k}},
			\end{align}
			i.e. for $y\in C_i^x$ with $i\in\{1,\dots, d\}$ we have
			\begin{align*}
				\lVert y\rVert^2&\leq (1-(a+a^2-\varepsilon_ha^2)s^{-2/(d+1)})^2+\left(\frac{d-1}{d}h_{d-1}(S_{d-1})+(d-1)\varepsilon_\ell\sqrt{a} s^{-1/(d+1)}\right)^2\\
				&=(1-(a+a^2-\varepsilon_ha^2)s^{-2/(d+1)})^2+\left(\sqrt{\frac{2(d-1)a}{d}}s^{-1/(d+1)}+(d-1)\varepsilon_\ell\sqrt{a} s^{-1/(d+1)}\right)^2\\
				&=1-\Big[2\Big(\frac{a}{d}+a^2-\varepsilon_ha^2\Big)-2(d-1)\sqrt{\frac{2(d-1)}{d}}\varepsilon_\ell a-(d-1)^2\varepsilon_\ell^2a\\&\quad\quad\quad\quad-(a+a^2-\varepsilon_ha^2)^2s^{-2/(d+1)}\Big]s^{-2/(d+1)}<1
			\end{align*}
			for $\varepsilon_h,\varepsilon_\ell\in(0,1/4)$ small enough and $s$ sufficiently large.
			\begin{figure}
				\centering
				
				\begin{tikzpicture}[decoration={brace,amplitude=2mm}]
					centering
					\draw[black] (0,0) -- (0:4) arc (0:180:4) -- cycle;
					\filldraw[black] (0,0) circle (0.06);
					\draw(0,-0.3) node{\footnotesize$0$};
					\filldraw[black] (3,1.5) circle (0.06);
					\draw(3,1.2) node{\footnotesize $x^{(2)}$};
					\filldraw[black] (-3,1.5) circle (0.06);
					\draw(-3,1.2) node{\footnotesize$x^{(1)}$};
					\filldraw[black] (0,3) circle (0.06);
					\draw(0.1,3.4) node{\footnotesize$x^{(d+1)}$};
					\draw (-3,1.5)--(0,3);
					\draw (0,3)--(3,1.5);
					\draw (3,1.5)--(-3,1.5);
					\draw[dashed] (-3,1.5)--(-4,1);
					\draw[dashed] (3,1.5)--(4,1);
					\draw[dashed](-4,1)--(4,1);
					\draw[decorate]  (-4.5,1)--(-4.5,4);
					\draw(-5.75,2.5) node{$c_as^{-2/(d+1)}$};
					\draw[decorate]  (4.5,3)--(4.5,1.5);
					\draw(5.75,2.25) node{$a^2s^{-2/(d+1)}$};
					\draw[decorate]  (4.5,4)--(4.5,3);
					\draw(5.75,3.5) node{$as^{-2/(d+1)}$};
					\draw[decorate]  (0,0.75)--(-3,0.75);
					\draw(-1.25,0.25) node{$\sqrt{a}s^{-1/(d+1)}$};
					\draw[blue](-3.5,1.75)--(-3.5,1.5);
					\draw[blue](-3.5,1.75)--(-3,1.75);
					\draw[blue](-3,1.75)--(-3,1.5);
					\draw[blue](-3.5,1.5)--(-3,1.5);
					\draw[blue, ->](-3.25,2.5)--(-3.25,1.8);
					\draw[blue] (-3.25,2.8) node{$C_1^x$};
					\draw[blue](3.5,1.75)--(3.5,1.5);
					\draw[blue](3.5,1.75)--(3,1.75);
					\draw[blue](3,1.75)--(3,1.5);
					\draw[blue](3.5,1.5)--(3,1.5);
					\draw[blue, ->](3.25,2.5)--(3.25,1.8);
					\draw[blue] (3.25,2.8) node{$C_2^x$};
					
					\filldraw[red] (3.4,1.6) circle (0.06);
					\draw[red](4.1,1.6) node{\footnotesize $z^{(2)}$};
					\filldraw[red] (-3.2,1.7) circle (0.06);
					\draw[red](-4,1.7) node{\footnotesize$z^{(1)}$};
					\filldraw[red] (0,2.75) circle (0.06);
					\draw[red](0.1,2.4) node{\footnotesize$z^{(d+1)}$};
					\draw[red] (-3.2,1.7)--(0,2.75);
					\draw[red] (0,2.75)--(3.4,1.6);
					\draw[red] (3.4,1.6)--(-3.2,1.7);
					\draw[decorate]  (0.5,3)--(0.5,2.75);
					\draw(1.7,2.875) node{$ta^2s^{-2/(d+1)}$};
				\end{tikzpicture}
				\caption{Construction in $B^d(0,1)$ for $d=2$}
				\label{fig:construction}
			\end{figure}
			
			In the sequel, we use the same notation as in the context of Lemma \ref{lemma_geometry}.
			We consider the simplex $S(z^{(1)},\dots,z^{(d+1)})$, where $z^{(i)}\in C_i^x$ for $i\in\{1,\dots,d\}$ and $z^{(d+1)}=x^{(d+1)}-ta^2s^{-2/(d+1)}e_1$ for $t\in[0,1/2]$ (see Figure \ref{fig:construction}). Due to the choice of $C_i^x$ we have for $s$ sufficiently large and $t\in[0,1/2]$,
			\begin{align}\label{rho_d+1_l}
				 \rho_{d+1}\geq 1-(a+a^2)s^{-2/(d+1)}
			\end{align}
			and
			\begin{align}\label{h_quer_l_und_u}
			\frac{a^2}{4} s^{-2/(d+1)}	\leq \Big(a^2-\varepsilon_ha^2-\frac{a^2}{2}\Big)s^{-2/(d+1)}\leq\bar{h}\leq a^2s^{-2/(d+1)},
			\end{align}
			where we used $\varepsilon_h\in(0,1/4)$.
			
			For $i\in\{1,\dots,d\}$ we show in the following that we can control $h_i=\mathrm{dist}(T_i,\bar{z}_{d+1})$ with the choice of $\varepsilon_h,\varepsilon_\ell$ uniformly for $s$ sufficiently large.
			Define $\widetilde{F}_{d+1}=S(x^{(1)},\dots,x^{(d)})$ and let $\bar{z}_{d+1}$ and $\bar{x}_{d+1}$ denote the projections of $z^{(d+1)}$ to $F_{d+1}$ and $\widetilde{F}_{d+1}$, respectively.
			Moreover, let $\widetilde{T}_{i}=S(x^{(1)},\dots,x^{(i-1)},x^{(i+1)},\dots,x^{(d)})$. Then, for each $y\in T_{i}$ there exists a $\tilde{y}\in \widetilde{T}_{i}$ such that $\lVert y-\tilde{y}\rVert\leq (d-1)\varepsilon_\ell\sqrt{a}s^{-1/(d+1)}+\varepsilon_ha^2s^{-2/(d+1)}$. Hence, with \eqref{h(Sk)},
			\begin{align*}
				\sqrt{\frac{2}{d(d-1)}}\sqrt{a}s^{-1/(d+1)} & =\mathrm{dist}(\bar{x}_{d+1},\widetilde{T}_i)\\
				&\leq \mathrm{dist}(\bar{x}_{d+1},T_i)+(d-1)\varepsilon_\ell\sqrt{a}s^{-1/(d+1)}+\varepsilon_ha^2s^{-2/(d+1)}\\
				&\leq \lVert\bar{x}_{d+1}-\bar{z}_{d+1}\rVert+h_i+(d-1)\varepsilon_\ell\sqrt{a}s^{-1/(d+1)}+\varepsilon_ha^2s^{-2/(d+1)}.
			\end{align*}
			For the distance of the projections we have
			\begin{align}
				\label{dist_xz}
				\lVert \bar{x}_{d+1}-\bar{z}_{d+1}\rVert\leq \lVert \bar{x}_{d+1}-z^{(d+1)}\rVert+\lVert z^{(d+1)}-\bar{z}_{d+1}\rVert\leq 2a^2s^{-2/(d+1)}.
			\end{align}
			Hence, we derive for $h_i$,
			\begin{align*}
				h_i&\geq \sqrt{\frac{2}{d(d-1)}}\sqrt{a}s^{-1/(d+1)}-2a^2s^{-2/(d+1)}-(d-1)\varepsilon_\ell\sqrt{a}s^{-1/(d+1)}-\varepsilon_ha^2s^{-2/(d+1)}.
			\end{align*}
			Note that $2a^2 s^{-2/(d+1)}\leq\frac{1}{2} \sqrt{\frac{2}{d(d-1)}}\sqrt{a} s^{-1/(d+1)}$ for $s$ sufficiently large. Therefore, we can choose $\varepsilon_\ell,\varepsilon_h>0$ small enough such that for all $t\in[0,1/2]$ and $s$ sufficiently large,
			\begin{align}\label{h_i_l}
				h_i\geq c_{h,l}\sqrt{a}s^{-1/(d+1)}
			\end{align}
			with a constant $c_{h,l}>0$. Here the constant $c_{h,l}$ does not depend on $a$, while the lower bound for $s$ such that the inequality holds may depend on $a$. The same applies to the inequalities and constants in the sequel if not stated otherwise.	Moreover, using again \eqref{dist_xz} as well as $\varepsilon_h, \varepsilon_\ell \leq 1/4$, we have
			\begin{align}
				h_i&\leq \lVert \bar{x}_{d+1}-\bar{z}_{d+1}\rVert+\mathrm{dist}(\bar{x}_{d+1}, T_i)\nonumber\\&\leq \lVert \bar{x}_{d+1}-\bar{z}_{d+1}\rVert+\mathrm{dist}(\bar{x}_{d+1}, \widetilde{T}_i) +(d-1)\varepsilon_\ell \sqrt{a}s^{-1/(d+1)}+\varepsilon_ha^2s^{-2/(d+1)}\nonumber\\&\leq \frac{9}{4}a^2s^{-2/{(d+1)}}+ \left(\sqrt{\frac{2}{d(d-1)}}+\frac{d-1}{4}\right) \sqrt{a}s^{-1/(d+1)} \nonumber\\ &\leq c_{h,u}\sqrt{a}s^{-1/(d+1)} \label{h_i_u}
			\end{align}
			for a suitable constant $c_{h,u}>0$, $t\in[0,1/2]$, and $s$ sufficiently large.
				
			By e.g. \cite[Section 6, p.\ 367]{B92} the $k$-dimensional volume $\lambda_k$ of a $k$-dimensional regular simplex $S_k$ with edge length $2\ell$ is
			\begin{align}\label{lambda(Sk)}
				\lambda_k(S_k)=\frac{(2\ell)^k}{k!}\sqrt{\frac{k+1}{2^k}}
			\end{align}
			for $k\in\mathbb{N}$.
			By definition $\widetilde{T}_{i}$ with $i\in\{1,\dots,d\}$ is a regular $(d-2)$-dimensional simplex of side length $2\ell=2\sqrt{a}s^{-1/{(d+1)}}$.
			We know that the $(d-2)$-dimensional volume of a $(d-2)$-dimensional regular simplex of side length $2\sqrt{a}$ in $\mathbb{R}^d$ is continuous with regard to translations of the vertices. Therefore, we can choose a cube around each vertex small enough such that moving each vertex within the corresponding cube changes the $(d-2)$-dimensional volume of the $(d-2)$-dimensional simplex only slightly. Due to homogeneity we can transfer this result to a regular simplex of side length $2\sqrt{a}s^{-1/{(d+1)}}$ for all $s\geq 1$, where each side of the cubes is scaled by $s^{-1/{(d+1)}}$. Hence, we can choose $\varepsilon_h,\varepsilon_\ell\in(0,1/4)$ small enough such that with \eqref{lambda(Sk)} for $s$ sufficiently large,
		\begin{align}
			\lambda_{d-2}(T_i)&\geq \frac{1}{2} 	\lambda_{d-2}(S(x^{(1)},\dots,x^{(i-1)},x^{(i+1)},\dots x^{(d)}))=\frac{2^{(d-2)/2}\sqrt{d-1}}{2(d-2)!}(\sqrt{a} s^{-1/(d+1)})^{d-2} \nonumber\\&=: c_{T,l}a^{(d-2)/2}s^{-(d-2)/(d+1)}\label{T_i_l}
		\end{align}
		and
		\begin{align}
			\lambda_{d-2}(T_i)\leq c_{T,u}a^{(d-2)/2}s^{-(d-2)/(d+1)}\label{T_i_u}
		\end{align}
		for a suitable constant $c_{T,u}>0$.
		Together with \eqref{h_i_u}, it holds
		\begin{align*}
			\lambda_{d-1}(F_{d+1})&=\frac{1}{d-1}\sum_{i=1}^d\lambda_{d-2}(T_i)h_i\\&\leq \frac{1}{d-1}\sum_{i=1}^dc_{T,u}a^{(d-2)/2}s^{-(d-2)/(d+1)}c_{h,u}\sqrt{a}s^{-1/(d+1)}
		\end{align*}
		and with \eqref{h_quer_l_und_u},
			\begin{align*}
				\lambda_{d-1}(F_i)&=\frac{1}{d-1}\lambda_{d-2}(T_i)\sqrt{h_i^2+\bar{h}^2}\\
				&\leq \frac{1}{d-1}c_{T,u}a^{(d-2)/2}s^{-(d-2)/(d+1)}\sqrt{c_{h,u}^2as^{-2/(d+1)}+a^4s^{-4/(d+1)}}
			\end{align*}
			for $i\in\{1,\dots,d\}$. Hence, we have for $j\in\{1,\dots,d+1\}$ and $s$ sufficiently large,
			\begin{align}
				\lambda_{d-1}(F_j)\leq c_{F,u}a^{(d-1)/2}s^{-(d-1)/(d+1)}\label{F_i_u}
			\end{align}
			 for a suitable constant $c_{F,u}>0$. Analogously, we have for $s$ sufficiently large,
			\begin{align}
				\label{F_i_l}
				\lambda_{d-1}(F_j)&\geq c_{F,l}a^{(d-1)/2}s^{-(d-1)/(d+1)}
			\end{align}
			for a suitable constant $c_{F,l}>0$ and $j\in\{1,\dots,d+1\}$.
			Due to the fundamental theorem of calculus we have for $x>y>0$,
			\begin{align}\label{FTC_l}
				\sqrt{x^2+y^2}-x=\int_0^{y^2}\frac{1}{2\sqrt{x^2+z}}\;\mathrm{d}z\geq y^2\frac{1}{2\sqrt{x^2+y^2}}\geq \frac{y^2}{2\sqrt{2}x}
			\end{align}
			and 
			\begin{align}\label{FTC_u}
				\sqrt{x^2+y^2}-x\leq \frac{y^2}{2x}.
			\end{align}
			We can assume without loss of generality that $p_1<p_2$ and that $(\alpha_1,\alpha_2)\in\mathbb{R}^2\backslash\{0\}$ satisfies $\alpha_1+\alpha_2\geq 0$. In the following we distinct the cases $\alpha_1\neq-\alpha_2$ and $\alpha_1=-\alpha_2$. For $\alpha_1\neq-\alpha_2$, we have, by Lemma \ref{lemma_geometry},
			\begin{align*}
				&\Big\lvert\alpha_1\Delta_{p_1}+\alpha_2\Delta_{p_2}-(\alpha_1+\alpha_2)\frac{1}{d-1}\sum_{i=1}^d\lambda_{d-2}(T_i)\Big(\sqrt{h_i^2+\bar{h}^2}-h_i\Big)\Big\rvert\\&\leq(\lvert\alpha_1\rvert+\lvert\alpha_2\rvert) \rho_{d+1}^{-p_2} (1-\rho_{d+1})\sum_{i=1}^{d+1}\lambda_{d-1}(F_i).	
			\end{align*}
			Together with \eqref{rho_d+1_l}, \eqref{h_quer_l_und_u},  \eqref{h_i_u}, \eqref{T_i_l}, \eqref{F_i_u} and \eqref{FTC_l} we obtain for $\alpha_1+\alpha_2>0$, $t\in[0,1/2]$ and $s$ sufficiently large,
			\begin{align*}
				&\alpha_1\Delta_{p_1}+\alpha_2\Delta_{p_2}\\&\geq \frac{\alpha_1+\alpha_2}{d-1}\sum_{i=1}^d c_{T,l}a^{(d-2)/2}s^{-(d-2)/(d+1)}\frac{\frac{a^4}{16}s^{-4/(d+1)}}{2\sqrt{2}c_{h,u}a^{1/2}s^{-1/(d+1)}}\\&\;\;\;\;-(\lvert\alpha_1\rvert+\lvert\alpha_2\rvert)2^{p_2}(a+a^2)s^{-2/(d+1)}\sum_{i=1}^{d+1}c_{F,u}a^{(d-1)/2}s^{-(d-1)/(d+1)}\\
				&\geq \tilde{c}_{d}a^{(d+5)/2}s^{-1}-\tilde{c}_{d,p_1,p_2}(a^{(d+3)/2}+a^{(d+1)/2})s^{-1}
			\end{align*}
			for suitable constants $\tilde{c}_{d},\tilde{c}_{d,p_1,p_2}>0$, where we used that $\rho_{d+1}\geq \frac{1}{2}$ for $s$ sufficiently large. 
			Hence, we can fix $a>0$ large enough such that this estimation provides for $\alpha_1\neq-\alpha_2$ the existence of a constant $\tilde{c}_1>0$ for which
			\begin{align}
				\lvert\alpha_1\Delta_{p_1}+\alpha_2\Delta_{p_2}\rvert\geq\tilde{c}_1a^{(d+5)/2}s^{-1}\label{alpha1+alpha2neq0}
			\end{align} 
		for $s$ sufficiently large and $t\in[0,1/2]$.

						\begin{figure}[htb]
				\begin{minipage}[t]{.35\linewidth}
					\begin{tikzpicture}[decoration={brace,amplitude=2mm}, scale=0.8]
						\vspace{100pt}
						\filldraw[black] (0,-2) circle (0.06);
						\draw(0,-2.3) node{\footnotesize$0$};
						\filldraw[black] (2,1.5) circle (0.06);
						\filldraw[black] (-2,1.5) circle (0.06);
						\draw(-2.5,1.5) node{\footnotesize $\widetilde{F}_{d+1}$};
						\filldraw[red](0,3.5) circle (0.06);
						\draw[red](0.1,3.8) node{\footnotesize$z^{(d+1)}$};
						\draw (2,1.5)--(-2,1.5);
						\filldraw[red] (2.2,1.5) circle (0.06);
						\filldraw[red] (-2.2,2.1) circle (0.06);
						\draw[red](-2.8,2) node{\footnotesize$F_{d+1}$};
						\draw[red] (-2.2,2.1)--(2.2,1.5);
						\draw[red] (0,3.5)--(2.2,1.5);
						\draw[red] (-2.2,2.1)--(0,3.5);
						\draw[dashed,blue](0,-2)--(0,3.5);
						\filldraw[blue] (0,1.8)circle (0.06);
						\draw[blue](0.25,1.95) node{\footnotesize$\bar{z}_{0}$};
						\filldraw[blue] (0,1.5)circle (0.06);
						\draw[blue](-0.4,1.25) node{\footnotesize$\bar{x}_{d+1}$};
						\draw[dashed,blue](0,-2)--(0.5087,1.73);
						\filldraw[blue](0.5087,1.73)circle (0.06);
						\draw[blue](1,1.875) node{\footnotesize$u_{d+1}$};
						\draw[dashed,blue](0.5087,1.5)--(0.5087,1.73);
						\filldraw[blue](0.5087,1.5)circle (0.06);
						\draw[blue](0.7,1.3) node{\footnotesize$\bar{u}$};
						\draw[dashed,blue](0,3.5)--(-0.228,1.831);
						\filldraw[blue](-0.228,1.831) circle (0.06);
						\draw[blue](-0.6,2.1) node{\footnotesize$\bar{z}_{d+1}$};
					\end{tikzpicture}
					\caption{Point configuration on $F_{d+1}$ and $\widetilde{F}_{d+1}$}
					\label{fig:point config}
				\end{minipage}
				\begin{minipage}[t]{.65\linewidth}		
					\centering		\raisebox{1.7em}{
						\begin{tikzpicture}[decoration={brace,amplitude=2mm},scale=0.8]
							
							\filldraw[black] (0,0) circle (0.06);
							\draw(0,-0.3) node{\footnotesize$0$};
							
							\filldraw[red](0,5) circle (0.06);
							\draw[red](0.1,5.3) node{\footnotesize$z^{(d+1)}$};
							
							\filldraw[red] (6,2) circle (0.06);
							\draw[red](6.4,2) node{\footnotesize$T_i$};
							\filldraw[red] (-4,3) circle (0.06);
							\draw[red](-4,2.6) node{\footnotesize$z^{(i)}$};
							\draw[red](3.9,3.5) node{\footnotesize$F_i$};
							\draw[red] (-4,3)--(6,2);
							\draw[red] (-4,3)--(0,5);
							\draw[red] (0,5)--(6,2);
							\draw[dashed,blue] (0,5)--(-0.2376,2.624);
							\filldraw[blue] (-0.2376,2.624)circle (0.06);
							\draw[blue](-0.2376,2.3) node{\footnotesize$\bar{z}_{d+1}$};
							\draw[dashed,blue] (0,0)--(0.257,2.57);
							\filldraw[blue] (0.257,2.57)circle (0.06);
							\draw[blue](-0.4,1) node{\footnotesize$\rho_{d+1}$};
							\filldraw[blue] (2,4)circle (0.06);
							\draw[dashed,blue] (0,0)--(2,4);
							\draw[blue](0.8,1) node{\footnotesize$\rho_{i}$};
							\draw[->,thick, green] (1.841,2.4158)--(2,4);
							\filldraw[green] (0.4160, 4.1542)circle (0.06);
							\draw[->,thick,green] (0.257,2.57)--(0.4160, 4.1542);
							\draw[green](2.8,3) node{\footnotesize $\beta_ihu_{d+1}$};
							\draw[->,thick,cyan] (0.4160, 4.1542)--(2,4);
							\draw[cyan](1.2,3.8) node{\footnotesize $v_{i}$};
							\draw[blue](2.3,4.1) node{\footnotesize $u_{i}$};
							\draw[blue](0.7,2.25) node{\footnotesize $u_{d+1}$};
							
					\end{tikzpicture}}
					\caption{Decomposition of the projection of $0$ to $F_i$}
					\label{fig:projection}
				\end{minipage}
			\end{figure}
		
			For $\alpha_1=-\alpha_2$ we fix $a\in(0,1)$. To use the second part of Lemma \ref{lemma_geometry} we need an estimate for
			$\rho_i-\rho_{d+1}$ for $i\in\{1,\dots,d\}$. 
			Let $u_{i}$ be the projection of $0$ to $F_i$ for $i\in\{1,\dots, d+1\}$ and note that $\bar{x}_{d+1}$, which we introduced as the projection of $x^{(d+1)}$ to $\widetilde{F}_{d+1}$, is also the projection of $0$ on $\widetilde{F}_{d+1}$. Then, for every $i\in\{1,\dots, d\}$, there exist a constant $\beta_i\geq 0$ and a vector $v_i$ orthogonal to $u_{d+1}$ such that
			$$
			u_i = (1+\beta_ih) u_{d+1}+v_i
			$$
			and, thus,
			\begin{align*}
				\rho_i^2=\lVert u_i\rVert^2=(1+\beta_ih)^2\lVert u_{d+1}\rVert^2+\lVert v_i\rVert^2=(1+\beta_ih)^2\rho_{d+1}^2+\lVert v_i\rVert^2.
			\end{align*}
			Let $\bar{u}$ be the projection of $u_{d+1}$ to $\widetilde{F}_{d+1}$, while $\bar{z}_0$ is the intersection point of $F_{d+1}$ with the line through $0$ and $z^{(d+1)}$ (see Figure \ref{fig:point config}). We show that we can choose $\varepsilon_h>0$ small enough such that $u_{d+1}$ is very close to $\bar{z}_0$ to ensure a minimum distance from $u_{d+1}$ to $T_i$. It holds 
			\begin{align*}
				\lVert \bar{x}_{d+1}\rVert^2+\lVert \bar{x}_{d+1}-\bar{u}\rVert^2=\lVert \bar{u}\rVert^2\leq\lVert u_{d+1}\rVert^2\leq\lVert \bar{z}_{0}\rVert^2\leq (\lVert \bar{x}_{d+1}\rVert+\varepsilon_ha^2s^{-2/(d+1)})^2,
			\end{align*}
			which implies
			\begin{align*}
				\lVert \bar{x}_{d+1}-\bar{u}\rVert^2\leq 2\lVert \bar{x}_{d+1}\rVert\varepsilon_ha^2s^{-2/(d+1)}+\varepsilon_h^2 a^4s^{-4/(d+1)}.
			\end{align*}
			This provides
			\begin{align*}
				\lVert \bar{z}_0-u_{d+1}\rVert^2&\leq 	\lVert \bar{x}_{d+1}-\bar{u}\rVert^2+ \varepsilon_h^2a^4s^{-4/(d+1)}
				\leq 2\varepsilon_h a^2s^{-2/(d+1)}+ 2 \varepsilon_h^2a^4s^{-4/(d+1)}.
			\end{align*}
			Hence, we can choose $\varepsilon_h\in(0,1/4)$ small enough such that 
			\begin{align}\label{z0-u_d+1}
				\lVert \bar{z}_0-u_{d+1}\rVert\leq \frac{1}{4}\sqrt{\frac{2}{d(d-1)}}as^{-1/(d+1)}=\frac{\sqrt{a}}{4}\sqrt{\frac{2}{d(d-1)}}\ell\leq \frac{1}{4}\sqrt{\frac{2}{d(d-1)}}\ell
			\end{align}
			since $a\in(0,1)$. For $\varepsilon_\ell>0$ small enough such that for $s$ sufficiently large,
				\begin{align*}
					\mathrm{dist} (\bar{z}_0,T_i)\geq \mathrm{dist}(\bar{x}_{d+1},\widetilde{T}_i)-2\varepsilon_ha^2s^{-2/(d+1)}-(d-1)\varepsilon_\ell\sqrt{a}s^{-1/(d+1)}\geq\frac{1}{2} \sqrt{\frac{2}{d(d-1)}}\ell,
				\end{align*}
			\eqref{z0-u_d+1} implies that $\mathrm{dist}(u_{d+1}, T_i)\geq \frac{1}{4}\sqrt{\frac{2}{d(d-1)}}\ell$ for $i\in\{1,\dots,d\}$ and $s$ sufficiently large. Then, for $\lVert v_i\rVert\leq \frac{1}{8} \sqrt{\frac{2}{d(d-1)}}\ell $, $\mathrm{dist}(u_i,T_i)$ is at least $\frac{1}{8}\sqrt{\frac{2}{d(d-1)}}\ell$ since $\mathrm{dist} (u_{d+1},T_i)\leq \|v_i\| + \mathrm{dist}(u_i,T_i)$ (see Figure \ref{fig:projection}). Hence, with the intercept theorem we have together with \eqref{h_quer_l_und_u} and \eqref{h_i_u},
			\begin{align*}
				\rho_i-\rho_{d+1}&\geq\beta_ih\lVert u_{d+1}\rVert = \bar{h} \frac{\mathrm{dist}(u_i,T_i)}{\mathrm{dist}(z^{(d+1)},T_i)} \\
				&\geq \frac{1}{8}\sqrt{\frac{2}{d(d-1)}}\ell\cdot \frac{\bar{h}}{\sqrt{\bar{h}^2+h_i^2}}\\
				&\geq \frac{1}{8}\sqrt{\frac{2a}{d(d-1)}}s^{-1/(d+1)}\cdot \frac{\frac{1}{4}a^2s^{-2/(d+1)}}{\sqrt{a^4s^{-4/(d+1)}+c_{h,u}^2a s^{-2/(d+1)}}}\\
				&\geq c_{\rho,l} a^2s^{-2/(d+1)}
			\end{align*}
			for a suitable constant $c_{\rho,l}>0$.
			If  $\lVert v_i\rVert>\frac{1}{8}\sqrt{\frac{2}{d(d-1)}}\ell$, we have $$\rho_i^2-\rho_{d+1}^2\geq\rho_i^2-(1+\beta_ih)^2\rho_{d+1}^2 =\|v_i\|^2 >\frac{1}{64}\frac{2}{d(d-1)}\ell^2.$$ Hence,
			\begin{align*}
				\rho_i-\rho_{d+1}\geq \frac{2}{64(\rho_i+\rho_{d+1})d(d-1)}\ell^2\geq \frac{1}{64d(d-1)}\ell^2=\frac{a}{64d(d-1)}s^{-2/(d+1)},
			\end{align*}
			i.e. altogether we have
			\begin{align}
				\rho_i-\rho_{d+1}\geq c_{\rho,l,a}s^{-2/(d+1)}\label{rho_i-rho_d+1}
			\end{align}
			for $s$ sufficiently large with a suitable constant $c_{\rho,l,a}>0$ that depends on $a$.
			
			Together with  Lemma \ref{lemma_geometry} and the inequalities \eqref{rho_d+1_l},  \eqref{h_quer_l_und_u}, \eqref{h_i_l}, \eqref{T_i_u}, \eqref{F_i_u}, \eqref{F_i_l}, \eqref{FTC_u},  \eqref{rho_i-rho_d+1} this provides for a fixed $a\in(0,1)$, $t\in[0,1/2]$ and $s$ sufficiently large,
			\begin{align}
				&\Delta_{p_1}-\Delta_{p_2}\geq \sum_{i=1}^d(p_2-p_1) c_{\rho,l,a}s^{-2/(d+1)}c_{F,l}a^{(d-1)/2}s^{-(d-1)/(d+1)}\nonumber\\
				&-2^{p_2+2}(a+a^2)^2s^{-4/(d+1)}\sum_{i=1}^dc_{F,u}a^{(d-1)/2}s^{-(d-1)/(d+1)}\nonumber\\
				&-2^{p_2}(a+a^2)s^{-2/(d+1)}\sum_{i=1}^d c_{T,u}a^{(d-2)/2}s^{-(d-2)/(d+1)}\frac{a^4s^{-4/(d+1)}}{2c_{h,l}a^{1/2}s^{-1/(d+1)}}\nonumber\\
				&=:C_{a,1}s^{-1}-C_{a,2}s^{-(d+3)/(d+1)}-C_{a,3}s^{-(d+3)/(d+1)},\label{alpha1=-alpha2}
			\end{align}
			which can be bounded from below by $\frac{1}{2} C_{a,1}s^{-1}$ for $s$ sufficiently large.
			Altogether, for $\alpha_1\neq -\alpha_2$ we fix $a>0$ sufficiently large such that \eqref{alpha1+alpha2neq0} holds and for $\alpha_1=-\alpha_2$ we fix $a\in(0,1)$ such that \eqref{alpha1=-alpha2} holds. Then, for $$C_\alpha=\begin{cases}
				\frac{1}{2}C_{a,1}, &\text{ for }\alpha_1=-\alpha_2,\\
				\tilde{c}_1a^{(d+5)/2}, &\text{ else},
			\end{cases}$$ it holds that
			\begin{align}
				\lvert\alpha_1\Delta_{p_1}+\alpha_2\Delta_{p_2}\rvert\geq C_\alpha s^{-1}\label{alphadelta1+alphadelta2}
			\end{align}
			for all $t\in[0,1/2]$ and $s$ sufficiently large.
			
			For the application of Theorem \ref{thm:varbound} we consider the situation that $z^{(1)},\dots,z^{(d)}$ are points of the Poisson process and the point $z^{(d+1)}$ is added. To ensure that the change of $\alpha_1\widetilde{A}_{p_1}+\alpha_2\widetilde{A}_{p_2}$ is given by $s(\alpha_1\Delta_{p_1}+\alpha_2\Delta_{p_2})$ we require that  no further points of $\eta_s$ are present which prevent that $z^{(1)},\dots, z^{(d)}$ is a facet of the random polytope or which could be connected to $z^{(d+1)}$ by edges.
			Therefore, we consider the set
			\begin{align*}
			M_s^x=\{y=(y_1,\dots,y_d)\in B^d(0,1):y_1\geq 1-c_as^{-2/(d+1)}\}.
			\end{align*}
			for some constant $c_a>0$, which might depend on $a$.

			First, we show that the constant $c_a>0$ can be chosen independently from $s$ such that $(B^d(0,1)\backslash 	M_s^x)\cap \mathrm{Vis}(z^{(d+1)},\mathrm{Conv}(z^{(1)},\dots,z^{(d)},x^{(d+1)}))=\emptyset$ for all $z^{(1)}\in C_1^x,\hdots,z^{(d)}\in C_d^x$, i.e.\ that  $c_a>0$ can be chosen in such a way that any line on the boundary of $\mathrm{Conv}(z^{(1)},\dots,z^{(d)},x^{(d+1)})$ through $x^{(d+1)}$ meets the hyperplane $\{y=(y_1,\dots,y_d)\in \mathbb{R}^d:y_1=1-c_as^{-2/(d+1)}\}$ outside the ball $B^d(0,1)$.  Note that this implies that $(B^d(0,1)\backslash 	M_s^x)\cap\mathrm{Vis}(z^{(d+1)},\mathrm{Conv}(z^{(1)},\dots,z^{(d+1)}))=\emptyset$ for all $t\in[0,1/2]$. With \eqref{dist_xz} and \eqref{h_i_l} it holds that 
					\begin{align}
						\mathrm{dist}(\bar{x}_{d+1},T_i)&\geq h_i-\lVert \bar{x}_{d+1}-\bar{z}_{d+1}\rVert\geq c_{h,l}\sqrt{a}s^{-1/(d+1)}-2a^2s^{-2/(d+1)}\nonumber\\&\geq \tilde{c}_{h,l}\sqrt{a}s^{-1/(d+1)}\label{eq:dist(x_d+1,T_i)}
					\end{align}
					for $\tilde{c}_{h,l}=\frac{c_{h,l}}{2}$, $i\in\{1,\dots,d\}$ and $s$ sufficiently large.

					Let $B_C^{d-1}=B^{d}(\bar{x}_{d+1},\tilde{c}_{h,l}\sqrt{a}s^{-1/(d+1)})\cap H$. Then, because of \eqref{eq:dist(x_d+1,T_i)}, $$\mathrm{Vis}(x^{(d+1)},\mathrm{Conv}(z^{(1)},\dots,z^{(d)},x^{(d+1)}))$$ is a subset of the visibility region at $x^{(d+1)}$ of the smallest cone $K$ with apex $x^{(d+1)}$ that contains $B_C^{d-1}$. Hence, if we choose $c_a>0$ such that $(B^d(0,1)\backslash 	M_s^x)\cap\mathrm{Vis}(x^{(d+1)},K)=\emptyset$, then also $(B^d(0,1)\backslash 	M_s^x)\cap \mathrm{Vis}(x^{(d+1)},\mathrm{Conv}(z^{(1)},\dots,z^{(d)},x^{(d+1)}))=\emptyset$.
				Because of symmetry it suffices to ensure that the line through $x^{(d+1)}$ and
				\begin{align*}
					\hat{y}&=(1-(a+a^2)s^{-2/{(d+1)}})e_1+\tilde{c}_{h,l}\sqrt{a}s^{-1/(d+1)}e_2
				\end{align*} meets $H$ outside of $B^d(0,1)$. A point $\hat{x}_\gamma$ on the line through $x^{(d+1)}$ and $\hat{y}$ can be described by
				\begin{align}\label{x_gamma2}
					\hat{x}_\gamma=(1-as^{-2/(d+1)})e_1+\gamma(-a^2s^{-2/(d+1)}e_1+\tilde{c}_{h,l}\sqrt{a}s^{-1/(d+1)}e_2)
				\end{align}
				for $\gamma\in\mathbb{R}$. To determine a possible constant $c_a>0$ we need a $\gamma>1$ such that the point $x_\gamma=(x_{\gamma,1},\dots,x_{\gamma,d})$ fulfils $\lVert x_\gamma\rVert>1$. If $x_{\gamma,1}>1-\frac{1}{2}\sum_{i=2}^dx_{\gamma,i}^2\geq \sqrt{1-\sum_{i=2}^dx_{\gamma,i}^2}$, it holds that $x_\gamma\notin B^d(0,1)$, i.e.\ $\hat{x}_{\gamma}\notin B^d(0,1)$ if
				\begin{align}
					1-(a+\gamma a^2)s^{-2/(d+1)}>1-\frac{\gamma^2}{2}\tilde{c}_{h,l}^2a s^{-2/(d+1)} 
					\quad\quad\Longleftrightarrow\quad\quad \frac{\gamma^2}{2}\tilde{c}_{h,l}^2-\gamma a-1>0.\label{eq:inequality_gamma2}
				\end{align}
				This inequality is fulfilled for $\gamma>1$ large enough independently of $s$. 
				Hence, inserting a possible $\hat{\gamma}>1$, which fulfils \eqref{eq:inequality_gamma2}, in \eqref{x_gamma2} provides that $c_a>0$ can be chosen independently from $s$ as $c_a=a+\hat{\gamma}a^2$. From now on let $s$ be sufficiently large such that  $1-c_as^{-2/(d+1)}\geq \rho_0$.
			
			Due to translation invariance, the same configuration of sets can be constructed for any $x\in B^d(0,1)$ with $\lVert x\rVert=1-(a+ta^2)s^{-2/(d+1)}$ for $t\in[0,1/2]$ by defining $M_s^x,C_1^x,\dots,C_{d}^x$ for each $x$ as the suitable rotated regions.  
			Define
			\begin{align*}
				A=\{x\in B^d(0,1):\lVert x\rVert= 1-(a+ta^2)s^{-2/(d+1)}\text{ and }t\in [0,1/2]\}.
			\end{align*}
			Combining our previous considerations leads to
			\begin{align*}
				\alpha_1D_x\widetilde{A}_{p_1}+\alpha_2D_x\widetilde{A}_{p_2}=s(\alpha_1\Delta_{p_1}+\alpha_2\Delta_{p_2})
			\end{align*}
			if
			\begin{align*}
				\eta_s(C_i^x)=1 \quad\text{ for }\quad i\in\{1,\dots,d\} \quad\text{ and }\quad\eta_s\Big(M_s^x\backslash\bigcup\limits_{i=1}^{d}C_i^x\Big)=0.
			\end{align*}
			for $s$ sufficiently large.
			 Together with \eqref{alphadelta1+alphadelta2} we obtain for $s$ sufficiently large
			\allowdisplaybreaks
			\begin{align}
				&\E\left[\int\lvert \alpha_1D_x\widetilde{A}_{p_1}+\alpha_2D_x\widetilde{A}_{p_2}\rvert ^2\;\mathrm{d}\lambda(x)\right]\geq \E\left[\int_{A}	\lvert \alpha_1D_x\widetilde{A}_{p_1}+\alpha_2D_x\widetilde{A}_{p_2}\rvert ^2\;\mathrm{d}\lambda(x)\right]\nonumber \\&\geq s\int_{A} \mathbb{P}(\lvert \alpha_1D_x\widetilde{A}_{p_1}+\alpha_2D_x\widetilde{A}_{p_2}\rvert\geq C_\alpha ) C_\alpha^2\;\mathrm{d}x\nonumber\\
				&\geq C_\alpha^2 s\int_{A}\mathbb{P}\Big(\eta_s\Big(M_s^x\backslash\bigcup\limits_{i=1}^{d}C_i^x\Big)=0,\eta_s(C_1^x)=1,\dots,\eta_s(C_{d}^x)=1\Big) \;\mathrm{d}x\nonumber\\
				&= C_\alpha^2 s\int_{A}\mathbb{P}\Big(\eta_s\Big(M_s^x\backslash\bigcup\limits_{i=1}^{d}C_i^x\Big)=0\Big)\prod_{i=1}^{d}\mathbb{P}(\eta_s(C_i^x)=1) \;\mathrm{d}x.\label{eq:prob polytopes}
			\end{align}
		Due to the definition of $C_i^x$ we know that $\lambda_d(C_i^x)=\varepsilon_ha^2(\varepsilon_\ell\sqrt{a})^{d-1} s^{-1}$ for $i\in\{1,\dots, d\}$, i.e. the volume of the sets $C_i^x$ is of order $s^{-1}$. 
		
		For $\lambda_d(M_s^x)$ we consider at first the radius $r$ of the $(d-1)$-dimensional ball $B_C=\{y=(y_1,\dots,y_d)\in B^d(0,1):y_1= 1-c_as^{-2/(d+1)}\}$. This radius fulfils $r^2+(1-c_as^{-2/(d+1)})^2=1$. Hence,
		\begin{align*}
			r^2=2c_as^{-2/(d+1)}-c_a^2s^{-4/(d+1)}\leq 2c_as^{-2/(d+1)}
		\end{align*}
		and therefore
		$$
			\lambda_d(M_s^x)\leq \kappa_{d-1}r^{d-1}c_as^{-2/(d+1)}\leq\tilde{c}_as^{-1}
		$$
		for $\tilde{c}_a=\kappa_{d-1}c_a(\sqrt{2c_a})^{d-1}$. Thus,
		$\lambda_d(M_s^x\backslash\bigcup\limits_{i=1}^{d}C_i^x)$ is at most of order $s^{-1}$. Therefore, since the Poisson process has intensity $s$, the order of the whole term in \eqref{eq:prob polytopes} can be bounded from below by a multiple of $s^{-1}\lambda_d(A)$, where 
		\begin{align*}
			\lambda_d(A)= \kappa_d \bigg((1-as^{-2/(d+1)})^d-\Big(1-\Big(a+\frac{a^2}{2}\Big)s^{-2/(d+1)}\Big)^d \bigg) \geq \tilde{c} s^{-2/(d+1)}
		\end{align*}
	for a suitable constant $\tilde{c}>0$ and $s$ sufficiently large. Altogether we have
	\begin{align*}
		\E\left[\int\lvert \alpha_1D_x\widetilde{A}_{p_1}+\alpha_2D_x\widetilde{A}_{p_2}\rvert ^2\;\mathrm{d}\lambda(x)\right]\geq \tilde{C} s^{(d-1)/(d+1)}
	\end{align*}
for some constant $\tilde{C}>0$ and $s$ sufficiently large.

		Next, we check condition \eqref{condition}. Due to Lemma \ref{lemma_scores} we can apply the results in \cite[Lemma 5.5 and Lemma 5.9]{LSY19}, i.e.\ there exists a constant $C>0$ satisfying
			\begin{align}
				\mathbb{E}\lvert D_{x}\widetilde{A}_{p_i}(\eta_s\cup U)\rvert^{5} \leq C
				\label{lemma5.52}
			\end{align}
			for $U\subset B^d(0,1)$ with $\lvert U\rvert\leq 1$ and for any $\beta>0$,
			\begin{align}
				s\int\mathbb{P}(D_{x,y}^2\widetilde{A}_{p_i}\neq 0)^\beta\;\mathrm{d}y\leq C_\beta \exp[-c_\beta sd_{\max}(x,\partial B^d(0,1))^{(d+1)}]
				\label{lemma5.92}
			\end{align}
			for some constants $C_\beta, c_\beta>0$ and $x\in B^d(0,1)$. Note that the statements of \cite[Lemma 5.9]{LSY19} contain typos since the exponent $\alpha$ of ${\rm d}_s(x_1,K)$ is missing in the upper bounds.
			Using \eqref{lemma5.52}, the Hölder inequality and Jensen's inequality provides
			\begin{align*}
				&\mathbb{E}\lvert D_{x,y}^2\widetilde{A}_{p_i}\rvert^2= \mathbb{E}\left[\lvert D_{x,y}^2\widetilde{A}_{p_i}\rvert^2\mathbbm{1}\{D_{x,y}^2\widetilde{A}_{p_i}\neq 0\}\right]\\
				&\leq (\mathbb{E}\lvert D_{x,y}^2\widetilde{A}_{p_i}\rvert^{5})^{2/5}\mathbb{P}(D_{x,y}^2\widetilde{A}_{p_i}\neq 0)^{3/5}\\
				&=	\mathbb{E}\lvert D_{x}\widetilde{A}_{p_i}(\eta_s\cup\{y\})-D_{x}\widetilde{A}_{p_i}(\eta_s)\rvert^{5})^{2/5} \mathbb{P}(D_{x,y}^2\widetilde{A}_{p_i}\neq 0)^{3/5}\\
				&\leq \left(2^{4}\left(\mathbb{E}\lvert D_{x}\widetilde{A}_{p_i}(\eta_s\cup\{y\})\rvert^{5}+\mathbb{E}\lvert D_{x}\widetilde{A}_{p_i}(\eta_s)\rvert^{5}\right)\right)^{2/5}\mathbb{P}(D_{x,y}^2\widetilde{A}_{p_i}\neq 0)^{3/5}\\
				&\leq 4C^{2/5}\mathbb{P}(D_{x,y}^2\widetilde{A}_{p_i}\neq 0)^{3/5}
			\end{align*}
			for $i\in\{1,2\}$. Therefore, using Jensen's inequality and \eqref{lemma5.92}, it follows
			\allowdisplaybreaks
			\begin{align*}
				&\mathbb{E}\left[\int_{B^d(0,1)}\int_{B^d(0,1)} \left(D_{x,y}^2\sum_{i=1}^{2}\alpha_i\widetilde{A}_{p_i}\right)^2\;\mathrm{d}\lambda(x)\;\mathrm{d}\lambda(y)\right]\\&
				\leq 2\sum_{i=1}^{2}\alpha_i^2\int_{B^d(0,1)}\int_{B^d(0,1)} 	\mathbb{E}\lvert D_{x,y}^2\widetilde{A}_{p_i}\rvert^2\;\mathrm{d}\lambda(x)\;\mathrm{d}\lambda(y)\\&
				\leq 2\sum_{i=1}^{2}\alpha_i^2 s\int_{B^d(0,1)} s\int_{B^d(0,1)} 4C^{2/5}\mathbb{P}(D_{x,y}^2\widetilde{A}_{p_i}\neq 0)^{3/5}\;\mathrm{d}x\;\mathrm{d}y\\
				&\leq 8 \sum_{i=1}^{2}\alpha_i^2 C^{2/5} s\int_{B^d(0,1)} C_{3/5} \exp[-c_{3/5} s d_{\max}(x,\partial B^d(0,1))^{(d+1)}]\;\mathrm{d}x\\
				&\leq c_{\alpha}^{(1)} s\int_{B^d(0,1)} \exp[-c_{3/5} s (1-\lVert x\rVert)^{(d+1)/2}]\;\mathrm{d}x\\
				&\leq c_{\alpha}^{(2)} s\int_{0}^1 \exp[-c_{3/5} s (1-r)^{(d+1)/2}]\;\mathrm{d}r = c_{\alpha}^{(2)} s\int_{0}^1 \exp[-c_{3/5} s u^{(d+1)/2}]\;\mathrm{d}u \\
				&\leq c_{\alpha}^{(3)} s \int_{0}^{(c_{3/5}s)^{2/(d+1)}} e^{-t^{(d+1)/2}} s^{-2/(d+1)}\mathrm{dt}
				\leq c_{\alpha}^{(4)} ss^{-2/(d+1)} = c_{\alpha}^{(4)} s^{(d-1)/(d+1)}
			\end{align*}
			for suitable constants $c_{\alpha}^{(i)}>0$ for $i\in\{1,2,3,4\}$ and $s$ sufficiently large. This shows  together with Theorem \ref{thm:varbound} that $\Var[\alpha_1\widetilde{A}_{p_1}+\alpha_2\widetilde{A}_{p_2}]\geq cs^{(d-1)/(d+1)}$ for a suitable constant $c>0$. Now \eqref{eqn:approximation_A_p} yields a lower bound of the same order for $\alpha_1sA_{p_1}+\alpha_2sA_{p_2}$, which completes the proof.
		\end{proof}
		As a consequence of the lower variance bound in Theorem \ref{theorem:randompoly_var}, one can derive bounds for the multivariate normal approximation of two $L^p$ surface areas. Therefore, we define the $d_{convex}$-distance. Let $\mathcal{I}$ be the set of indicators of measurable convex sets in $\mathbb{R}^2$. Then, for the two-dimensional random vectors $Y$ and $Z$ the $d_{convex}$-distance is defined as
		\begin{align*}
			d_{convex}(Y,Z)=\sup_{h\in \mathcal{I}} \lvert\E[h(Y)]-\E[h(Z)]\rvert.
		\end{align*}
		
		\begin{theorem}
			\label{theorem:randompoly_limit}
			Let $(A_{p_1},A_{p_2})$ be the vector of $L^p$ surface areas for $p_1,p_2\in[0,1]$ with $p_1\neq p_2$. Denote by $\Sigma(s)$ the covariance matrix of $s^{(d+3)/(2(d+1))}(A_{p_1},A_{p_2})$. Let $N_{\Sigma(s)}$ be a centred Gaussian random vector with covariance matrix $\Sigma(s)$. Then there exists a constant $c>0$ such that
			\begin{align*}
				d_{convex}(s^{(d+3)/(2(d+1))}(A_{p_1}-\E[A_{p_1}],A_{p_2}-\E[A_{p_2}]), N_{\Sigma(s)})\leq c s^{-(d-1)/(2(d+1))}
				\end{align*}
			for $s\geq 1$.
		\end{theorem}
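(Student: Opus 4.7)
The plan is to deduce Theorem \ref{theorem:randompoly_limit} by combining the lower variance bound from Theorem \ref{theorem:randompoly_var} with a general quantitative multivariate central limit theorem for stabilising Poisson functionals, namely the bound for the $d_{convex}$-distance given in \cite[Section 3]{SY21}. Since $sA_{p_i}$ and $\widetilde{A}_{p_i}$ agree except on an event of probability at most $\hat{C}_{p_i}\exp[-\hat{c}_{p_i}s]$ and their means and variances differ by similarly exponentially small quantities (by \eqref{eqn:approximation_A_p}), the problem reduces to establishing the bound for the centred vector $s^{-1/2\cdot(d-1)/(d+1)}(\widetilde{A}_{p_1}-\mathbb{E}\widetilde{A}_{p_1},\widetilde{A}_{p_2}-\mathbb{E}\widetilde{A}_{p_2})$, since exponential tails are absorbed into the polynomial rate.

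First, I would verify the structural hypotheses required by the multivariate CLT in \cite{SY21}. The functionals $\widetilde{A}_{p_1}$ and $\widetilde{A}_{p_2}$ are sums of scores $\tilde{\xi}_s$ of the form \eqref{eq:sum_of_scores}, and Lemma \ref{lemma_scores} provides exactly the three ingredients needed: exponential stabilisation with $\alpha_{stab}=d+1$, exponential decay with distance to the boundary $\partial B^d(0,1)$ with $\alpha_K=d+1$, and a $q$-th moment condition for every $q\geq 1$. The remaining hypothesis, non-degeneracy of the scaled covariance matrix, is precisely Theorem \ref{theorem:randompoly_var}: for any $(\alpha_1,\alpha_2)\in\mathbb{R}^2\setminus\{0\}$, $\Var[\alpha_1\widetilde{A}_{p_1}+\alpha_2\widetilde{A}_{p_2}]\gtrsim s^{(d-1)/(d+1)}$ for $s$ sufficiently large, which is equivalent to the smallest eigenvalue of the covariance matrix of $s^{(d+3)/(2(d+1))}(A_{p_1},A_{p_2})$ being bounded from below by a positive constant.

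Next, I would plug the stabilisation and boundary-decay exponents into the quantitative $d_{convex}$-bound of \cite[Theorem 3.1 or Theorem 3.2]{SY21}. The upper variance bound for $\widetilde{A}_{p_i}$ is of order $s^{(d-1)/(d+1)}$ by the same argument as for the $(d-1)$-st intrinsic volume in \cite[Section 3.4]{LSY19}, so the scaled vector has covariance entries of constant order. The univariate-type rate $\Var[\widetilde{A}_{p_i}]^{-1/2}$ then translates to $s^{-(d-1)/(2(d+1))}$, and the general multivariate bound contributes only an additional factor depending on the (bounded) smallest eigenvalue, yielding the claimed rate $cs^{-(d-1)/(2(d+1))}$.

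The main obstacle is a bookkeeping one: matching the precise form of the moment and stabilisation assumptions in the selected theorem from \cite{SY21}, verifying that the error terms appearing there (typically polynomial in $s$ multiplied by negative powers of the smallest covariance eigenvalue) indeed collapse to $s^{-(d-1)/(2(d+1))}$ after inserting $\alpha_{stab}=\alpha_K=d+1$ and the variance scaling, and finally transferring the bound from $\widetilde{A}_{p_i}$ back to $sA_{p_i}$ using \eqref{eqn:approximation_A_p}, which only adds an exponentially small error and is therefore harmless.
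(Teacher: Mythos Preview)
Your overall strategy matches the paper's: apply the multivariate $d_{convex}$ bound from \cite{SY21} to the score-sum vector $(\widetilde{A}_{p_1},\widetilde{A}_{p_2})$ using Lemma~\ref{lemma_scores}, then transfer back to $(sA_{p_1},sA_{p_2})$ via \eqref{eqn:approximation_A_p}. Two points, however, need more care than you indicate.

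First, the relevant result in \cite{SY21} is Theorem~4.1~c) there, not anything in Section~3; the random polytope requires the boundary-decay framework (scores decay exponentially with distance to $\partial B^d(0,1)$), and the paper explicitly verifies the three side conditions of that theorem: the integral $s\int_{B^d(0,1)}\exp[-c_I s\, d_{\max}(x,\partial B^d(0,1))^{d+1}]\,\mathrm{d}x\lesssim s^{(d-1)/(d+1)}$, the closeness of $\Sigma(s)$ to the covariance of the tilde vector, and a uniform bound on $\|\Sigma(s)^{-1}\|_{op}$.

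Second, and more substantively, your assertion that Theorem~\ref{theorem:randompoly_var} ``is equivalent to the smallest eigenvalue of $\Sigma(s)$ being bounded from below by a positive constant'' is not immediate. Theorem~\ref{theorem:randompoly_var} gives, for each fixed $\alpha\neq 0$, a constant $c_\alpha>0$ and a threshold $s_\alpha$ such that $\alpha^T\Sigma(s)\alpha\geq c_\alpha$ for $s\geq s_\alpha$; both depend on $\alpha$ (the proof even treats the direction $\alpha_1=-\alpha_2$ by a separate construction), so one cannot directly infer $\inf_{\|\alpha\|=1}\alpha^T\Sigma(s)\alpha\geq c>0$ uniformly in $s$. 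The paper closes this gap by a compactness argument: one first shows the entries of $\Sigma(s)$ are uniformly bounded (via the Poincar\'e inequality together with \eqref{lemma5.52} and the integral bound), then argues by contradiction that if $\|\Sigma(s_n)^{-1}\|_{op}\to\infty$ along some sequence, a convergent subsequence $\Sigma(s_{n_k})\to\Sigma$ exists, and Theorem~\ref{theorem:randompoly_var} forces $\alpha^T\Sigma\alpha>0$ for every $\alpha\neq 0$, contradicting degeneracy of $\Sigma$.

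Finally, the transfer step is not quite a one-liner: after the triangle inequality one is left with $d_{convex}(N_{\Sigma(s)},N_{\Sigma(s)}+\mathbb{E}[Z_s]-\mathbb{E}[\widetilde{Z}_s])$, which the paper controls by standardising and invoking the Gaussian isoperimetric-type bound of \cite[Corollary~3.2]{BR10}, again using the uniform bound on $\|\Sigma(s)^{-1}\|_{op}$.
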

		
		\begin{proof}
		For $s\geq 1$ we define $\widetilde{Z}_s =  s^{-(d-1)/(2(d+1))}(\widetilde{A}_{p_1},\widetilde{A}_{p_2})$. From \cite[Theorem 4.1 c)]{SY21} with $\tau=(d-1)/(2(d+1))$, whose assumptions are satisfied by Lemma \ref{lemma_scores}, it follows that
		\begin{equation}\label{eqn:approximation_d_convex}
		d_{convex}(\widetilde{Z}_s-\mathbb{E}[\widetilde{Z}_s],N_{\Sigma(s)}) \leq \tilde{c} s^{-(d-1)/(2(d+1))}
		\end{equation}
		for $s\geq 1$ with a constant $\tilde{c}>0$ if we can check that
\begin{itemize}
\item [(i)] for any constant $c_I>0$ there exists a constant $\tilde{c}_I>0$ such that
\begin{align*}
s\int_{B^d(0,1)}\exp[-c_Isd_{\max}(x,\partial B^d(0,1))^{(d+1)}]\;\mathrm{d}x\leq \tilde{c}_I s^{(d-1)/(d+1)}
\end{align*}
for $s\geq 1$,
\item [(ii)] $|(\Sigma(s))_{u,v} - \Cov(\widetilde{Z}_s^{(u)},\widetilde{Z}_s^{(v)})|$ is at most of order $s^{-(d-1)/(2(d+1))}$ for all $u,v\in\{1,2\}$,
\item [(iii)] $\|\Sigma(s)^{-1}\|_{op}$ is uniformly bounded for $s$ sufficiently large, where $\lVert\cdot\rVert_{\text{op}}$ denotes the operator norm.
\end{itemize}
Analogously to the calculation at the end of the proof of Theorem \ref{theorem:randompoly_var} one can show (i), while (ii) follows from \eqref{eqn:approximation_A_p}.

In order to establish (iii), we assume that there is a subsequence $(s_n)_{n\in\mathbb{N}}$ such that $\lVert\Sigma(s_n)^{-1}\rVert_{\text{op}}\to\infty$ and $s_n\to\infty$ as $n\to\infty$. From the Poincar\'e inequality (see \eqref{eqn:Poincare}), \eqref{lemma5.52}, \cite[(5.8) in Lemma 5.10]{LSY19} and (i), one deduces that all variances and, thus, all covariances of the components of $\widetilde{Z}_s$ are uniformly bounded for $s\geq1$. By (ii) the same holds for the entries of $\Sigma(s)$. Thus, there exists a subsequence $(s_{n_k})_{k\in\mathbb{N}}$ and a matrix $\Sigma\in\mathbb{R}^{2\times 2}$ such that $\Sigma(s_{n_k})\to\Sigma$ as $k\to\infty$. From Theorem \ref{theorem:randompoly_var} it follows that $\Sigma$ is positive definite as $\alpha^T\Sigma\alpha=\lim\limits_{k\to\infty}\alpha^T\Sigma(s_{n_k})\alpha>0$ for any $\alpha\in\mathbb{R}^2\backslash\{0\}$. Thus, $\lVert\Sigma^{-1}\rVert_{\text{op}}$ is well-defined and $\Vert\Sigma(s_{n_k})^{-1}\rVert_\text{op}\to\lVert\Sigma^{-1}\rVert_{\text{op}}$ as $k\to\infty$. Since this is a contradiction to the assumption, we have shown that $\lVert\Sigma(s)^{-1}\rVert_{\text{op}}$ is uniformly bounded for $s$ sufficiently large, which is (iii) and completes the proof of \eqref{eqn:approximation_d_convex}.

Moreover, let $Z_s =  s^{(d+3)/(2(d+1))}(A_{p_1},A_{p_2})=s^{-(d-1)/(2(d+1))}(sA_{p_1},sA_{p_2})$. It follows from the triangle inequality that
\begin{align*}
& d_{convex}(Z_s-\mathbb{E}[Z_s],N_{\Sigma(s)})\\ & \leq d_{convex}(Z_s-\mathbb{E}[Z_s],\widetilde{Z}_s-\mathbb{E}[Z_s]) + d_{convex}(\widetilde{Z}_s-\mathbb{E}[Z_s],N_{\Sigma(s)}) \\
& \leq \p(Z_s\neq\widetilde{Z}_s) + d_{convex}(\widetilde{Z}_s-\mathbb{E}[\widetilde{Z}_s],N_{\Sigma(s)}+\mathbb{E}[Z_s]-\mathbb{E}[\widetilde{Z}_s]) \\
& \leq \p(Z_s\neq\widetilde{Z}_s) + d_{convex}(\widetilde{Z}_s-\mathbb{E}[\widetilde{Z}_s],N_{\Sigma(s)}) + d_{convex}(N_{\Sigma(s)},N_{\Sigma(s)}+\mathbb{E}[Z_s]-\mathbb{E}[\widetilde{Z}_s]).
\end{align*}
Since the first term on the right-hand side vanishes exponentially fast by \eqref{eqn:approximation_A_p} and the second one was treated in \eqref{eqn:approximation_d_convex}, it remains to study the third term. We have that
\begin{align*}
& d_{convex}(N_{\Sigma(s)},N_{\Sigma(s)}+\mathbb{E}[Z_s]-\mathbb{E}[\widetilde{Z}_s])\\ & = d_{convex}(N_{I},N_{I}+\Sigma(s)^{-1/2}(\mathbb{E}[Z_s]-\mathbb{E}[\widetilde{Z}_s])) \\
& \leq \sup_{K\subseteq\mathbb{R}^2\text{ convex}} \p({\rm dist}(N_I,\partial K)\leq \|\Sigma(s)^{-1/2}(\mathbb{E}[Z_s]-\mathbb{E}[\widetilde{Z}_s])\| ) \\
& \leq \sup_{K\subseteq\mathbb{R}^2\text{ convex}} \p({\rm dist}(N_I,\partial K)\leq \|\Sigma(s)^{-1}\|^{1/2}_{op}\|\mathbb{E}[Z_s]-\mathbb{E}[\widetilde{Z}_s]\| ),
\end{align*}
where $N_I$ is distributed according to a two-dimensional standard normal distribution. From \cite[Corollary 3.2]{BR10} one obtains that the right-hand side is bounded by a constant times
$$
 \|\Sigma(s)^{-1}\|^{1/2}_{op}\|\mathbb{E}[Z_s]-\mathbb{E}[\widetilde{Z}_s]\|.
$$
Now (iii) from above and \eqref{eqn:approximation_A_p} imply that this expression vanishes exponentially fast for $s\to\infty$, which concludes the proof.
		\end{proof}
		
		Theorem \ref{theorem:randompoly_var} and Theorem \ref{theorem:randompoly_limit} especially provide a lower variance bound and a result on the multivariate normal approximation for the vector of surface area and volume of a random polytope since $A_0=dV_d$ and $A_1=S_{d-1}$, where $V_d$ and $S_{d-1}$ denote the volume and surface area, respectively. 
		
		Lower and upper variance bounds of the same order as in Theorem \ref{theorem:randompoly_var} were already derived for the volume in \cite{R05}. For binomial input, analogous variance bounds for intrinsic volumes were shown in \cite{BFV10}. The case of an underlying Poisson process and, in particular, variance asymptotics for intrinsic volumes were discussed in \cite{CSY13}. We expect that variance asymptotics for the $L^p$ surface area and especially the positivity of the asymptotic variance can be derived using the same method as in \cite{CSY13}. However, the proof in \cite{CSY13} cannot be directly transferred to the linear combination of two $L^p$ surface areas because for a linear combinations with scalars of different sign the monotonicity argument in \cite[p. 100]{CSY13} does not work.

In \cite{Grygierek21} the multivariate normal approximation of the vector of all intrinsic volumes and all numbers of lower-dimensional faces of the convex hull of Poisson points in a smooth convex body is considered. As in Theorem \ref{theorem:randompoly_limit}, one compares with a multivariate normal distribution with the same covariance matrix, but as the so-called $d_3$-distance is studied no information about the regularity of the asymptotic covariance matrix is required. In the same work positive linear combinations of intrinsic volumes were considered since for coefficients with different signs it could not be ensured that the corresponding asymptotic variance is positive. For the special case of volume and surface area and an underlying ball, this problem is resolved by Theorem \ref{theorem:randompoly_var}. In contrast to the findings in \cite{Grygierek21}, Theorem \ref{theorem:randompoly_limit} deals with non-smooth test functions and the obtained bounds are of a better order since a logarithmic factor could be removed. The rates of convergence derived in \cite[Section 3]{LSY19} for the univariate normal approximation of intrinsic volumes in Kolmogorov distance are also of the order $s^{-(d-1)/(2(d+1))}$.

\begin{rema}
The results of this section prevail if we assume that the Poisson processes have underlying intensity measures $s\mu$ for $s\geq 0$, where $\mu$ is a measure with a density $g:B^d(0,1)\to[0,\infty)$ satisfying $\underline{c} \leq g(x) \leq \overline{c}$ for all $x\in B^d(0,1)$ and some constants $\underline{c},\overline{c}>0$ (see also Remark \ref{rem:inhomogeneous_spatial_random_graphs}). Moreover, we expect that it is possible to replace the $d$-dimensional unit ball by a compact convex non-empty subset of $\mathbb{R}^d$ with $C^2$-boundary and positive Gaussian curvature. Since the boundaries of these sets as the boundary of the unit ball are locally between two paraboloids, we believe that similar arguments as in \cite[Subsection 3.4]{LSY19} allow to prove our results for this larger class of underlying bodies. However, we did not pursue this approach in order to not further increase the length and complexity of the proofs in this section.
\end{rema}	
		
		\section{Excursion sets of Poisson shot noise processes}\label{sec:excursion_sets}
		
Excursion sets of random fields are an important topic of probability theory and have many applications, for example in biology or engineering. For an introduction into this topic see for instance \cite{AT07}. The most common underlying random fields are Gaussian random fields, but a further prominent choice are Poisson shot noise processes as we consider in this section.	
		
		For a stationary Poisson process $\eta$ on $\mathbb{R}^d$ with intensity measure $\lambda_d$ and an integrable function $g:\mathbb{R}^d\to\mathbb{R}$ let 
		\begin{align}\label{equation:Poisson shot noise}
			f_{\eta}(x)=\sum_{y\in\eta}g(x-y)
		\end{align}
		for $x\in\mathbb{R}^d$. We denote $(f_\eta(x))_{x\in\mathbb{R}^d}$ as Poisson shot noise process and note that it is translation invariant. Its excursion set at level $u>0$ consists of all $x\in\mathbb{R}^d$ such that $f_\eta(x)\geq u$. The corresponding volume of the excursion set in an observation window $B^d(0,s)$ with $s\geq 1$ is given by
		\begin{align*}
			F_s=\lambda_d(\{x\in B^d(0,s):	f_{\eta}(x)\geq u\}).
		\end{align*}
Now one is interested in the behaviour of $F_s$ as $s\to\infty$, i.e.\ if the observation window is increased. In \cite{BST12} variance asymptotics and central limit theorems for the volume of excursion sets of quasi-associated random fields were considered, which include a large class of Poisson shot noise processes (see \cite[Proposition 1]{BST12}). More recently, asymptotics for the variance and central limit theorems for the volume, the perimeter and the Euler characteristic of the excursion sets of Poisson shot-noise processes were shown in \cite[Section 4]{L19}, while the paper \cite{LPY20} studied the same questions for smoothed versions of volume and perimeter.	
		
We use the following assumption on the kernel function $g$.
		\begin{assu}
			\label{assumption:g}
			 There exist constants $\underline{c}_g,\overline{c}_g,\delta,\gamma>0$ and $c_g\geq 1$ such that $\delta+d/2>\gamma\geq\delta>3d$ and
			 \begin{align*}
			 	 \underline{c}_g\lVert x \rVert^{-\gamma}\leq \lvert g(x)\rvert\leq \overline{c}_g\lVert x\rVert^{-\delta} 
			 \end{align*}
		 for all $x\in \mathbb{R}^d$ with $\lVert x\rVert\geq c_g$.
		\end{assu}

By using our Theorem \ref{thm:varbound}, we derive lower bounds for variances, which complement the findings from \cite{BST12,L19}; see the discussion below for more details.	
		
		\begin{theorem}
			\label{theorem:PSN process}
		Let $g:\mathbb{R}^d\to\mathbb{R}$ be a continuous function with $g(0)>0$.
			\begin{enumerate}
				\item[a)] If $g$ fulfils Assumption \ref{assumption:g}, there exists a constant $c>0$ such that
				\begin{align*}
					\mathrm{Var}[F_s]\geq cs^d
				\end{align*}
				for $s\geq 1$. \label{poisson_shot_noise_a}
				\item[b)] Assume that $g$ has compact support $S$.
				Then, there exists a constant $c>0$ such that
				\begin{align*}
					\mathrm{Var}[F_s]\geq cs^d
				\end{align*}
				for $s\geq 1$.
			\end{enumerate}
		\end{theorem}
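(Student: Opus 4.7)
The plan is to apply Theorem~\ref{thm:varbound} with $\lambda=\lambda_d$. The task reduces to establishing
\begin{equation*}
\mathbb{E}\Bigl[\int\!\!\int (D_{x,y}^2 F_s)^2\, \mathrm{d}x\, \mathrm{d}y\Bigr] \leq C s^d
\quad \text{and} \quad
\mathbb{E}\Bigl[\int (D_x F_s)^2\, \mathrm{d}x\Bigr] \geq c_1 s^d
\end{equation*}
with constants independent of $s$. Then \eqref{condition} holds with $\alpha = C/c_1$ and Theorem~\ref{thm:varbound} yields $\Var[F_s]\geq \frac{4c_1}{(\alpha+2)^2} s^d$.

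Writing $F_s = \int_{B^d(0,s)} \mathbbm{1}\{f_\eta(z)\geq u\}\, \mathrm{d}z$, one has $D_x F_s = \int \Phi_1(z,x)\, \mathrm{d}z$ and $D_{x,y}^2 F_s = \int \Phi_2(z,x,y)\, \mathrm{d}z$ with $\Phi_1(z,x) = \mathbbm{1}\{f_\eta(z) + g(z-x)\geq u\} - \mathbbm{1}\{f_\eta(z)\geq u\}$ and $\Phi_2(z,x,y) = D_y \Phi_1(z,x)$. A case analysis on the signs of $g(z-x)$ and $g(z-y)$ shows that $\Phi_2(z,x,y)\neq 0$ forces $f_\eta(z)$ into a set of Lebesgue measure at most $2\min(|g(z-x)|,|g(z-y)|)$. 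Under Assumption~\ref{assumption:g} the characteristic function of $f_\eta(z)$ decays at rate $\exp(-c|\xi|^{d/\gamma})$ (thanks to the lower bound $\underline{c}_g|w|^{-\gamma}\leq |g(w)|$), and a similar argument applies under compact support; hence $f_\eta(z)$ admits a density bounded by some $M<\infty$ uniformly in $z$, which yields the key estimate $\mathbb{P}(\Phi_2(z,x,y)\neq 0)\leq 2M\min(|g(z-x)|,|g(z-y)|)$.

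For the upper estimate, part~(b) is immediate from compact support: $\Phi_2\equiv 0$ when $|x-y|>2R$ and $|D_{x,y}^2 F_s|\leq \kappa_d R^d$ deterministically. In part~(a), applying Cauchy--Schwarz pointwise to $\mathbb{E}[\Phi_2(z_1,x,y)\Phi_2(z_2,x,y)]$ together with the density bound gives $\mathbb{E}[(D_{x,y}^2 F_s)^2]\leq 2M\bigl(\int \sqrt{\min(|g(z-x)|,|g(z-y)|)}\,\mathrm{d}z\bigr)^2$; splitting the $z$-integration by whether $|z-x|\leq |z-y|$ and exploiting $\max(|z-x|,|z-y|)\geq |x-y|/2$ together with $|g(w)|\leq \overline{c}_g|w|^{-\delta}$ produces a decay of the form $\mathbb{E}[(D_{x,y}^2 F_s)^2]\leq C(1+|x-y|)^{-(\delta-2d)}$, which is integrable in $y$ because $\delta-2d>d$ by $\delta>3d$. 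An analogous decay in $x$ outside $B^d(0,2s)$ gives the $O(s^d)$ bound.

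The lower bound is the crux. The plan is to construct, for each $x\in B^d(0,s/2)$, an event $A_x\in\sigma(\eta)$ depending only on $\eta$ restricted to a bounded neighborhood of $x$, with $\mathbb{P}(A_x)\geq c_A>0$ independent of $s$ (by stationarity) and $|D_x F_s|\geq c_D>0$ on $A_x$; integration over the bulk then yields the required $c_1 s^d$. Pick $\rho>0$ with $g(w)\geq g(0)/2$ on $B^d(0,\rho)$. If $u\leq g(0)/2$, take $A_x=\{\eta(B^d(x,R_0))=0\}$ for $R_0$ large (adjoined with a tail-control event in part~(a)); then $f_\eta$ is approximately $0$ near $x$, so adding $x$ crosses the threshold on a set of positive deterministic volume. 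If $u>g(0)/2$, instead condition on a prescribed number of Poisson points placed in disjoint small boxes near $x$ so that $f_\eta(x)\in [u-g(0)/2+\epsilon_0,u-\epsilon_0]$ for some $\epsilon_0>0$; continuity of $g$ preserves this range on a deterministic ball $B^d(x,\tau)$, and adding $x$ pushes $f_\eta$ above $u$ there. The main obstacle is the second regime in part~(a), where the far-field contribution $\sum_{y\notin B^d(x,R_0)}g(z-y)$ must be controlled using $\delta>3d$ in order to ensure that the prescribed range of $f_\eta(x)$ is hit with positive probability.
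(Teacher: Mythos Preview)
Your overall architecture is the same as the paper's, and your lower-bound construction and your part~(b) upper bound are essentially correct. There are, however, two issues.

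\medskip
\textbf{A genuine gap in the part (a) upper bound.} Your Cauchy--Schwarz step yields
\[
\mathbb{E}\bigl[(D_{x,y}^2F_s)^2\bigr]\ \leq\ C\,(1+|x-y|)^{-(\delta-2d)},
\]
which is indeed integrable in $y-x$ once $\delta>3d$. But this bound is translation invariant in $(x,y)$: after integrating over $y$ you are left with a constant, and the remaining integral over $x\in\mathbb{R}^d$ diverges. Your appeal to ``analogous decay in $x$ outside $B^d(0,2s)$'' does not close the gap under the hypothesis $\delta>3d$; if you track the exponents you will find you need $\delta>4d$. The paper avoids this by using the asymmetric H\"older/interpolation bound
\[
\mathbb{E}\bigl[\Phi_2(z_1,x,y)\Phi_2(z_2,x,y)\bigr]\ \leq\ 4\,\mathbb{P}(\Phi_2(z_1)\neq 0)^{2/3}\,\mathbb{P}(\Phi_2(z_2)\neq 0)^{1/3}
\]
and then $\min\{a,b\}\leq\sqrt{ab}$ on the $2/3$-factor, producing three factors $|g(z_1-x)|^{1/3}|g(z_1-y)|^{1/3}|g(z_2-x)|^{1/3}$. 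These form a chain $y\!\to\! z_1\!\to\! x\!\to\! z_2$ that lets one integrate out $y$, $z_2$, $x$ successively via $\int|g|^{1/3}<\infty$ (this is exactly where $\delta>3d$ enters), leaving only $\int_{B^d(0,s)}\mathrm{d}z_1=O(s^d)$. Replacing your Cauchy--Schwarz by this $2/3,1/3$ splitting fixes the argument.

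\medskip
\textbf{A false claim (though not actually used).} You assert that ``a similar argument applies under compact support'' to give a bounded density for $f_\eta(z)$. This is wrong: with compactly supported $g$ the random variable $f_\eta(0)$ is a finite sum and in general has atoms (take $g$ an indicator). The paper explicitly notes that no density exists in case~(b). Fortunately you do not rely on this for the part~(b) upper bound, which you correctly obtain from the deterministic support constraint; just delete the density claim for~(b). Your stated decay rate $\exp(-c|\xi|^{d/\gamma})$ for the characteristic function in case~(a) is also inexact (the correct exponent is $(2\delta+d-2\gamma)/\delta$, positive precisely because $\gamma<\delta+d/2$), though this does not affect the conclusion that the density is bounded.
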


Replacing $g$ by $g(\cdot-z)$ for any $z\in\mathbb{R}^d$ leads to a translation of the Poisson shot noise field and, thus, by translation invariance, to a Poisson shot noise process with the same distribution. Thus, the assumption $g(0)>0$ is no loss of generality because any $g$ that can take positive values can be modified accordingly, while the case of a non-positive function $g$ is trivial because then the level set for $u>0$ becomes empty.

Since the volume of the excursion set can be written as integral over indicator functions, one obtains with Fubini's theorem and translation invariance of the Poisson shot noise process
\begin{align*}
\mathrm{Var}[F_s] & = \E\left[ \left( \int_{B^d(0,s)} \mathbf{1}\{f_\eta(x)\geq u\} \,\mathrm{d}x  \right)^2 \right] - \E\left[ \int_{B^d(0,s)} \mathbf{1}\{f_\eta(x)\geq u\} \,\mathrm{d}x \right]^2 \\
& = \int_{B^d(0,s)} \int_{B^d(0,s)} \p( f_\eta(x_1)\geq u, f_\eta(x_2)\geq u ) - \p( f_\eta(x_1)\geq u) \p(f_\eta(x_2)\geq u )  \; \mathrm{d}x_1 \; \mathrm{d}x_2 \\
& = \int_{\mathbb{R}^d} \lambda_d(\{ y\in\mathbb{R}^d: y,y+z\in B^d(0,s) \}) \\
& \quad \quad \quad \times \left( \p( f_\eta(0)\geq u, f_\eta(z)\geq u ) - \p( f_\eta(0)\geq u) \p(f_\eta(z)\geq u ) \right) \; \mathrm{d}z.
\end{align*}
Note that $\lambda_d(\{ y\in\mathbb{R}^d: y,y+z\in B^d(0,s) \})/\lambda_d(B^d(0,s))\leq 1$ for all $z\in\mathbb{R}^d$ and that it converges to one as $s\to\infty$ for all $z\in\mathbb{R}^d$. Thus, the dominated convergence theorem yields
$$
\lim_{s\to\infty} \frac{\mathrm{Var}[F_s]}{\lambda_d(B^d(0,s))} = \int_{\mathbb{R}^d} \p( f_\eta(0)\geq u, f_\eta(z)\geq u ) - \p( f_\eta(0)\geq u) \p(f_\eta(z)\geq u ) \; \mathrm{d}z
$$
if the integral on the right-hand side is well-defined. However, this explicit formula for the asymptotic variance does not imply the statement of Theorem \ref{theorem:PSN process} since the difference under the integral could take both negative and positive values in such a way that the integral becomes zero.

Since statements of the form that the variance is at least of the order of the volume of the observation window as in Theorem \ref{theorem:PSN process} were already proven in \cite[Proposition 1]{BST12} and \cite[Theorem 4.1]{L19}, let us compare the assumptions of Theorem \ref{theorem:PSN process} a) with those made before. In \cite[Proposition 1]{BST12}, it is required that $g$ is a bounded and uniformly continuous function on $\mathbb{R}^d$ with $\lvert g(x)\rvert\leq c\lVert x\rVert^\alpha$ for some constant $c>0$ and $\alpha>3d$ (as in our Assumption \ref{assumption:g}). A crucial difference is that we allow $g$ to take positive and negative values, while it has to be non-negative in \cite{BST12}, where this assumption might be essential since it ensures that the Poisson shot noise process is positively associated. A lower bound on the decay of $|g|$ as in Assumption \ref{assumption:g} is not present in \cite{BST12}, but we use it only to ensure the boundedness of the density of $f_\eta(0)$, which is supposed in \cite{BST12}. The result in \cite{BST12} deals with marks in the sense that in \eqref{equation:Poisson shot noise} each summand is multiplied by an i.i.d.\ copy of a non-negative random variable. It might be possible to generalise our results in this direction as well. The assumptions in \cite[Theorem 4.1]{L19} seem to be more restrictive than in our case. So it is supposed that $g$ depends only on the norm of its argument and that $|g(x)|$ has an upper bound as in Assumption $1$ but with $\delta=11d$. Instead a lower bound on $|g|$, a rather technical assumption (see (4.3) in \cite{L19}) is made, which even requires differentiability of $g$. We are not aware of any results dealing with the situation of part b) of Theorem \ref{theorem:PSN process}. The compact support implies that $f_\eta(0)$ does not possess a density.
We prepare the proof of Theorem \ref{theorem:PSN process} with the following lemma.

	\begin{lemma}
		\label{lemma_density}
		Let $g:\mathbb{R}^d\to\mathbb{R}$ be a continuous, bounded function with $g(0)>0$ that fulfils Assumption \ref{assumption:g}. Then, $f_\eta(x)$ has a bounded density for $x\in \mathbb{R}^d$.
	\end{lemma}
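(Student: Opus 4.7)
My plan is to establish boundedness of the density of $f_\eta(x)$ via Fourier inversion applied to its characteristic function. Since $\eta$ is stationary, it suffices to treat the case $x = 0$, and the explicit form of the Laplace functional of a Poisson process yields
\[
\varphi(t) := \E \exp\bigl(it f_\eta(0)\bigr) = \exp\biggl(\int_{\mathbb{R}^d}\bigl(e^{itg(z)} - 1\bigr)\,\mathrm{d}z\biggr).
\]
The inner integral converges absolutely because $|e^{itg(z)} - 1| \leq \min(|tg(z)|, 2)$, while Assumption \ref{assumption:g} gives $|g(z)| \leq \overline{c}_g\|z\|^{-\delta}$ for $\|z\|\geq c_g$ with $\delta > 3d > d$, and $g$ is bounded on $B^d(0, c_g)$ by continuity. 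Taking real parts gives $|\varphi(t)| = \exp(-I(t))$ with
\[
I(t) := \int_{\mathbb{R}^d}\bigl(1 - \cos(tg(z))\bigr)\,\mathrm{d}z \geq 0,
\]
and the problem reduces to a quantitative lower bound on $I(t)$.

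\textbf{Lower bound on $I(t)$.} The core estimate is that $I(t) \geq c|t|^{\beta}$ for $|t|$ large, with $\beta := (2\delta + d - 2\gamma)/\delta$, which is strictly positive by the hypothesis $\delta + d/2 > \gamma$. I will decompose $\{z\in\mathbb{R}^d : \|z\|\geq c_g\}$ into geometric annuli $A_k := \{z : R_k \leq \|z\| < R_{k+1}\}$ with $R_k := c_g(1+\varepsilon)^k$ for a small fixed $\varepsilon \in (0,1)$. On $A_k$, Assumption \ref{assumption:g} supplies the pointwise bounds $|g(z)| \leq \overline{c}_g R_k^{-\delta}$ and $|g(z)| \geq \underline{c}_g R_{k+1}^{-\gamma}$. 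For $|t|$ large, let $k^*(t)$ denote the smallest index $k$ with $R_k \geq (\overline{c}_g|t|)^{1/\delta}$; then for every $k \geq k^*(t)$ one has $|tg(z)| \leq 1$ uniformly on $A_k$, so the elementary inequality $1 - \cos u \geq u^2/4$ valid for $|u|\leq 1$ gives
\[
\int_{A_k}\bigl(1 - \cos(tg(z))\bigr)\,\mathrm{d}z \;\geq\; \frac{t^2 \underline{c}_g^2}{4}\, R_{k+1}^{-2\gamma}\,\lambda_d(A_k).
\]
Since $\lambda_d(A_k) = \kappa_d R_k^d\bigl((1+\varepsilon)^d - 1\bigr)$ and $2\gamma > d$ (indeed $\gamma > 3d$), the resulting series $\sum_{k\geq k^*(t)} R_k^{d - 2\gamma}$ is geometric with ratio $(1+\varepsilon)^{d-2\gamma} < 1$ and thus summable to a constant multiple of $R_{k^*(t)}^{d-2\gamma}$. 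Using $R_{k^*(t)} \asymp |t|^{1/\delta}$, this produces $I(t) \geq c|t|^{2 + (d - 2\gamma)/\delta} = c|t|^\beta$ for $|t|$ sufficiently large.

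\textbf{Conclusion and main obstacle.} Consequently $|\varphi(t)| \leq \exp(-c|t|^\beta)$ with $\beta > 0$, so $\varphi \in L^1(\mathbb{R})$, and Fourier inversion supplies the continuous density
\[
\rho(y) = \frac{1}{2\pi}\int_{\mathbb{R}} e^{-ity}\varphi(t)\,\mathrm{d}t,
\qquad \|\rho\|_\infty \leq \frac{1}{2\pi}\int_{\mathbb{R}}|\varphi(t)|\,\mathrm{d}t < \infty.
\]
The delicate point is the interplay between the two exponents of Assumption \ref{assumption:g}: only the upper bound on $|g|$ controls the scale $|t|^{1/\delta}$ beyond which $|tg(z)|\leq 1$, while only the lower bound on $|g|$ provides a usable estimate on $(tg(z))^2$ in that regime. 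The hypothesis $\gamma < \delta + d/2$ is precisely what forces the growth exponent $\beta = 2 - (2\gamma - d)/\delta$ to be positive, so a gap condition of exactly this shape between $\gamma$ and $\delta$ is essential for the argument to go through.
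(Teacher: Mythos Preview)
Your proposal is correct and follows essentially the same approach as the paper: both arguments compute the characteristic function of $f_\eta(0)$ via the Poisson Laplace functional, reduce to a lower bound on $I(t)=\int(1-\cos(tg(z)))\,\mathrm{d}z$, use the inequality $1-\cos u\geq u^2/4$ on the region where $|tg(z)|$ is small (identified through the upper bound $|g(z)|\leq\overline{c}_g\|z\|^{-\delta}$), and then estimate $g(z)^2$ from below using the lower bound $|g(z)|\geq\underline{c}_g\|z\|^{-\gamma}$ to arrive at $I(t)\gtrsim |t|^{(2\delta+d-2\gamma)/\delta}$ and hence $\varphi\in L^1(\mathbb{R})$. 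The only difference is cosmetic: the paper integrates directly in spherical coordinates over $\{\|z\|\geq\max\{(t\overline{c}_g/c)^{1/\delta},c_g\}\}$, while you discretise the same radial integral into geometric annuli before summing.
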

	\begin{proof}
		We use the fact that $f_\eta(x)$ has a bounded density if its characteristic function $\varphi$ is integrable.
		By \cite[Chapter 1, Lemma 3.7]{BS07} the characteristic function of $f_\eta(x)$ is given by
		\begin{align*}
			\varphi(t)=\exp\left[-\int_{\mathbb{R}^d}1-e^{\mathbf{i}tg(x-y)}\;\mathrm{d}y\right],
		\end{align*}
		where $\mathbf{i}$ is the imaginary unit. Thus, $f_\eta(x)$ has a bounded density if 
		\begin{align*}
			\int_\mathbb{R}\lvert\varphi(t)\rvert\;\mathrm{d}t=\int_\mathbb{R}\Big\lvert\exp\Big[-\int_{\mathbb{R}^d} 1-e^{\mathbf{i}tg(x-y)} \;\mathrm{d}y\Big]\Big\rvert\;\mathrm{d}t<\infty.	
		\end{align*}
		Choose $c>0$ small enough such that $1-\cos(\hat{x})=\sum_{k=1}^{\infty}(-1)^{k+1}\frac{{\hat{x}}^{2k}}{(2k)!}\geq \frac{\hat{x}^2}{4}$ for $\hat{x}\in[-c,c]$. Then it holds
		
		\begin{align*}
			\int_{\mathbb{R}^d}1-\cos(tg(x-y))\;\mathrm{d}y
			&\geq\int_{\{z\in\mathbb{R}^d: t^2g(x-z)^2\leq c^2,\lVert x-z\rVert\geq c_g\}}\frac{(tg(x-y))^{2}}{4}\;\mathrm{d}y\\
			&\geq \int_{\{z\in\mathbb{R}^d: t^2\overline{c}_g^2\lVert x-z\rVert^{-2\delta}\leq c^2,\lVert x-z\rVert\geq c_g\}}\frac{t^2\underline{c}_g^2\lVert x-y\rVert^{-2\gamma}}{4}\;\mathrm{d}y\\
			&\geq \frac{d\kappa_dt^2\underline{c}_g^2}{4}\int_{\max\left\{\left(t\overline{c}_g/c\right)^{1/\delta},c_g\right\}}^\infty r^{-2\gamma}r^{d-1}\;\mathrm{d}r\\&=\frac{d\kappa_dt^2\underline{c}_g^2}{4(2\gamma-d)}\cdot\max\left\{\left(t\overline{c}_g/c\right)^{1/\delta},c_g\right\}^{(d-2\gamma)}
		\end{align*}
		and, therefore,
		\begin{align*}
			\int_\mathbb{R}\lvert\varphi(t)\rvert\;\mathrm{d}t&=\int_\mathbb{R}\Big\lvert\exp\Big[-\int_{\mathbb{R}^d}1-e^{\mathbf{i}tg(x-y)}\;\mathrm{d}y\Big]\Big\rvert\;\mathrm{d}t\\&=2	\int_{\mathbb{R}_+}\exp\Big[-\int_{\mathbb{R}^d}1-\cos(tg(x-y))\;\mathrm{d}y\Big]\;\mathrm{d}t\\	
			&\leq 2\int_{\mathbb{R}_+}\exp\left[-\frac{d\kappa_dt^2\underline{c}_g^2}{4(2\gamma-d)}\cdot\max\left\{\left(t\overline{c}_g/c\right)^{1/\delta},c_g\right\}^{(d-2\gamma)}\right]\;\mathrm{d}t\\
			&= 2 \int_{0}^{c_g^{\delta}c/{\overline{c}_g}}\exp[-c_{1,\gamma,\delta,d} t^{2}]\;\mathrm{d}t+ 2\int_{c_g^{\delta}c/{\overline{c}_g}}^\infty\exp[-c_{2,\gamma,\delta,d}t^{(2(\delta-\gamma)+d)/\delta}]\;\mathrm{d}t\\&<\infty
		\end{align*}
		with suitable constants $c_{1,\gamma,\delta,d},c_{2,\gamma,\delta,d}>0$ since $\delta-\gamma+d/2>0$. This shows that $f_x(\eta)$ has a bounded density.
	\end{proof}
		\begin{proof}[Proof of Theorem \ref{theorem:PSN process}]
			Since $g$ is continuous and $g(x)\to0$ as $\lVert x\rVert\to\infty$, there exists a ball $B^d(\hat{t},r)$ with centre $\hat{t}\in \mathbb{R}^d$ and radius $r>0$ such that $g(t)\in[c_1,c_2]$ for all $t\in B^d(\hat{t},r)$ with
			$0<c_1<c_2<g(0)$.
			For $z\in \mathbb{R}^d$ we shall consider $D_zF_s$. The following inequalities are independent from the choice of $z$. Let $\varepsilon\in(0,\min\{1,r\})$ be small enough such that $g(x-z)\geq g(0)-c_1$ for all $x\in B^d(z,\varepsilon)$.
			Then, for $y\in B^d(z-\hat{t},r-\varepsilon)$ and $x\in B^d(z,\varepsilon)$ it holds 
			\begin{align}\label{x-y in B(t,r)}
				\lVert x-y-\hat{t}\rVert\leq \lVert x-z-y-\hat{t}+z\rVert\leq\lVert x-z\rVert+ \lVert y-(z-\hat{t})\rVert\leq \varepsilon +r-\varepsilon=r
			\end{align}
			and therefore $x-y\in B^d(\hat{t},r)$.
			
			For a) define $B_R=B^d(z,R)$ for $R\geq c_g+\varepsilon$. Let $c_3\in(0,u)$.
			Then,  for $x\in B^d(z,\varepsilon)$ the Markov inequality and the Mecke equation lead to
			\begin{align*}
				\p\Bigg(\sum_{y\in\eta\cap B_R^c}\lvert g(x-y)\rvert\geq c_3\Bigg)&\leq \frac{1}{c_3}\mathbb{E}\Bigg[\sum_{y\in\eta\cap B_R^c}\lvert g(x-y)\rvert\Bigg]\\&\leq\frac{1}{c_3}\int_{\mathbb{R}^d\backslash B_R} \lvert g(x-y)\rvert\;\mathrm{d}y\\&\leq \frac{d\kappa_d}{c_3}\int_{R-\varepsilon}^\infty \overline{c}_gr^{-\delta}r^{d-1}\;\mathrm{d}r =\frac{\overline{c}_gd\kappa_d}{c_3(\delta-d)(R-\varepsilon)^{\delta-d}}.
			\end{align*}
			Now, choose $R\geq c_g+\varepsilon$ large enough such that $B^d(z-\hat{t},r-\varepsilon)\subset B_R=B^d(z,R)$ and $	\p\Big(\sum_{y\in\eta\cap B_R^c}\lvert g(x-y)\rvert\geq c_3\Big)\leq\frac{\overline{c}_gd\kappa_d}{c_3(\delta-d)(R-\varepsilon)^{\delta-d}}\leq\frac{1}{2}$.
			This implies
			\allowdisplaybreaks
			\begin{align*}
				&\p(	f_{\eta}(x)<u, f_{\eta\cup\{z\}}(x)\geq u \;\;\forall x\in B^d(z,\varepsilon))\\&=\p\Big(u-g(x-z)\leq\sum_{y\in\eta}g(x-y)<u \;\;\forall x\in B^d(z,\varepsilon)\Big)\\
				&\geq \p\Big(u-g(0)+c_1\leq\sum_{y\in\eta}g(x-y)<u \;\;\forall x\in B^d(z,\varepsilon)\Big)\\
				&\geq \p\Big(u-g(0)+c_1-\sum_{y\in\eta\cap B_R^c}g(x-y)\leq\sum_{y\in\eta\cap B_R}g(x-y),\\&\;\;\;\;\;\;\;\;\sum_{y\in\eta\cap B_R}g(x-y)<u-\sum_{y\in\eta\cap B_R^c}g(x-y) \;\;\forall x\in B^d(z,\varepsilon)\Big)\\
				&\geq \p\Big(u-g(0)+c_1+\sum_{y\in\eta\cap B_R^c}\lvert g(x-y)\rvert\leq\sum_{y\in\eta\cap B_R}g(x-y),\\&\;\;\;\;\;\;\;\;\sum_{y\in\eta\cap B_R}g(x-y)<u-\sum_{y\in\eta\cap B_R^c}\lvert g(x-y)\rvert \;\;\forall x\in B^d(z,\varepsilon)\Big)\\
				&\geq \frac{1}{2}\p\Big(u-g(0)+c_1+c_3\leq\sum_{y\in\eta\cap B_R}g(x-y)< u-c_3 \;\;\forall x\in B^d(z,\varepsilon)\Big),
			\end{align*}
		where the last inequality follows from the independence of $\eta\cap B_R$ and $\eta\cap B_R^c$.
			Now, choose $c_1$ close enough to $c_2$ and $c_1,c_2,c_3>0$ small enough such that 
			\begin{align*}
				u-c_3>\frac{c_2}{c_1}(u-g(0)+2c_1+c_3).
			\end{align*}
			Let $k\in\mathbb{N}_0$ be such that $\frac{u-g(0)+c_1+c_3}{c_1}\leq k< \frac{u-c_3}{c_2}$. Note that such a $k$ exists because $c_1,c_2$ and $c_3$ are chosen in such a way that $\frac{u-g(0)+c_1+c_3}{c_1}+1=\frac{u-g(0)+2c_1+c_3}{c_1}<\frac{u-c_3}{c_2}$ and $\frac{u-c_3}{c_2}>0$. Therefore, together with \eqref{x-y in B(t,r)},
			\begin{align*}
				&\p(	f_{\eta}(x)<u, f_{\eta\cup\{z\}}(x)\geq u \;\;\forall x\in B^d(z,\varepsilon))\\
				&\geq \frac{1}{2}\p\left(kc_1\leq\sum_{y\in\eta\cap B_R}g(x-y)\leq kc_2 \;\;\forall x\in B^d(z,\varepsilon)\right)\\
				&\geq \frac{1}{2}\p(\eta(B^d(z-\hat{t},r-\varepsilon))=k,\eta(B_R\backslash B^d(z-\hat{t},r-\varepsilon))=0)=:p_a>0.
			\end{align*}	

			For b) we define $\tilde{S}=\{y\in \mathbb{R}^d:x-y\in S\; \text{for some } x\in B^d(z,\varepsilon)\}$. Note that $B^d(z-\hat{t},r-\varepsilon)\subseteq\tilde{S}$ because $x-y\in B^d(\hat{t},r)\subseteq S$ for all $x\in B^d(z,\varepsilon)$ and $y\in B^d(z-\hat{t},r-\varepsilon)$. Then it follows
			\begin{align*}
				&\p(	f_{\eta}(x)<u, f_{\eta\cup\{z\}}(x)\geq u \;\;\forall x\in B^d(z,\varepsilon))\\&=\p\left(u-g(x-z)\leq\sum_{y\in\eta}g(x-y)<u\;\;\forall x\in B^d(z,\varepsilon)\right)\\
				&\geq \p\left(u-g(0)+c_1\leq\sum_{y\in\eta}g(x-y)<u\;\;\forall x\in B^d(z,\varepsilon)\right)\\
				&=\p\left(u-g(0)+c_1\leq\sum_{y\in\eta\cap \tilde{S}}g(x-y)< u\;\;\forall x\in B^d(z,\varepsilon)\right).
			\end{align*}
			Now, choose $c_1$ close enough to $c_2$ and $c_1,c_2>0$ small enough such that 	
			\begin{align*}
				u>\frac{c_2}{c_1}(u-g(0)+2c_1).
			\end{align*}
			Let $k\in\mathbb{N}_0$ be such that $\frac{u-g(0)+c_1}{c_1}\leq k<\frac{u}{c_2}$. Note that such a $k$ exists because $c_1$ and $c_2$ are chosen in such a way that $\frac{u-g(0)+c_1}{c_1}+1=\frac{u-g(0)+2c_1}{c_1}<\frac{u}{c_2}$ and $\frac{u}{c_2}>0$. Then,
			\begin{align*}
				&\p(	f_{\eta}(x)<u, f_{\eta\cup\{z\}}(x)\geq u \;\;\forall x\in B^d(z,\varepsilon))\\
				&\geq \p\left(kc_1\leq\sum_{y\in\eta\cap \tilde{S}}g(x-y)\leq kc_2\;\;\forall x\in B^d(z,\varepsilon)\right)\\
				&\geq \p(\eta( B^d(z-\hat{t},r-\varepsilon))=k,\eta(\tilde{S}\backslash  B^d(z-\hat{t},r-\varepsilon))=0)=:p_b>0.
			\end{align*}
			Altogether, for $A_s=\{z\in \mathbb{R}^d:B^d(z,\varepsilon)\subset B^d(0,s)\}$ and $p=p_a$ in case of a) or $p=p_b$ in case of b) we conclude that
			\begin{align*}
				\E\left[\int(D_zF_s)^2\;\mathrm{d}z\right]&\geq \kappa_{d}^2\varepsilon^{2d}\int_{\mathbb{R}^d}\p(D_zF_s\geq\kappa_d\varepsilon^d)\;\mathrm{d}z\\&\geq \kappa_{d}^2\varepsilon^{2d}\int_{A_s}\p(	f_{\eta}(x)<u, f_{\eta\cup\{z\}}(x)\geq u \;\;\forall x\in B^d(z,\varepsilon))\;\mathrm{d}z\\
				&\geq \kappa_{d}^2\varepsilon^{2d}\int_{A_s}p\;\mathrm{d}z\geq \kappa_{d}^2\varepsilon^{2d}p\lambda_d(A_s)\geq c_{d,\varepsilon} s^d
			\end{align*}
			for some constant $c_{d,\varepsilon}>0$.
			
			In the following we consider the second-order difference operator to check \eqref{condition}. For $z_1,z_2\in\mathbb{R}^d$ with $z_1\neq z_2$ we have
			$$
			D^2_{z_1,z_2}F_s = \int_{B^d(0,s)} D^2_{z_1,z_2}\mathbf{1}\{f_\eta(x)\geq u\} \;\mathrm{d}x
			$$
			so that
			\begin{equation}\label{eqn:D2_F_s}
			|D^2_{z_1,z_2}F_s| \leq 2 \lambda_d(B_s(z_1,z_2))
			\end{equation}
			with $B_s(z_1,z_2)=\{x\in B^d(0,s): D^2_{z_1,z_2}\mathbf{1}\{f_\eta(x)\geq u\}\neq 0 \}$, where we used the bound $|D^2_{z_1,z_2}\mathbf{1}\{f_\eta(x)\geq u\}|\leq 2$. The inequality \eqref{eqn:D2_F_s} leads to
			$$
			I:=\E\left[\int_{\mathbb{R}^d}\int_{\mathbb{R}^d}(D^2_{z_1,z_2}F_s)^2\;\mathrm{d}z_1\;\mathrm{d}z_2\right] \leq 4 \int_{\mathbb{R}^d}\int_{\mathbb{R}^d}\E\left[\lambda_d(B_s(z_1,z_2))^2\right]\;\mathrm{d}z_1\;\mathrm{d}z_2.
			$$
			
			First we study the situation of a). Let $x\in B^d(0,s)$ and assume that $|g(x-z_2)|\leq |g(x-z_1)|$. Since
			\begin{align*}
			D^2_{z_1,z_2}\mathbf{1}\{f_\eta(x)\geq u\} & = \mathbf{1}\{f_\eta(x)+g(x-z_1)+g(x-z_2)\geq u\} - \mathbf{1}\{f_\eta(x)+g(x-z_1)\geq u\} \\
			& \quad - ( \mathbf{1}\{f_\eta(x)+g(x-z_2)\geq u\} - \mathbf{1}\{f_\eta(x)\geq u\} ),
			\end{align*}
			we obtain that
			$$
			D^2_{z_1,z_2}\mathbf{1}\{f_\eta(x)\geq u\} = 0
			$$
			if
			$$
			f_\eta(x)+g(x-z_1) \notin [u-|g(x-z_2)|,u+|g(x-z_2)|]
			$$
			and
			$$
			f_\eta(x) \notin [u-|g(x-z_2)|,u+|g(x-z_2)|].
			$$
			Together with the fact that the density of $f_\eta(x)$ is bounded by a constant $C_1>0$, which was shown in Lemma \ref{lemma_density}, we derive
			\begin{align*}
			\p(x\in B_s(z_1,z_2)) & \leq \p( f_\eta(x)+g(x-z_1) \in [u-|g(x-z_2)|,u+|g(x-z_2)|] ) \\
			& \quad + \p(f_\eta(x) \in [u-|g(x-z_2)|,u+|g(x-z_2)|]) \\
			& \leq 4 C_1 |g(x-z_2)|.
			\end{align*}
			Using the same arguments for $|g(x-z_2)|\geq |g(x-z_1)|$, we deduce
			$$
			\p(x\in B_s(z_1,z_2)) \leq 4 C_1 \min\{|g(x-z_1)|,|g(x-z_2)|\}
			$$
			so that with H\"older's inequality and the inequality $\min\{a,b\}\leq \sqrt{a}\sqrt{b}$ for $a,b\geq 0$,
			\begin{align*}
				& \E\left[\lambda_d(B_s(z_1,z_2))^2\right] \\
				&=\int_{B^d(0,s)}\int_{B^d(0,s)}\p(x_1\in B_s(z_1,z_2),x_2\in B_s(z_1,z_2))\;\mathrm{d}x_1\;\mathrm{d}x_2\\&\leq \int_{B^d(0,s)}\int_{B^d(0,s)} \p(x_1\in B_s(z_1,z_2))^{2/3}\p(x_2\in B_s(z_1,z_2))^{1/3}\;\mathrm{d}x_1\;\mathrm{d}x_2\\
				&\leq 4C_1 \int_{B^d(0,s)}\int_{B^d(0,s)}\lvert g(x_1-z_1)\rvert^{1/3}\lvert g(x_1-z_2)\rvert^{1/3}\lvert g(x_2-z_1)\rvert^{1/3}\;\mathrm{d}x_1\;\mathrm{d}x_2.
			\end{align*}
			From Assumption \ref{assumption:g} and the continuity of $g$ it follows that $g$ is bounded by a constant $C_2>0$. Using the decay of $|g|$ and $\delta>3d$ in Assumption \ref{assumption:g}, we have for $x\in B^d(0,s)$ that
			\begin{align*}
			\int_{\mathbb{R}^d} \lvert g(x-z)\rvert^{1/3}\;\mathrm{d}z&=	\int_{\mathbb{R}^d\backslash B^d(x,c_g)} \lvert g(x-z)\rvert^{1/3}\;\mathrm{d}z+\int_{B^d(x,c_g)} \lvert g(x-z)\rvert^{1/3}\;\mathrm{d}z\\
			&\leq \int_{\mathbb{R}^d\backslash B^d(x,c_g)} \overline{c}_g^{1/3}\lVert x-z\rVert^{-\delta/3}\;\mathrm{d}z+C_2^{1/3}\kappa_{d}c_g^d\\
			&= d\kappa_{d}\overline{c}_g^{1/3}\int_{c_g}^\infty r^{d-1}r^{-\delta/3}\;\mathrm{d}r+C_2^{1/3}\kappa_{d}c_g^d\\
			&=d\kappa_{d}\overline{c}_g^{1/3}\frac{c_g^{d-\delta/3}}{\delta/3-d}+C_2^{1/3}\kappa_{d}c_g^d=:C_3.
			\end{align*}
						The same estimate holds for $\int_{B^d(0,s)} \lvert g(x-z)\rvert^{1/3}\;\mathrm{d}x$ for $z\in\mathbb{R}^d$. Hence,
			\begin{align*}
			I&\leq \int_{\mathbb{R}^d}\int_{\mathbb{R}^d}16 C_1 \int_{B^d(0,s)}\int_{B^d(0,s)}\lvert g(x_1-z_1)\rvert^{1/3}\lvert g(x_1-z_2)\rvert^{1/3}\\&\hspace{6.5cm}\times\lvert g(x_2-z_1)\rvert^{1/3}\;\mathrm{d}x_1\;\mathrm{d}x_2\;\mathrm{d}z_1\;\mathrm{d}z_2\\
				&=16 C_1 \int_{B^d(0,s)}\int_{\mathbb{R}^d}\lvert g(x_1-z_1)\rvert^{1/3}\int_{B^d(0,s)}\lvert g(x_2-z_1)\rvert^{1/3}\\&\hspace{6.5cm}\times\int_{\mathbb{R}^d}\lvert g(x_1-z_2)\rvert^{1/3} 
				\;\mathrm{d}z_2\;\mathrm{d}x_2\;\mathrm{d}z_1\;\mathrm{d}x_1\\
				&\leq 16 C_1 \int_{B^d(0,s)}C_3^3\;\mathrm{d}x_1=: \tilde{c}_1s^d.
			\end{align*}
			
			For b) let $\widetilde{R}>0$ be such that $S\subseteq B^d(0,\widetilde{R})$ and let $z_1,z_2\in\mathbb{R}^d$. Then, since
				\begin{align*}
					B_s(z_1,z_2) \subseteq \{x\in B^d(0,s):\lVert x-z_1\rVert\leq \widetilde{R},\lVert x-z_2\rVert\leq \widetilde{R}\},
				\end{align*}
			it follows
			\begin{align*}
			\E\left[\lambda_d(B_s(z_1,z_2))^2\right] \leq \lambda_d(\{x\in B^d(0,s):\lVert x-z_1\rVert\leq \widetilde{R},\lVert x-z_2\lVert\leq \widetilde{R}\})^2.
			\end{align*}
			The triangle inequality implies $\lambda_d(\{x\in B^d(0,s):\lVert x-z_1\rVert\leq \widetilde{R},\lVert x-z_2\lVert\leq \widetilde{R}\})=0$ for $\lVert z_1-z_2\rVert>2\widetilde{R}$ or $\lVert z_2\rVert> s+\widetilde{R}$ and therefore
			\begin{align*}
			I &\leq 4 \int_{B^d(0,s+\widetilde{R})}\int_{B^d(z_2,2\widetilde{R})} \lambda_d(\{x\in B^d(0,s):\lVert x-z_1\rVert\leq \widetilde{R},\lVert x-z_2\lVert\leq \widetilde{R}\})^2\;\mathrm{d}z_1\;\mathrm{d}z_2\\&\leq 4\int_{B^d(0,s+\widetilde{R})}\int_{B^d(z_2,2\widetilde{R})} (\kappa_d\widetilde{R}^d)^2\;\mathrm{d}z_1\;\mathrm{d}z_2\leq 4(\kappa_d\widetilde{R}^d)^2\kappa_d^2(2\widetilde{R})^d(s+\widetilde{R})^d\\&\leq Cs^d
			\end{align*}
		for a suitable constant $C>0$, which completes together with Theorem \ref{thm:varbound} the proof.
		\end{proof}
		
\appendix		
		
	\section{Appendix on stabilising functionals}
	\label{appendix:stabilising_functionals}
	
In this appendix we recall the framework of stabilising functionals considered in \cite{LSY19,SY21}. For further works on stabilisation in stochastic geometry we refer the reader to e.g.\ \cite{BY05,LPY20,P07,PW08,PY01,PY05} and the references therein. Let $(\mathbb{X},\mathscr{F}_\mathbb{X})$ be a measurable space with a $\sigma$-finite measure $\hat{\lambda}$ and a measurable semi-metric ${\rm d}$. We denote by $B(x,r)$ the ball of radius $r$ with respect to ${\rm d}$ around $x\in\mathbb{X}$ and assume that there exist constants $\kappa,\gamma>0$ such that
	\begin{align}
		\label{gamma}
		\limsup\limits_{\varepsilon\to0}\frac{\hat{\lambda}(B(x,r+\varepsilon))-\hat{\lambda}(B(x,r))}{\varepsilon}\leq \kappa\gamma r^{\gamma-1}
	\end{align}
for all $r\geq 0$ and $x\in\mathbb{X}$. Obviously this assumption is satisfied if $\mathbb{X}$ is $\mathbb{R}^d$ or a subset of $\mathbb{R}^d$ equipped with the usual Euclidean norm and $\hat{\lambda}$	has a bounded density with respect to the Lebesgue measure.

For $s\geq 1$ let $\eta_s$ be a Poisson process with intensity measure $s\hat{\lambda}$. We consider a Poisson functional $F_s$, i.e.\ a random variable that depends on the Poisson process $\eta_s$. In many applications $F_s$ can be written as a sum of scores $\xi_s$, i.e.\ 
	\begin{align}\label{eqn:sum_scores}
		F_s=F_s(\eta_s)=\sum_{x\in\eta_s}\xi_s(x,\eta_s).
	\end{align}
Here, one can think of $F_s$ as the sum of contributions associated with the points of $\eta_s$. In the sequel, we assume that the scores are stabilising. Here the idea is that the score of a point $x$ only depends on the points of $\eta_s$ in a random neighbourhood of $x$.
	
	In order to show the condition \eqref{condition} for random variables of the form \eqref{eqn:sum_scores}, one can often use properties of the score functions. The following definitions were taken from \cite{LSY19,SY21}.
	We start with defining the radius of stabilisation.
	Let $s\geq 1$. A measurable map $R_s:\mathbb{X}\times \mathbf{N}\to\mathbb{R}$ is called radius of stabilisation for $\xi_s$ if
		\begin{align*}
			\xi_s(x,(\nu\cup\{x\}\cup A)\cap B^d(x,R_s(x,\nu\cup\{x\})))=\xi_s(x,\nu \cup\{x\}\cup A)
		\end{align*}
		for all $x\in\mathbb{X}$, $\nu\in\mathbf{N}$ and $A\subset\mathbb{X}$ with $\lvert A\rvert\leq 9$. Broadly speaking, this says that the value of the score only depends on the points of the underlying point configuration with distance at most $R_s(x,\nu\cup\{x\})$ from $x$.
		Using this radius of stabilisation, one can define exponential stabilisation.
		The scores $(\xi_s)_{s\geq 1}$ are called exponentially stabilising if there exist radii of stabilisation and constants $C_{stab},c_{stab},\alpha_{stab}>0$ such that
		\begin{align*}
			\p(R_s(x,\eta_s\cup\{x\})\geq r)\leq C_{stab}\exp[-c_{stab}(s^{1/\gamma}r)^{\alpha_{stab}}]
		\end{align*}
		for $x\in\mathbb{X}$, $r\geq 0$, $s\geq 1$ and $\gamma$ from \eqref{gamma}.
	For $q>0$, the scores $(\xi_s)_{s\geq 1}$ fulfil a $q$-th moment condition if there exists a constant $C_q>0$ satisfying
		\begin{align*}
			\sup\limits_{s\geq 1}\sup\limits_{x\in\mathbb{X}}\E\lvert\xi_s(x,\eta_s\cup\{x\}\cup A )\rvert^q\leq C_q
		\end{align*}
		for $A\subset\mathbb{X}$ with $\lvert A\rvert\leq 9$.
	Finally, the scores $(\xi_s)_{s\geq 1}$ decay exponentially fast with distance to a measurable set $K\subseteq\mathbb{X}$ if there are constants $C_K,c_K,\alpha_K>0$ such that for $x\in\mathbb{X}$, $r\geq 0$, $s\geq 1$ and $A\subset\mathbb{X}$ with $\lvert A\rvert\leq 9$,
	\begin{align*}
		\p(\xi_s(x,\eta_s\cup\{x\}\cup A)\neq 0)\leq C_K\exp[-c_Ks^{\alpha_K/\gamma}{\rm d}(x,K)^{\alpha_K}],
	\end{align*}
	where ${\rm d}(x,K)$ denotes the distance from $x$ to $K$ with respect to the semi-metric ${\rm d}$ and $\gamma$ is from \eqref{gamma}. In contrast to the definitions in \cite{LSY19}, those in \cite{SY21} and in this appendix \ref{appendix:stabilising_functionals} require that one can add up to nine additional points instead of seven, but this difference is not essential and all results from \cite{LSY19} we refer to throughout this paper are still valid. For more details on stabilising functionals we refer to \cite{LSY19} or \cite{SY21} and the references therein.

\end{document}